\numberwithin{equation}{section}
\newtheorem{theorem}{Theorem}[section]
\newtheorem{proposition}[theorem]{Proposition}
\newtheorem{lemma}[theorem]{Lemma}
\newtheorem{corollary}[theorem]{Corollary}
\theoremstyle{definition}
\theoremstyle{remark}
\numberwithin{equation}{section}
\newcommand{\Z}{\mathbb{Z}}
\newcommand{\Q}{\mathbb{Q}}
\newcommand{\Hom}{\mathrm{Hom}}
\newcommand{\map}{\mathrm{map}}
\title[The space of commuting elements and classifying spaces]{The space of commuting elements in an exceptional Lie group and maps between classifying spaces}
\author[M. Takeda]{Masahiro Takeda}
\address{Institute for Liberal Arts and Sciences, Kyoto University, Kyoto, 606-8316, Japan}
\email{takeda.masahiro.87u@kyoto-u.ac.jp}
\date{\today}
\subjclass[2020]{55R37, 57T10}
\keywords{space of commuting elements, Lie group, classifying space}
\begin{document}

\maketitle

\begin{abstract}
  Let $\pi$ be a discrete group, and let $G$ be a compact connected Lie group.
  $\Hom(\pi,G)_0$ denotes the null-component of the space of homomorphisms from $\pi$ to $G$, and $\map_*(B\pi,BG)_0$ denotes the null-component of the space of maps from $B\pi$ to $BG$.
  Since the classifying space functor is continuous, there is a continuous map $\Theta\colon\Hom(\pi,G)_0\to\map_*(B\pi,BG)_0$. 
  Atiyah and Bott studied this map when $\pi$ is a surface group, and proved surjectivity in rational cohomology. 
  In this paper, we obtain the condition that the map $\Theta$ is surjective or not in rational cohomology when $\pi$ is $\Z^m$ for $m\geq 3$ and $G$ is a compact connected Lie group. 
\end{abstract}


\section{Introduction}\label{sec:introduction}

For two pointed spaces $X,Y$, let $\map_*(X,Y)$ denotes the space of based maps from $X$ to $Y$.
For two topological groups $H,G$, let $\Hom(H,G)$ be the space of homomorphisms from $H$ to $G$.
Since the classifying space functor is continuous, there is a continuous map
\[
  \widehat{\Theta}\colon \Hom(H,G)\rightarrow \map_*(BH,BG).
\]
Let $\Hom(H,G)_0, \map_*(BH,BG)_0$ be the null-component of $\Hom(H,G), \map_*(BH,BG)$ respectively.
Since the trivial homomorphism in $\Hom(H,G)_0$ is sent to the constant map to the base point by $\widehat{\Theta}$, there is a following map as a restriction of $\widehat{\Theta}$
\[
  \Theta\colon \Hom(H,G)_0\rightarrow \map_*(BH,BG)_0.
\]

These maps $\widehat{\Theta},\Theta$ are of interest from several points of view.
Here we will show two points of view.
The first relates to $A_\infty$-maps.
The $A_\infty$-map is introduced by Stasheff \cite{St}.
$A_\infty$-map is like a homomorphism between topological monoids replacing the strict associativity by the associativity up to coherent higher homotopies.
For two topological groups $H,G$, let $A_\infty(H,G)$ denotes the space of $A_\infty$-maps from $H$ to $G$.
It is proved in \cite{F,T}, there is a weak homotopy equivalence
\[
  A_\infty(H,G)\rightarrow \map_*(BH,BG).
\]
Since $\widehat{\Theta}$ factors this weak homotopy equivalence, $\widehat{\Theta}$ can be interpreted to the relation between homomorphism, which has strict associativity, and $A_\infty$-map, which has homotopical associativity.
The second relates to the bundle theory. 
When $H$ is a discrete group and $BH$ has the homotopy type of a manifold, $\Hom(H,G)_0$ is identified with the based moduli space of flat connections of trivial $G$-bundle over $BH$ and $\map_*(BH,BG)_0$ is identified with the based moduli space of all connections.
Thus $\Theta$ can be interpreted to the relation between based moduli spaces of flat connections and all connections.
When $H$ is a surface group, Atiyah and Bott \cite{AB} studied this map $\Theta$ by using Morse theory, and in particular they proved that $\Theta$ is surjective in rational cohomology.

After here, let $G$ be a compact connected Lie group.
In this paper we will consider the following map
\[
  \Theta\colon \Hom(B\Z^m,BG)_0\rightarrow \map_*(B\Z^m,BG)_0,
\]
for $m\geq 3$.
Now since there is a natural homeomorphism
\[
  \Hom(B\Z^m,BG)\cong \{(g_1,\dots g_m)\in G^m\mid g_ig_j=g_jg_i \text{ for }1\leq i,j \leq m\},
\]
$\Hom(\Z^m,G)_0$ is called the space of commuting elements in $G$.
Recently, the space of commuting elements in $G$ is studied in algebraic topology, for example \cite{ACT-G,AGG,Ba,BJS,BR,C,GPS,KT1,KT2,KTT,RS1,RS2}.

We will state the main theorem.
For a compact connected Lie group $G$, there exist simple Lie groups $K_1,K_2,\dots K_n$ such that $K_1\times K_2\times \dots \times K_n\times (S^1)^i$ for some $i$ is the finite cover of $G$.
Such a Lie groups $K_1,K_2,\dots K_k$ is called simple factors of $G$.

\begin{theorem}\label{main1}
    Let $G$ be a compact connected Lie group with simple factors $K_1,K_2,\dots K_k$, and $m\geq 3$.
    Then the map 
    \[
        \Theta\colon \Hom(\Z^m,G)_0\rightarrow \map_*(B\Z^m,BG)_0
    \]
    induces surjection in rational cohomology if and only if $K_i$ is equal to $SU(n)$, $Sp(n)$, $Spin(2n+1)$ or $G_2$ for each $i$.
\end{theorem}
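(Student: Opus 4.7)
The plan is to compute both the source and target of $\Theta$ in rational cohomology explicitly, identify the induced map, and then determine when it is surjective. First I would reduce to the case of a simply connected simple group $G$ via the product formulas $\Hom(\Z^m,A\times B)_0\cong\Hom(\Z^m,A)_0\times\Hom(\Z^m,B)_0$ and $\map_*(B\Z^m,B(A\times B))_0\cong\map_*(B\Z^m,BA)_0\times\map_*(B\Z^m,BB)_0$, which split the question across simple factors. A torus factor contributes a rational equivalence, and a finite cover $\widetilde{G}\to G$ preserves the rational cohomology of $BG$, so the surjectivity question in rational cohomology descends to each simply connected simple factor separately.

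For the target, I would use the rational formality of $BG$: writing $H^*(BG;\Q)=\Q[x_1,\dots,x_r]$ with $|x_i|=2d_i$, one has $BG\simeq_\Q\prod_i K(\Q,2d_i)$, whence
\[
\map_*(B\Z^m,BG)_0 \simeq_\Q \prod_i \map_*(T^m,K(\Q,2d_i))_0,
\]
and the rational cohomology of $\map_*(T^m,K(\Q,2d))_0$ is a free graded-commutative algebra on generators $x_i/\alpha$ of degree $2d_i-|\alpha|$, indexed by pairs of polynomial generators $x_i$ and nonzero classes $\alpha\in\tilde H_*(T^m;\Q)$. For the source, I would invoke Baird's formula
\[
H^*(\Hom(\Z^m,G)_0;\Q)\cong\bigl(H^*(G/T;\Q)\otimes H^*(T^m;\Q)\bigr)^W,
\]
valid for simply connected $G$. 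To identify $\Theta^*$, the composite of the conjugation map $G/T\times T^m\to\Hom(\Z^m,G)_0$ with the adjoint of $\Theta$ produces a map $G/T\times T^m\times T^m\to BG$, and pulling back each polynomial generator $x_i$ along this composite and taking the slant product with $\alpha$ realises $\Theta^*(x_i/\alpha)$ as a specific $W$-invariant element of $H^*(G/T)\otimes H^*(T^m)$. Surjectivity of $\Theta^*$ then becomes the question of whether these slant-product classes generate the full $W$-invariant algebra.

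The main obstacle, and the crux of the proof, is this generation question. For $SU(n)$, $Sp(n)$, $Spin(2n+1)$, and $G_2$, a direct invariant-theoretic computation---using Chevalley's theorem together with explicit knowledge of the Weyl group's action on $T^m$ and on the coinvariant algebra $H^*(G/T)$---should show that the slant-product classes generate the full invariant algebra. For each excluded simple factor I would produce an explicit $W$-invariant class not lying in the image of $\Theta^*$: for $Spin(2n)$ with $n\geq 4$ the obstruction is naturally associated with the Euler class, while for the exceptional groups $F_4,E_6,E_7,E_8$ it should arise from polynomial generators of $H^*(BG;\Q)$ in degrees whose slant products cannot produce the required $W$-invariants. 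This case-by-case invariant-theoretic analysis is where I expect the bulk of the work to lie.
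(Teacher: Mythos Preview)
Your overall strategy matches the paper's: reduce to simple factors, describe the target via formality of $BG$, describe the source via Baird's isomorphism $H^*(\Hom(\Z^m,G)_0;\Q)\cong(H^*(G/T)\otimes H^*(T^m))^W$, and identify $\Theta^*$ with slant-product classes. The reduction and setup (your first two paragraphs) are exactly what the paper does in Sections~2--3.

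The tactics diverge on the exceptional cases. For $F_4,E_6,E_7,E_8$ you propose exhibiting an explicit $W$-invariant not in the image. The paper instead uses a dimension count: it introduces a bigrading on both sides, shows $\Theta^*$ preserves it, proves the image factors through a sub-bigraded algebra $\mathcal{H}(G,3)$, and then computes (with computer assistance) the full Hilbert--Poincar\'e series of $\Hom(\Z^3,G)_0$ from the Weyl-group formula $\frac{1}{|W|}\prod(1-s^{d_i})\sum_{w}\frac{\det(1+tw)^m}{\det(1-s^2w)}$. For each of the four groups a single coefficient is found where $\dim H^*(\Hom(\Z^3,G)_0)$ exceeds $\dim\mathcal{H}(G,3)$ (e.g.\ bidegree $(6,3)$ for $E_7$: $2>1$). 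This sidesteps the need to name an explicit obstruction class, which for $E_8$ would be formidable. Your approach is not wrong in principle, but the paper's is considerably more mechanical.

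The more serious underestimate is $G_2$. You write that ``a direct invariant-theoretic computation\dots should show that the slant-product classes generate,'' but this is precisely where the bulk of the paper's work lies (two full sections). The paper first reduces $\mathcal{K}(m)^{D_6}$ to $\mathcal{K}(m)^{\mathfrak{S}_3}$, finds a generating set there, then must show that a large finite list of quadratic monomials in the $\mathfrak{S}_3$-generators all lie in the subalgebra generated by the slant-product classes. This requires a filtration argument, Hilbert--Poincar\'e series computations for $\mathcal{K}(m)^{D_6}/F_{m-1}$ for $m\le 6$, and roughly a dozen explicit linear-independence verifications in specific bidegrees. The $G_2$ case is not a routine application of Chevalley's theorem; it is the technical heart of the paper.
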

It is shown in Section \ref{sec:reduction} we can reduce the main theorem to the case that $G$ is a simple Lie group, and we have obtained such a surjectivity for classical group $G$ in \cite{KTT}.
Thus in this paper we will consider the exceptional Lie group cases.

In Section \ref{sec:reduction}, we review the previous result and we reduce the main theorem to the case that $G$ is a simple Lie group.
In Section \ref{sec:cohomology}, we will show some properties about rational cohomology of $\Hom(\Z^m,G)_0$ and $\map_*(B\Z^m,BG)_0$.
In Section \ref{sec:exceptional_cases}, we prove the main theorem for the case of exceptional Lie groups other than $G_2$ by using computation of Hilbert-Poincar\'e series in Section \ref{sec:computation_of_Poincare}.
In Section \ref{sec:G_2_case} and \ref{sec:G_2_generator}, we prove the main theorem for the case of $G_2$.

Unless otherwise stated, the coefficient of cohomology is $\Q$.

\subsection{Acknowledgements}
The author was supported by JSPS KAKENHI JP24KJ1758.


\section{Reduction to the simple Lie group case}\label{sec:reduction}

In this section we review the previous result and we reduce the main theorem to the case that $G$ is a simple Lie group.
At first, we remind the computation of $H^*(\Hom(\Z^m,G)_0)$ given by Baird \cite{Ba}.
Let $T$ be a maximal torus of $G$ and $W$ be the Weyl group of $G$.
We define a map
\[
  \Phi\colon G/T\times T^m\rightarrow \Hom(\Z^m,G)_0
\]
by $\Phi(gT,(t_1,\dots t_m))=(gt_1g^{-1},\dots gt_mg^{-1})$ for $g\in G$ and $t_1,\dots ,t_m\in T$.
Now $T^m$ is the direct product of the $m$-copies of the maximal torus $T$.
The $G$ action on $G/T\times T^m$ is defined as 
\[
  w\cdot (gT,a_1,\dots ,a_m)=(gwT,w^{-1}a_1w,\dots , w^{-1}a_mw).
\]
Then the rational cohomology of $\Hom(\Z^m,G)_0$ is computed as follows.
\begin{theorem}[Theorem 4.3 in \cite{Ba}]\label{Thm_Baird}
  The map 
  \[
    \Phi^*\colon H^*(\Hom(\Z^m,G)_0) \rightarrow H^*(G/T\times T^m)
  \]
  is an injection in rational cohomology, and the image of $\Phi^*$ is the invariant subring
  \[
    \mathrm{Im}(\Phi^*)=H^*(G/T\times T^m)^W.
  \]
\end{theorem}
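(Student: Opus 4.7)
The plan is to prove both parts simultaneously by factoring $\Phi$ through the quotient $(G/T\times T^m)/W$ and showing that the induced map $\bar\Phi\colon (G/T\times T^m)/W\to\Hom(\Z^m,G)_0$ is an isomorphism on rational cohomology. Granted this, the classical identity $H^*(X/W;\Q)\cong H^*(X;\Q)^W$ for a finite group $W$ acting on a reasonable space $X$ immediately gives the injectivity of $\Phi^*$ together with the identification of its image with $H^*(G/T\times T^m;\Q)^W$.

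First I would verify that $\Phi$ is $W$-invariant by a direct computation using that $T$ is abelian: lifting $w\in W$ to $n\in N(T)$, the point $(gnT,n^{-1}t_1n,\dots,n^{-1}t_mn)$ is sent by $\Phi$ to $(gt_1g^{-1},\dots,gt_mg^{-1})$, the same as $\Phi(gT,t_1,\dots,t_m)$. This produces the factorization through $\bar\Phi$. Surjectivity of $\Phi$ onto $\Hom(\Z^m,G)_0$ then follows from the fact that every commuting $m$-tuple in the identity component can be simultaneously conjugated into some maximal torus of $G$, combined with conjugacy of maximal tori. To study $\bar\Phi^*$ I would pass to the conjugation $G$-equivariant setting, where $G$ acts on $G/T\times T^m$ by left multiplication on $G/T$ and trivially on $T^m$, so that $\Phi$ is $G$-equivariant. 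The Borel construction of the source collapses as
\[
EG\times_G(G/T\times T^m)\simeq ET\times_T T^m\simeq BT\times T^m,
\]
and one compares the Serre spectral sequences of the two fibrations over $BG$, both of whose $E_2$-pages are modules over $H^*(BG;\Q)\cong H^*(BT;\Q)^W$. On the regular locus, where the $W$-action is free and $\bar\Phi$ restricts to a homeomorphism onto an open subset, the two spectral sequences manifestly agree.

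The main obstacle is extending this identification across the non-regular locus. Over a non-regular commuting tuple $(g_1,\dots,g_m)$ the fiber of $\bar\Phi$ is the quotient of $Z_G(g_1,\dots,g_m)/T$ by the stabilizer in $W$, strictly larger than a single $W$-orbit, so $\bar\Phi$ is not a bijection. One must show that these excess fibers contribute nothing to rational cohomology; I would handle this either by an equivariant-localization argument \`a la Borel, using that $H^*_G(\Hom(\Z^m,G)_0;\Q)$ is controlled by its restriction to $T^m$, or by an inductive stratification by centralizer type, exploiting that each $Z_G(g_1,\dots,g_m)$ is a connected reductive subgroup of maximal rank whose cohomology is again governed by $W$-invariant data. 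This is the technical heart of the argument; once it is established, the equality $\mathrm{Im}(\Phi^*)=H^*(G/T\times T^m)^W$ drops out by combining $\bar\Phi^*\cong\mathrm{id}$ with the transfer description of $H^*((G/T\times T^m)/W;\Q)$.
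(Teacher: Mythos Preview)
The paper does not give its own proof of this statement: it is quoted verbatim as Theorem~4.3 of Baird~\cite{Ba} and used as a black box. So there is no in-paper argument to compare your proposal against.

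That said, your sketch is broadly in the spirit of Baird's original proof. Baird does work $G$-equivariantly and reduces to showing that the inclusion $T^m\hookrightarrow\Hom(\Z^m,G)_0$ induces an isomorphism $H^*_G(\Hom(\Z^m,G)_0;\Q)\to H^*_G(T^m;\Q)\cong H^*_T(T^m;\Q)^W$, then uses that both sides are equivariantly formal (the fibration $G/T\times T^m\to\Hom(\Z^m,G)_0$ has rationally acyclic fibers in the Leray sense, after passing to $W$-invariants). Your identification $EG\times_G(G/T\times T^m)\simeq BT\times T^m$ and the comparison of Serre spectral sequences over $BG$ is exactly the right setup.

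Two points to sharpen. First, the claim that every commuting $m$-tuple in the identity component is simultaneously conjugate into $T$ is itself a nontrivial input; Baird deduces it from the description of $\Hom(\Z^m,G)_0$ as the image of $G\times T^m$ under conjugation (which is how the path component of the trivial homomorphism is characterized), so you should be explicit that this is what ``identity component'' buys you. Second, your proposed handling of the non-regular locus by stratification or localization is the genuine content, and you have left it as a plan rather than an argument. Baird's route is cleaner than a centralizer-type stratification: he shows directly that $H^*_G(\Hom(\Z^m,G)_0;\Q)$ is a free $H^*(BG;\Q)$-module (equivariant formality), which forces the restriction to $H^*_T(T^m;\Q)^W$ to be an isomorphism by a rank count. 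If you pursue your sketch, aim for that formality statement rather than a stratum-by-stratum analysis.
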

This proposition says that $H^*(\Hom(\Z^m,G)_0)$ is isomorphic to $H^*(G/T\times T^m)^W$.

We look at three lemmas.
\begin{lemma}[ref. Lemma 4.2 in \cite{KT1}]\label{Hom_surjective}
  Let $G$ and $H$ are compact connected Lie groups.
  If there is a finite covering homomorphism $H\rightarrow G$, then the map induced by the covering 
  \[
    \Hom(\Z^m,H)_0\rightarrow \Hom(\Z^m,G)_0
  \] 
  is an isomorphism in rational cohomology.
\end{lemma}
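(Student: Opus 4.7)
The plan is to use Baird's theorem (Theorem~\ref{Thm_Baird}) to transfer the question from $\Hom(\Z^m,-)_0$ to the $W$-invariant subring of $H^*(G/T\times T^m;\Q)$, where the effect of a finite covering becomes transparent.

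First I would set up the standard structure theory. Let $A=\ker(H\to G)$, a finite central subgroup of $H$. Since the center of a compact connected Lie group lies in every maximal torus, $A$ is contained in any maximal torus $T_H\subset H$, and $T_G:=T_H/A$ is a maximal torus of $G$. The preimage of $T_G$ under $H\to G$ equals $T_H$, so the induced map $H/T_H\to G/T_G$ is a homeomorphism. Moreover $N_G(T_G)=N_H(T_H)/A$, whence $W_G=N_H(T_H)/T_H=W_H$, and this identification is compatible with the natural map between $H/T_H\times T_H^m$ and $G/T_G\times T_G^m$.

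Next I would form the commutative square
\[
\begin{array}{ccc}
H/T_H\times T_H^m & \xrightarrow{\Phi_H} & \Hom(\Z^m,H)_0 \\
\downarrow f & & \downarrow \\
G/T_G\times T_G^m & \xrightarrow{\Phi_G} & \Hom(\Z^m,G)_0
\end{array}
\]
in which $f$ is the product of the homeomorphism $H/T_H\cong G/T_G$ with the finite covering $T_H^m\to T_G^m$, and the right vertical map is induced by the covering $H\to G$. The covering $T_H^m\to T_G^m$ is regular with deck group $A^m$ acting by translations, hence acting trivially on $H^*(T_H^m;\Q)$, so $f^*$ is an isomorphism in rational cohomology. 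Because $f$ intertwines the Weyl actions under the identification $W_H=W_G$, the map $f^*$ restricts to an isomorphism between the respective $W$-invariant subrings.

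Finally, by Theorem~\ref{Thm_Baird}, $\Phi_H^*$ and $\Phi_G^*$ are injections with images $H^*(H/T_H\times T_H^m)^W$ and $H^*(G/T_G\times T_G^m)^W$, respectively. Combined with the commutativity of the square and the isomorphism on invariants just established, this forces the induced map $\Hom(\Z^m,H)_0\to\Hom(\Z^m,G)_0$ to be an isomorphism in rational cohomology. The main point to verify carefully is the identification of Weyl groups together with the compatibility of their actions on the two copies of $G/T\times T^m$; everything else is then formal. This compatibility is the only nontrivial input, but it is routine from the fact that a finite covering homomorphism of compact connected Lie groups descends to the identification of normalizers, maximal tori, and Weyl groups modulo the central kernel.
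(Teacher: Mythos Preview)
Your proof is correct and follows exactly the approach the paper indicates: the paper simply remarks that the lemma follows from Theorem~\ref{Thm_Baird} because a finite covering between Lie groups preserves the Weyl group, and you have fleshed out precisely this argument in full detail.
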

This lemma can be proved by Theorem \ref{Thm_Baird}, because a finite covering between Lie groups preserves the Weyl group.
\begin{lemma}\label{map_surjective}
  Let $G$ and $H$ are compact connected Lie groups.
  If there is a finite covering homomorphism $H\rightarrow G$, then the maps induced by the covering 
  \[
    \map_*(B\Z^m,BH)_0\rightarrow \map_*(B\Z^m,BG)_0
  \] 
  is isomorphism in rational cohomology.
\end{lemma}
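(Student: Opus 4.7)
The plan is to reduce the statement about pointed mapping spaces to the assertion that $BH\to BG$ is itself a rational equivalence of simply connected spaces, and then to deduce the lemma by standard rational homotopy theory of function spaces.

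First I identify the kernel $K$ of the finite covering $H\to G$; by hypothesis $K$ is a finite group, so the resulting short exact sequence $1\to K\to H\to G\to 1$ yields a fibration
\[
  BK\to BH\to BG.
\]
Since $K$ is a finite discrete group, $H^*(BK;\Q)$ is concentrated in degree $0$, and the Serre spectral sequence gives an isomorphism $H^*(BG;\Q)\xrightarrow{\cong}H^*(BH;\Q)$. As $H$ and $G$ are compact connected Lie groups, both $BH$ and $BG$ are simply connected and of finite $\Q$-type, so $BH\to BG$ is a rational equivalence of simply connected spaces of finite type.

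Next I apply $\map_*(B\Z^m,-)$ to this fibration to obtain
\[
  \map_*(B\Z^m,BK)\to \map_*(B\Z^m,BH)\to \map_*(B\Z^m,BG).
\]
Because $BK=K(K,1)$ with $K$ finite, $\map_*(B\Z^m,BK)$ is weakly equivalent to the finite discrete set $\Hom(\Z^m,K)$, so this is a covering with finite fibers. Restricting to null components gives a finite covering $p\colon\map_*(B\Z^m,BH)_0\to\map_*(B\Z^m,BG)_0$, and it remains to show that $p^*$ is an isomorphism in rational cohomology.

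To close the argument I invoke the following standard principle from rational homotopy theory: if $Y\to Y'$ is a rational equivalence of simply connected spaces of finite $\Q$-type and $X$ is a finite CW complex, then for each pair of corresponding components the induced map $\map_*(X,Y)_f\to \map_*(X,Y')_{f'}$ is a rational equivalence; this comes from Sullivan's model for function spaces (cf.\ Haefliger and Bousfield--Gugenheim). Applied to $BH\to BG$ with $X=B\Z^m=(S^1)^m$ and the null components, this gives that $p$ is a rational equivalence, completing the proof. The main obstacle is in invoking this function-space principle cleanly; a more hands-on alternative is to use the rational decomposition $BG\simeq_\Q\prod_i K(\Q,2n_i)$, where $2n_i$ are the degrees of the polynomial generators of $H^*(BG;\Q)=H^*(BH;\Q)$, together with the standard computation of $\map_*((S^1)^m,K(\Q,n))$ as an explicit product of rational Eilenberg--MacLane spaces, to identify both null components with the same explicit space rationally.
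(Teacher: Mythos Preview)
Your proof is correct and follows essentially the same route as the paper: both arguments use the fibration $BK\to BH\to BG$ with $K$ finite to conclude that $BH\to BG$ is a rational equivalence of simply connected spaces, and then pass to mapping spaces. The paper phrases the last step as ``the rationalization of the map on mapping spaces is a homotopy equivalence,'' while you invoke the function-space principle from Sullivan models more explicitly; your middle paragraph establishing a covering structure on the mapping spaces is an unnecessary detour, since the conclusion follows directly from the rational equivalence $BH\simeq_\Q BG$.
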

\begin{proof}
  Let $K$ be the kernel of the finite covering of Lie groups $H\rightarrow G$.
  It is easy to see that $K$ in center of $H$, $K$ is finite abelian group. 
  So the rationalization of $BK$ is contractible.
  The finite covering induces the fibration $BK\rightarrow BH\rightarrow BG$.
  By considering the rationalization of this fibration, we obtain the homotopy equivalence of the rationalization of $BG$ and $BH$.
  Therefore the rationalization of the map $\map_*(B\Z^m,BH)_0\rightarrow \map_*(B\Z^m,BG)_0$ is also homotopy equivalence, and this map induces isomorphism in cohomology.
\end{proof}
\begin{lemma}[ref. Lemma 3.4 in \cite{KTT}]\label{S^1_hom_eq}
  For $n\geq 1$, the map
  \[
    \Theta\colon \Hom(\Z^m,(S^1)^n)_0\rightarrow \map_*(B\Z^m,B(S^1)^n)_0.
  \]
  is a homotopy equivalence.
\end{lemma}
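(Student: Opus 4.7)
I would reduce to the case $m = n = 1$, where $\Theta$ specializes to the classical weak equivalence $G\simeq\Omega BG$.

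Since both $\Hom(\Z^m,-)$ and $\map_*(B\Z^m,B(-))$ preserve finite products, $\Theta$ for $(S^1)^n$ splits as the $n$-fold product of $\Theta$ for $S^1$, reducing the lemma to $n=1$. For $n=1$, naturality of $\Theta$ in the source, applied to the inclusions $\iota_i\colon\Z\hookrightarrow\Z^m$ of each coordinate factor, yields the commutative square
\[
\xymatrix{
\Hom(\Z^m,S^1)_0 \ar[r]^-{\Theta} \ar[d]_-{(\iota_i^*)_i} & \map_*(B\Z^m,BS^1)_0 \ar[d]^-{((B\iota_i)^*)_i} \\
\prod_{i=1}^m \Hom(\Z,S^1) \ar[r]_-{\prod\Theta} & \prod_{i=1}^m \map_*(B\Z,BS^1).
}
\]
The left vertical is a homeomorphism because evaluation at the standard generators gives $\Hom(\Z^m,S^1)=(S^1)^m$, which is already connected; the bottom row is the $m$-fold product of the $m=1$ case $\Theta\colon S^1=\Hom(\Z,S^1)\to\Omega BS^1=\map_*(B\Z,BS^1)$, which is the canonical weak equivalence $G\simeq\Omega BG$.

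The remaining work is to show that the right vertical is a homotopy equivalence. Both its source and target are $K(\Z^m,1)$'s: the target is $(\Omega BS^1)^m$, and for the source one uses $BS^1\simeq K(\Z,2)$ together with the standard computation $\pi_k\map_*(X,K(\Z,2))_0\cong\widetilde H^{2-k}(X;\Z)$ applied to $X=(S^1)^m$, yielding $\pi_1=\Z^m$ and $\pi_k=0$ for $k\geq 2$. Under these identifications the induced map on $\pi_1$ is the cohomology pullback along each $B\iota_i\colon S^1\hookrightarrow (S^1)^m$, which sends the generator $x_j\in H^1((S^1)^m;\Z)$ to $\delta_{ij}$ and so is the identity on $\Z^m$. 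Commutativity of the square together with the three known equivalences then forces $\Theta$ to be a homotopy equivalence.

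The only nonformal point is identifying $\pi_1\map_*(B\Z^m,BS^1)_0$ with $H^1((S^1)^m;\Z)$ and recognizing $(B\iota_i)^*$ on $\pi_1$ as the cohomology pullback by $B\iota_i$; this is standard Eilenberg--MacLane mapping space theory, and it is the one step where some concrete checking is needed.
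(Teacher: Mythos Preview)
The paper does not prove this lemma; it simply cites it as Lemma~3.4 in \cite{KTT}. Your argument is a correct, self-contained proof: the reduction to $n=1$ via products is immediate, and your commutative square with the coordinate inclusions $\iota_i$ cleanly reduces the question to the classical equivalence $G\simeq\Omega BG$ for $G=S^1$. The identification of $\map_*((S^1)^m,K(\Z,2))_0$ as a $K(\Z^m,1)$ and of the right vertical map on $\pi_1$ with the restriction isomorphism $H^1((S^1)^m;\Z)\xrightarrow{\cong}\prod_i H^1(S^1;\Z)$ is standard and correctly handled. One minor streamlining: since you have already computed that both source and target of $\Theta$ (for $n=1$) are $K(\Z^m,1)$'s, you could bypass the right vertical entirely and just read off from the square that $\Theta$ induces an isomorphism on $\pi_1$, then invoke Whitehead; but your route is equally valid.
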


By using these lemmas, we obtain the following proposition.
\begin{proposition}\label{simple_factor_surjective}
  Let $G$ be a compact connected Lie group, and $K_1,\dots K_k$ be the simple factors of $G$.
  Then 
  \[
    \Theta\colon \Hom(\Z^m,G)_0\rightarrow \map_*(B\Z^m,BG)_0
  \]
  is surjection in rational cohomology if and only if for all $1\leq j \leq k$
  \[
    \Theta\colon \Hom(\Z^m,K_i)_0\rightarrow \map_*(B\Z^m,BK_j)_0
  \]
  is surjection in rational cohomology.
\end{proposition}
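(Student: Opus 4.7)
The plan is to reduce from $G$ to its simple factors in two stages: first pass to a finite cover that is an actual product, then split the product.

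By the definition of simple factors recalled just before the theorem, there is a finite covering homomorphism $\tilde G := K_1 \times \cdots \times K_k \times (S^1)^i \to G$ for some $i$. Naturality of $\Theta$ gives a commutative square whose vertical arrows are the maps induced by the cover. By Lemma \ref{Hom_surjective} and Lemma \ref{map_surjective}, both vertical arrows are isomorphisms in rational cohomology. Consequently, $\Theta$ for $G$ is surjective in rational cohomology if and only if $\Theta$ for $\tilde G$ is. So we may replace $G$ by $\tilde G$ and assume $G$ is an actual product of simple Lie groups and circles.

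Next, I would show that $\Theta$ for a product decomposes as a product of $\Theta$'s. For any two topological groups $H_1, H_2$, the homeomorphism of homomorphism spaces $\Hom(\Z^m, H_1 \times H_2) \cong \Hom(\Z^m, H_1) \times \Hom(\Z^m, H_2)$ restricts on null components to a homeomorphism $\Hom(\Z^m, H_1 \times H_2)_0 \cong \Hom(\Z^m, H_1)_0 \times \Hom(\Z^m, H_2)_0$. Similarly, since $B(H_1 \times H_2) \simeq BH_1 \times BH_2$ and mapping spaces take products of targets to products of mapping spaces, we get $\map_*(B\Z^m, B(H_1 \times H_2))_0 \cong \map_*(B\Z^m, BH_1)_0 \times \map_*(B\Z^m, BH_2)_0$. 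Under these identifications, which are natural with respect to the classifying space functor, the map $\Theta$ for $H_1 \times H_2$ is identified with the product of $\Theta$'s for $H_1$ and $H_2$. Iterating, we get that $\Theta$ for $\tilde G = K_1 \times \cdots \times K_k \times (S^1)^i$ is the product of the $\Theta$'s for each $K_j$ and for $(S^1)^i$.

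Finally, I would invoke the Künneth theorem over $\Q$: a product of maps $f_1 \times \cdots \times f_r$ between spaces with finite-dimensional rational cohomology in each degree is surjective in rational cohomology if and only if each $f_j$ is. (Surjectivity of each factor gives surjectivity of the tensor product; conversely, if some $f_j$ fails to be surjective, a class not in its image, tensored with generators of $H^0$ for the other factors, cannot lie in the image of the tensor product.) By Lemma \ref{S^1_hom_eq}, $\Theta$ for $(S^1)^i$ is a homotopy equivalence and hence surjective, so this factor contributes nothing. Thus $\Theta$ for $\tilde G$ is surjective in rational cohomology if and only if $\Theta$ for each $K_j$ is, which combined with the first paragraph yields the proposition.

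The only subtle point is the behaviour of null components under products and under finite covers; neither is a serious obstacle, but both need to be checked to make sure the identifications above really preserve the component containing the trivial map. For the product, this is clear since the trivial homomorphism on $H_1\times H_2$ corresponds to the pair of trivial homomorphisms, and path components of a product are products of path components. For the cover, this is what is already used in Lemmas \ref{Hom_surjective} and \ref{map_surjective}, so no additional argument is required.
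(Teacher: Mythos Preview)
Your proof is correct and follows essentially the same approach as the paper: reduce to the finite cover via Lemmas \ref{Hom_surjective} and \ref{map_surjective}, split the product using the natural product decompositions of $\Hom$ and $\map_*$, and handle the torus factor with Lemma \ref{S^1_hom_eq}. Your explicit mention of the K\"unneth argument and the check on null components simply makes precise what the paper leaves implicit.
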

\begin{proof}
  There is a commuting diagram
  \[
    \xymatrix{
      \Hom(\Z^m,G_1\times \dots G_n\times (S^1)^i)_0 \ar^-{\Theta}[r] \ar[d] & \map_*(B\Z^m,BG_1\times \dots BG_n\times (BS^1)^i)_0 \ar[d] \\
      \Hom(\Z^m,G)_0 \ar^{\Theta}[r] & \map_*(B\Z^m,BG)_0,
    }
  \]
  and by Lemma \ref{map_surjective} and \ref{Hom_surjective} the vertical arrows are isomorphisms.
  Thus the bottom map is surjective if and only if the top map is surjective.
  Since for two Lie groups $H$ and $K$, there is a natural homeomorphism
  \[
    \Hom(\Z^m, H\times K)_0\cong \Hom(\Z^m,H)_0 \times (\Hom(\Z^m,K)_0)
  \]
  and a natural homotopy equivalence
  \[
    \map_*(B\Z^m,BH\times BK)_0\simeq \map_*(B\Z^m,BH)_0 \times \map_*(B\Z^m,BK)_0,
  \]
  the top map is surjective if and only if the maps
  \begin{align*}
    &\Theta \colon \Hom(\Z^m,G_j)_0\rightarrow \map_*(B\Z^m,BG_j)_0 \quad (1\leq j \leq k)\\
    &\Theta \colon \Hom(\Z^m, S^1)_0\rightarrow \map_*(B\Z^m,BS^1)_0
  \end{align*}
  are surjective.
  By Theorem \ref{S^1_hom_eq}, $\Theta \colon \Hom(\Z^m, S^1)_0\rightarrow \map_*(B\Z^m,BS^1)_0$ is a surjection in rational cohomology. 
  Complete the proof.
\end{proof}
By this proposition, to obtain the main theorem it is enough to prove the surjectivity of $\Theta$ for each simple factors.

When $G$ is classical group, in \cite{KTT} we have obtained that the map $\Theta$ is surjective or not as follows.
\begin{theorem}[Theorem 1.2 and 1.5 and Lemma 3.3 in \cite{KTT}]\label{classical_surjective}
  If $G$ is $SU(n+1),Sp(n)$ or $SO(2n+1)$ for $n\geq 1$, then the map for $m\geq 3$
  \[
    \Theta\colon \Hom(\Z^m,G)_0\rightarrow \map_*(B\Z^m,BG)_0
  \]
  is surjective.
  If $G$ is $SO(2n)$ for $n\geq 4$, then $\Theta$ for $m\geq 3$ is not surjective.
\end{theorem}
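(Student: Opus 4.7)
The plan is to reduce to simply connected Lie groups by Lemmas~\ref{Hom_surjective} and~\ref{map_surjective}, after which one may assume $G$ is one of $SU(n+1)$, $Sp(n)$, $Spin(2n+1)$, $Spin(2n)$. Write $H^*(BG;\Q) = \Q[x_1,\ldots,x_r]$ with $|x_i|=2d_i$. Since simply connected compact Lie groups have formal classifying spaces, $BG$ is rationally $\prod_i K(\Q,2d_i)$; combined with $B\Z^m\simeq(S^1)^m$ and a computation of rational homotopy groups of pointed mapping spaces, this gives
\[
  \map_*(B\Z^m,BG)_0 \simeq_\Q \prod_i \prod_{k=1}^{2d_i-1} K(\Q,k)^{\binom{m}{2d_i-k}},
\]
so $H^*(\map_*(B\Z^m,BG)_0;\Q)$ is a free graded-commutative algebra on generators $x_i/y_I$ of degree $2d_i-|I|$, indexed by pairs $(i,I)$ with $\emptyset\neq I\subset\{1,\ldots,m\}$ and $|I|\leq 2d_i-1$. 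For the domain, Theorem~\ref{Thm_Baird} identifies $H^*(\Hom(\Z^m,G)_0;\Q)$ with $H^*(G/T\times T^m)^W$.

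The key step is to analyze $\Theta^*\circ\Phi^*$ via a slant construction. The composite $\Theta\circ\Phi\colon G/T\times T^m\to\map_*(B\Z^m,BG)_0$ has an adjoint $G/T\times T^m\times B\Z^m\to BG$ obtained by conjugating the tautological homomorphism $\Z^m\to T\to G$, and this induces a Weyl-equivariant slant map
\[
  H^*(BG;\Q)\otimes H_*(B\Z^m;\Q)\to H^*(G/T\times T^m;\Q).
\]
For $G\in\{SU(n+1),Sp(n),Spin(2n+1)\}$, each generator $x_i$ is expressed as a symmetric polynomial in the $u_j\in H^2(BT;\Q)$ whose slant with any exterior monomial remains Weyl-invariant when interpreted in $H^*(G/T\times T^m)$, and a bookkeeping argument shows every target generator $x_i/y_I$ is hit. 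A Poincar\'e series comparison then confirms that $\Theta^*$ is surjective.

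For $Spin(2n)$ with $n\geq 4$, the obstruction is the Euler class $e\in H^{2n}(BSpin(2n);\Q)$, an \emph{extra} generator absent from $H^*(BSpin(2n+1);\Q)$. Its image in $H^*(BT;\Q)$ is the Pfaffian monomial $u_1\cdots u_n$, and when one tries to realize $e/y_I$ via the slant construction, the natural lift fails to descend correctly to $W(D_n)$-invariants, reflecting the fact that the $D_n$ Weyl group is a proper index-two subgroup of $W(B_n)$ on which the Pfaffian is not stable in the slant direction. A Poincar\'e series comparison then exhibits a specific degree where the target has strictly larger rank than the image of $\Theta^*$, proving non-surjectivity. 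The main obstacle will be the uniform construction of slant preimages in the positive cases---requiring case-by-case verification of Weyl invariance via the explicit invariant theory of types $A_n$, $C_n$, $B_n$---and the delicate step of ruling out alternative preimages of $e/y_I$ in the $D_n$ case; the rank computation via Poincar\'e series is the cleanest way to seal the obstruction.
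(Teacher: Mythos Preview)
First, note that this theorem is not proved in the present paper: it is quoted from \cite{KTT} as input to the main result. There is therefore no in-paper argument to compare against directly, but one can benchmark your proposal against the method the paper uses in the parallel $G_2$ case (Sections~\ref{sec:G_2_case}--\ref{sec:G_2_generator}) and against Corollary~\ref{surjective_condition}.

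Your surjectivity argument has a real gap. You correctly describe $\Theta^*$ via a slant construction and observe that its image lies in $(H^*(G/T)\otimes H^*(T^m))^W$, but Weyl-invariance of the image is automatic---the substance is whether those images \emph{generate} the full invariant ring. Your ``bookkeeping argument'' is precisely where the work lies, and the closing ``Poincar\'e series comparison then confirms that $\Theta^*$ is surjective'' cannot succeed as stated: $H^*(\map_*(B\Z^m,BG)_0)$ is a free graded algebra of infinite total dimension while $H^*(\Hom(\Z^m,G)_0)$ is finite-dimensional, so $\Theta^*$ always has a large kernel (cf.\ Proposition~\ref{theta_bigraded}: every $z_{i,I}$ with $|z_i|<2|I|$ dies) and no Poincar\'e-series identity between source and target is available. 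What \cite{KTT} actually does---and what the present paper does for $G_2$ in Theorem~\ref{G_2_generator}---is to produce an explicit finite list of algebra generators for $(H^*(G/T)\otimes H^*(T^m))^W$ and then verify that each one equals $\Theta^*(z_{i,I})$ for a suitable $(i,I)$. Producing those generators is nontrivial invariant theory for the Weyl groups of types $A$, $B$, $C$, not bookkeeping.

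For non-surjectivity in type $D_n$, you are looking at the right culprit (the Euler class) but the mechanism you describe is off. The phrase ``the natural lift fails to descend correctly to $W(D_n)$-invariants'' is confused: everything in the image of $\Theta^*$ is automatically Weyl-invariant, so nothing needs to ``descend''; nor is ``ruling out alternative preimages of $e/y_I$'' the point, since one is not trying to show a particular class is missed but rather that the whole image is too small. The clean argument, exactly as in Corollary~\ref{surjective_condition} and Section~\ref{sec:exceptional_cases} here, is that $\Theta^*$ factors through the computable subalgebra $\mathcal{H}(G,m)$; one then exhibits a single bidegree in which $\dim H^*(\Hom(\Z^m,G)_0)$ strictly exceeds $\dim\mathcal{H}(G,m)$, which forces non-surjectivity without analyzing any individual class.
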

In this paper, we will consider the exceptional Lie group cases
To prove the following theorems is the main part of this paper.
\begin{theorem}\label{G_2_surjective}
  The map $\Theta\colon \Hom(\Z^m,G_2)_0\rightarrow \map_*(B\Z^m,BG_2)_0$ for $m\geq 1$ is surjection in rational cohomology.
\end{theorem}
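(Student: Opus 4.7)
The plan is to prove surjectivity of $\Theta^*\colon H^*(\map_*(B\Z^m, BG_2)_0)\to H^*(\Hom(\Z^m, G_2)_0)$ by exhibiting, for each generator of the codomain, an explicit preimage coming from a natural set of generators of the domain.

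First, I would describe both sides concretely. Baird's theorem (Theorem \ref{Thm_Baird}) identifies $H^*(\Hom(\Z^m, G_2)_0)\cong H^*(G_2/T\times T^m)^W$, where $W$ is the dihedral group of order $12$ and $T\cong(S^1)^2$ a maximal torus. Since $BG_2$ is rationally formal with $H^*(BG_2;\Q)=\Q[y_4,y_{12}]$, it is rationally equivalent to $K(\Q,4)\times K(\Q,12)$; combined with $B\Z^m\simeq(S^1)^m$, a standard mapping-space computation presents $H^*(\map_*(B\Z^m, BG_2)_0)$ as a free graded-commutative algebra on slant-product generators $y_d/[T^S]$, indexed by $d\in\{4,12\}$ and nonempty $S\subseteq\{1,\ldots,m\}$ with $|S|<d$, the generator sitting in degree $d-|S|$.

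Second, I would describe $\Phi^*\Theta^*$ explicitly through the universal evaluation map $\Hom(\Z^m,G_2)_0\times B\Z^m\to BG_2$: precomposing with $\Phi$ gives a composite $G_2/T\times T^m\times(S^1)^m\to BG_2$ that, up to homotopy, factors through the map $T^m\times(S^1)^m\to BT$ determined by the torus coordinates. This allows one to write $\Phi^*\Theta^*(y_d/[T^S])$ as a $W$-invariant class in $H^*(G_2/T\times T^m)$ built from the polynomial $y_d\in H^*(BT)=\Q[t_1,t_2]$ paired against the product of degree-$1$ $T^m$-classes specified by $S$. The goal is to verify that, as $(d,S)$ ranges, these images provide a generating set for $H^*(G_2/T\times T^m)^W$ as an algebra.

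The main obstacle is the degree-$12$ side. For $G_2$, the fundamental Weyl invariant of degree $12$ in $\Q[t_1,t_2]$ is a genuine sextic form, not a power of the degree-$4$ quadratic invariant, so the classes $y_{12}/[T^S]$ give rise to algebraically subtler elements than their $SU(n)$, $Sp(n)$, or $Spin(2n+1)$ counterparts in Theorem \ref{classical_surjective}; moreover, unlike the other exceptional groups handled by Hilbert--Poincar\'e series in Section \ref{sec:exceptional_cases}, a dimension count alone does not suffice to conclude surjectivity. Thus the crux of the argument is the hands-on construction of these degree-$12$ generators in the $W$-invariant mixed ring $H^*(G_2/T\times T^m)^W$, which is the content of Section \ref{sec:G_2_generator}. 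Once generators covering both $y_4$ and $y_{12}$ ranges are exhibited, surjectivity of $\Theta^*$ follows since its image is closed under algebra operations.
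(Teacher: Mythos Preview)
Your proposal is correct and follows essentially the same approach as the paper: compute the images $\Theta^*(z_{i,I})$ explicitly via the evaluation map and the factorization through $G_2/T\times BT$ (Lemmas~\ref{phi_hat_cohomology} and~\ref{eval_torus}), identify them up to nonzero scalars with the elements $z(3-|I|,I)$ and $z(7-|I|,I)$, and then invoke Theorem~\ref{G_2_generator} (whose lengthy proof occupies Section~\ref{sec:G_2_generator}) to conclude these generate $H^*(\Hom(\Z^m,G_2)_0)$. The paper's proof in Section~\ref{sec:G_2_case} carries out exactly this reduction.
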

\begin{theorem}\label{exceptional_non_surjective}
  When $G$ is $F_4,E_6,E_7$ or $E_8$, the map $\Theta\colon \Hom(\Z^m,G)_0\rightarrow \map_*(B\Z^m,BG)_0$ for $m\geq 3$ is not surjection in rational cohomology.
\end{theorem}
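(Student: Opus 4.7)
The strategy is to disprove surjectivity by comparing rational Hilbert--Poincar\'e series. Surjectivity of $\Theta^*$ in rational cohomology would force $\dim_\Q H^d(\Hom(\Z^m,G)_0)\leq\dim_\Q H^d(\map_*(B\Z^m,BG)_0)$ for every $d$, so it suffices to exhibit, for each $G\in\{F_4,E_6,E_7,E_8\}$ and each $m\geq 3$, a single degree in which the reverse strict inequality holds.

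On the domain side, Baird's identification $H^*(\Hom(\Z^m,G)_0)\cong H^*(G/T\times T^m)^W$ (Theorem \ref{Thm_Baird}), combined with Molien's formula applied to the $W$-action on the reflection representation $V$ (via $H^*(G/T)=S(V^*)/(S(V^*)^W_+)$ and $H^*(T^m)=\Lambda(V^*)^{\otimes m}$), yields
\[
P_{\mathrm{Hom}}(G;z)=\frac{1}{|W|}\sum_{w\in W}\frac{\prod_{i=1}^{r}(1-z^{2d_i})\cdot \det(1+zw)^m}{\det(1-z^2 w)},
\]
where $r$ is the rank of $G$ and the $d_i$ are its fundamental degrees. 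On the codomain side, polynomiality of $H^*(BG;\Q)$ gives $BG\simeq_\Q\prod_i K(\Q,2d_i)$, and the ensuing rational splitting $\map_*((S^1)^m,BG)_0\simeq_\Q\prod_{\varnothing\neq S\subseteq\{1,\dots,m\}}\Omega^{|S|}BG$ produces
\[
P_{\mathrm{map}}(G;z)=\prod_{i=1}^{r}\prod_{j=1}^{\min(m,2d_i-1)}F_{d_i,j}(z)^{\binom{m}{j}},
\]
with $F_{d_i,j}(z)=(1-z^{2d_i-j})^{-1}$ when $2d_i-j$ is even and $F_{d_i,j}(z)=1+z^{2d_i-j}$ when $2d_i-j$ is odd. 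Both formulas are the output of Section \ref{sec:computation_of_Poincare}.

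With both series in hand, the proof reduces to a case-by-case comparison. For each of the four exceptional Lie groups and each $m\geq 3$, I would expand $P_{\mathrm{Hom}}$ by running Molien's sum over the conjugacy classes of $W$ (only the characteristic polynomial of $w$ on $V$ enters) and truncate the product formula for $P_{\mathrm{map}}$ to a common moderate degree, then locate the smallest $d$ for which $[z^d]\bigl(P_{\mathrm{Hom}}(G;z)-P_{\mathrm{map}}(G;z)\bigr)>0$. Treating $m=3$ first should produce the simplest witness; for $m\geq 4$, monotonicity of both series in $m$ --- entering through $\binom{m}{j}$ on the map-space side and through $\det(1+zw)^m$ on the commuting-variety side --- lets one either transfer the witness or repeat the expansion with only modest additional bookkeeping.

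The main obstacle is computational rather than conceptual: although Molien's sum reduces to conjugacy classes with tabulated characteristic polynomials on the reflection representation, the Weyl groups of $E_7$ and $E_8$ have many such classes, and tracking enough truncated power series to isolate and confirm a witness degree is the bulk of the work. A secondary concern is verifying that the inequality survives for all $m\geq 3$ rather than being peculiar to small $m$; a uniform-in-$m$ estimate comparing the two closed-form expressions above is what resolves this.
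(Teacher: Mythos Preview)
Your plan shares the paper's core idea---rule out surjectivity via a Poincar\'e-series inequality---but it misses two refinements that the paper uses, and without them the argument is not known to go through.

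First, the paper does not compare $H^*(\Hom(\Z^m,G)_0)$ against the full $H^*(\map_*(B\Z^m,BG)_0)$ in total degree. Section~\ref{sec:cohomology} equips both sides with a compatible \emph{bigrading} and shows (Proposition~\ref{theta_bigraded}) that $\Theta^*$ annihilates every generator $z_{i,I}$ with $|z_i|<2|I|$. Hence $\Theta^*$ factors through the free bigraded algebra $\mathcal H(G,m)$ on the remaining generators, and Corollary~\ref{surjective_condition} reduces non-surjectivity to finding a single \emph{bidegree} at which the $\Hom$-side exceeds $\mathcal H(G,m)$. Lemmas~\ref{Poincare_hom_key_term}--\ref{Poincare_map_key_term} then exhibit such a bidegree for each of $F_4,E_6,E_7,E_8$ at $m=3$. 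Your test is strictly weaker on two counts---you neither discard the known kernel nor split by bidegree---and there is no a~priori reason it succeeds. For example, the paper's $E_7$ witness lives in bidegree $(6,3)$, hence total degree~$9$; but summing the bigraded series in Section~\ref{sec:computation_of_Poincare} over $i+j=9$ gives $\dim H^9(\Hom(\Z^3,E_7)_0)=32$, while counting degree-$9$ monomials in the free algebra on $z_{1,\{1,2,3\}}$ (degree~$1$), the three $z_{1,\{i,j\}}$ (degree~$2$), the three $z_{1,\{i\}}$ (degree~$3$), and $z_{2,\{1,2,3\}}$ (degree~$9$) already gives $\dim H^9(\map_*(B\Z^3,BE_7)_0)=56$. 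So your inequality fails at the obvious candidate degree, and whether it holds at some other degree is an unperformed computation, not a proof.

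Second, your passage from $m=3$ to $m\ge 3$ via ``monotonicity'' is not an argument: both series grow with $m$, and a witness need not persist. The clean reduction (which the paper's terse proof leaves implicit) is by retraction: the splitting $\Z^3\hookrightarrow\Z^m\twoheadrightarrow\Z^3$ onto the first three coordinates induces, naturally in $\Theta$, compatible retractions on both $H^*(\Hom(\Z^{\bullet},G)_0)$ and $H^*(\map_*(B\Z^{\bullet},BG)_0)$. If $\Theta^*$ for $\Z^m$ were surjective, a diagram chase would force $\Theta^*$ for $\Z^3$ to be surjective as well; the contrapositive gives $m\ge 3$ from $m=3$.
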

Now we prove the main theorem assuming these two theorems.
\begin{proof}[Proof of Theorem \ref{main1}]
  By combining Theorem \ref{G_2_surjective}, \ref{exceptional_non_surjective} and Proposition \ref{simple_factor_surjective}, we obtain this theorem.
\end{proof}

\section{The rational cohomology of $\Hom(\Z^m,G)$ and $\map_*(B\Z^m,BG)$}\label{sec:cohomology}
In this section we define a bigraded algebra structure of the rational cohomology of $\Hom(\Z^m,G)_0$ and $\map_*(B\Z^m,BG)_0$.
Most part of this section is based on \cite{RS1,KT1}.
\subsection{Bigraded algebra structure of $H^*(\Hom(\Z^m,G))$}

We will consider a bigraded algebra structure of $H^*(\Hom(\Z^m,G)_0)$ and its Hilbert-Poincar\'{e} series.
For $x\in H^i(G/T)\otimes H^j(T^m)$ let the bidegree of $x$, $\deg(x)$, be $(i,j)$.
By this bidegree $H^*(G/T\times T^m)\cong H^*(G/T)\otimes H^*(T^m)$ becomes a bigraded algebra.
By Theorem \ref{Thm_Baird} $H^*(\Hom(\Z^m,G)_0)$ can be regarded as a subring in $H^*(G/T\times T^m)$, so $H^*(\Hom(\Z^m,G)_0)$ has the bigraded algebra structure induced by $H^*(G/T\times T^m)$.

For a bigraded algebra over a field $M=\oplus M_{i,j}$ where $M_{i,j}$ is $(i,j)$ degree part of $M$, we define the Hilbert-Poincar\'{e} series of $M$ as
\[
  P(M,s,t)=\sum_{i,j} \dim(M_{i,j})s^it^j,
\]
where $\dim(M_{i,j})$ is the dimension of $M_{i,j}$ as a linear space.
When $M$ is a cohomology of a space $X$, we sometimes write $P(M,s,t)$ as $P(X,s,t)$.

The Hilbert-Poincar\'{e} series of $H^*(\Hom(\Z^m,G)_0)$ is computed in \cite{RS1} as follows.
\begin{theorem}\label{bigraded_Poincare_formula}
  Let $d_1,\dots d_r$ be the characteristic degree of $W$.
  The Hilbert-Poincar\'{e} series of $H^*(\Hom(\Z^m,G)_0)$ is 
  \[
    P(\Hom(\Z^m,G)_0,s,t)=\frac{1}{|W|}\prod_{i=1}^{r}(1-s^{d_i})\sum_{w\in W}\frac{(\det(1+tw))^m}{\det(1-s^2w)},
  \]
  where the determinants are taken as the reflection group structure of $W$.
\end{theorem}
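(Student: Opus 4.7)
The plan is to combine Theorem \ref{Thm_Baird} with a Molien-type character computation. By Theorem \ref{Thm_Baird}, $H^*(\Hom(\Z^m,G)_0)$ is isomorphic as a bigraded algebra to the $W$-invariant subalgebra of $H^*(G/T)\otimes H^*(T^m)$, where $W$ acts diagonally on the two factors. Molien's averaging formula for the invariants of a finite group acting on a bigraded $\Q$-vector space then yields
\[
P(\Hom(\Z^m,G)_0,s,t)=\frac{1}{|W|}\sum_{w\in W} P_w(H^*(G/T);s)\cdot P_w(H^*(T^m);t),
\]
where $P_w(\cdot)$ denotes the graded trace of $w$. It then suffices to identify the two equivariant Poincar\'e series on the right.

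The factor for $T^m$ is immediate: $H^*(T^m)$ is the exterior algebra on $H^1(T^m)\cong V^{\oplus m}$, where $V$ is the reflection representation of $W$, so the standard exterior-trace identity gives $P_w(H^*(T^m);t)=\det(1+tw)^m$. For the factor corresponding to $G/T$, I would invoke Borel's theorem: over $\Q$ the fibration $G/T\to BT\to BG$ is totally nonhomologous to zero, yielding a $W$-equivariant isomorphism $H^*(BT)\cong H^*(G/T)\otimes H^*(BG)$ in which $W$ acts trivially on $H^*(BG)=H^*(BT)^W$. Since $H^*(BT)=S(V)$ has equivariant Poincar\'e series $1/\det(1-s^2w)$ and $H^*(BG)$ has ordinary Poincar\'e series $\prod_{i=1}^{r} 1/(1-s^{d_i})$ with $d_1,\dots,d_r$ the characteristic degrees of $W$, division gives
\[
P_w(H^*(G/T);s)=\frac{\prod_{i=1}^{r}(1-s^{d_i})}{\det(1-s^2w)}.
\]
Substituting both expressions into the Molien sum produces the stated formula.

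The only subtle point is checking that the two algebraic decompositions used are $W$-equivariant. The K\"unneth splitting $H^*(G/T\times T^m)\cong H^*(G/T)\otimes H^*(T^m)$ is equivariant because $W$ acts diagonally on the two factors, and the Borel splitting $H^*(BT)\cong H^*(G/T)\otimes H^*(BG)$ is equivariant because $W$ acts trivially on the base cohomology $H^*(BG)\subset H^*(BT)^W$. Neither step is a genuine obstacle; the theorem is essentially a piece of character-theoretic bookkeeping built on Baird's description of $H^*(\Hom(\Z^m,G)_0)$ as the ring of $W$-invariants in $H^*(G/T\times T^m)$.
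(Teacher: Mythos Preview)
The paper does not give its own proof of this statement; it is quoted from Ramras--Stafa \cite{RS1}. Your argument is correct and is essentially the standard derivation: Baird's theorem identifies the cohomology with the $W$-invariants of $H^*(G/T)\otimes H^*(T^m)$, Molien averaging reduces the bigraded Poincar\'e series to a sum of products of graded traces, and the two factors are evaluated via the exterior-algebra identity and the Chevalley decomposition of $H^*(BT)$.

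One small remark on your last paragraph: the $W$-equivariance of the splitting $H^*(BT)\cong H^*(G/T)\otimes H^*(BG)$ does not follow merely from the fact that $W$ acts trivially on $H^*(BG)$; knowing that one tensor factor carries the trivial action does not by itself determine the $W$-module structure on the total space. What is actually being invoked is Chevalley's theorem that $S(V)\cong S(V)^W\otimes S(V)_W$ as graded $W$-modules (equivalently, that $S(V)$ is free over its invariant subring with the coinvariant algebra as a $W$-equivariant fiber). This is a genuine theorem rather than formal bookkeeping, but it is classical and your use of it is entirely appropriate.
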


\subsection{Bigraded ring structure of $H^*(\map_*(B\Z^m,BG))$}
Next we will give a bigraded algebra structure of $H^*(\map_*(B\Z^m,BG)_0)$ compatible with that of $H^*(\Hom(\Z^m,G)_0)$.

Let $r$ be the rank of $G$.
It is well known that the rational cohomology of $BG$ is isomorphic to a polynomial ring generated by $r$ elements.
We denote 
\[
  H^*(BG)=\Q[z_1,z_2,\dots z_r].
\]
Since $B\Z^m$ is homotopy equivalent to the direct product of $m$ copies of $S^1$, 
\[
  H^*(B\Z^m)=\Lambda(t_1,\dots t_m),
\]
where $\Lambda(S)$ is the exterior algebra generated by a set $S$.
For $I=\{i_1<i_2<\dots <i_k\}\subset \{1,\dots m\}$, we denote $t_I=t_{i_1}t_{i_2}\dots t_{i_k}$.
Let $\omega$ be the evaluation map 
\[
  \omega\colon \map_*(B\Z^m,BG)_0\times B\Z^m\rightarrow BG.
\]
For $z_i\in H^*(BG)$ and $I\subset \{1,\dots m\}$, there exists $z_{i,I}\in H^*(\map_*(B\Z^m,BG)_0)$ such that
\[
  \omega^*(z_i)=\sum_{\emptyset\ne I\subset \{1,\dots m\}}z_{i,I}\times t_I.
\]
By degree reason $z_{i,I}=0$ for $|z_i|<|I|$.
The rational cohomology of $\map_*(B\Z^m,BG)$ is computed as follows.
\begin{proposition}\label{generator_map_BG}
  The cohomology $H^*(\map_*(B\Z^m,BG))$ is isomorphic to the free commutative graded algebra generated by 
  \[
    \{z_{i,I}\mid1\leq i\leq r, \emptyset\ne I\subset \{1,\dots m\}, |z_{i}|>|I|\}.
  \]
\end{proposition}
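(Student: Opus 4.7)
The plan is to determine the rational homotopy type of $\map_*(B\Z^m,BG)_0$ factor-by-factor, using the fact that $BG$ is rationally a product of Eilenberg--MacLane spaces, and then to identify the resulting free algebra generators with the classes $z_{i,I}$ coming from the evaluation map.

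First, since $H^*(BG;\Q)=\Q[z_1,\dots,z_r]$ is a free graded commutative algebra on even-degree generators, $BG$ is rationally equivalent to $\prod_{i=1}^{r}K(\Q,|z_i|)$. As $\map_*(B\Z^m,-)$ commutes with products and preserves rational equivalences between simply connected targets, we obtain
\[
  \map_*(B\Z^m,BG)_0\simeq_\Q \prod_{i=1}^{r}\map_*(B\Z^m,K(\Q,|z_i|))_0,
\]
which reduces the computation to the single Eilenberg--MacLane case.

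Second, the classical formula $\pi_k(\map_*(X,K(\Q,n)))\cong\widetilde H^{n-k}(X;\Q)$ with $X=T^m$, together with the basis $\{t_I\mid |I|=j,\ I\ne\emptyset\}$ of $\widetilde H^{j}(T^m;\Q)$, shows that $\map_*(T^m,K(\Q,n))_0$ is rationally the generalized Eilenberg--MacLane space $\prod_{\emptyset\ne I,\,|I|<n}K(\Q,\,n-|I|)$. Assembling the factors gives
\[
  \map_*(B\Z^m,BG)_0 \simeq_\Q \prod_{i=1}^{r}\ \prod_{\substack{\emptyset\ne I\subset\{1,\dots,m\}\\|I|<|z_i|}}K(\Q,\,|z_i|-|I|),
\]
whose cohomology is the free graded commutative algebra on one generator of degree $|z_i|-|I|$ for each admissible pair $(i,I)$. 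This already matches the generating set (and, via Theorem \ref{bigraded_Poincare_formula}, will eventually be compared against the Hilbert--Poincar\'e series) claimed in the proposition.

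Third, to identify these abstract generators with the $z_{i,I}$ defined by $\omega^*(z_i)=\sum_{I}z_{i,I}\times t_I$, I would work one Eilenberg--MacLane factor at a time. For a target $K(\Q,n)$ the evaluation $\map_*(T^m,K(\Q,n))\times T^m\to K(\Q,n)$ is adjoint to the identity of the mapping space, so combined with the K\"unneth formula this forces the coefficient of $t_I$ in the pullback of the fundamental class to be the fundamental class of the $K(\Q,n-|I|)$ factor of the GEM model. Naturality under a chosen rational equivalence $BG\to\prod_i K(\Q,|z_i|)$ then transfers this identification to the $z_{i,I}$ appearing in the statement. I expect the main obstacle to be precisely this last identification step: one must set up the rational splitting of $BG$ compatibly with the polynomial generators $z_1,\dots,z_r$ and then carefully trace the evaluation through the mapping-space adjunction to verify that the abstract GEM generators agree with the $z_{i,I}$. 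Everything preceding that is essentially standard bookkeeping in rational homotopy theory.
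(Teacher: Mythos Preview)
The paper does not actually prove this proposition; it is stated as a known computation, with the section's opening remark indicating that the material is drawn from \cite{RS1,KT1}. Your argument is the standard rational-homotopy approach and is correct.

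The identification step you flag as the main obstacle is routine once the GEM model is in place. For each pair $(i,I)$ with $|I|<|z_i|$, take the adjoint of a map $S^{|z_i|-|I|}\wedge T^m\to \prod_j K(\Q,|z_j|)$ representing the class $\sigma\otimes t_I$ in the $i$-th factor (where $\sigma$ is the fundamental class of the sphere); this gives a sphere in $\map_*(T^m,BG)_0$, and by construction $z_{i,I}$ restricts to the generator of $H^{|z_i|-|I|}(S^{|z_i|-|I|})$ on it while all other $z_{j,J}$ restrict to zero. Hence the $z_{i,I}$ are linearly independent in the indecomposables, and since their degrees match the homotopy ranks you already computed, they freely generate.
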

We define bidegree of the generators of $H^*(\map_*(B\Z^m,BG))$, $z_{i,I}$, as 
\[
  \deg(z_{i,I})=\begin{cases}
    (|z_i|-2|I|,|I|)\quad &(|z_i|\geq 2|I|)\\
    (0,|z_i|-|I|)\quad &(|z_i|< |I|),
  \end{cases}
\]
and bigraded algebra structure of $H^*(\map_*(B\Z^m,BG))$ is induced by this bidegree.
The bidegree of $z_{k,I}$ for $|z_i|< |I|$ is not essential, because such classes are in the kernel of $\Theta$.(ref. Proposition \ref{theta_bigraded})

\subsection{The map $\Theta$}
In this subsection we prove that the map 
\[
  \Theta\colon \Hom(\Z^m,G)_0\rightarrow \map_*(B\Z^m,BG)_0
\]
preserves bigraded algebra structure.
To prove this, the following map 
\[
  \widehat{\Phi}\colon G/T\times \map_*(B\Z^m,BT)_0\rightarrow \map_*(B\Z^m,BG)_0
\]
has an important role, and to define this map a construction of the classifying space is needed.

We use the Milnor construction \cite{M} as a construction for the classifying space.
Let $EG$ be the colimit of the sequence of $i$-join of $G$,
\[
  EG=\lim_{i\rightarrow \infty}G*G*\dots *G.
\]
As in \cite{M}, a point of $EG$ is denoted by
\[
  t_1g_1\oplus t_2g_2 \oplus\dots 
\]
such that $g_i \in G$, $t_i\geq 0$ with except for finite $i$ $t_i=0$, $\sum_{i\geq 1} t_i=1$ and 
\[
  t_1g_1\oplus t_2g_2 \oplus\dots=s_1h_1\oplus s_2h_2 \oplus\dots 
\]
if and only if for each $i$ one of the following condition holds;
\begin{enumerate}
  \item $t_i=s_i=0$.
  \item $t_i=s_i$ and $g_i=h_i$.
\end{enumerate}
$G$ acts on $EG$ by 
\[
  (t_1g_1\oplus t_2g_2 \oplus\dots)\cdot g = t_1g_1g \oplus t_2g_2g \oplus\dots
\]
for $g\in G$.
This action is free and the quotient space by this action is the classifying space of $G$, $BG=EG/G$.
The map $\iota\colon BT\rightarrow EG/T$ induced by the inclusion $ET\rightarrow EG$ is homotopy equivalence.
Now we fix a homotopy inverse of $\iota$, $\iota^{-1}$, and define the map
\[
  \widehat{\phi} \colon G/T\times BT \rightarrow EG/T \xrightarrow{\iota^{-1}} BT
\]
as $\widehat{\phi}((g,t_1g_1\oplus t_2g_2 \oplus \dots))=\iota^{-1}(t_1gg_1\oplus t_2gg_2 \oplus\dots)$ for $g \in G$.
And we define the map $\phi$ as the composition of the following maps
\[
  \phi \colon G/T\times BT \rightarrow EG/T \rightarrow BG,
\]
where the first map is same as the first map of the definition in $\widehat{\phi}$, and the second map is the quotient map.
Since $\iota^{-1}$ is a homotopy equivalence, there is a homotopy commutative diagram
\[
    \xymatrix{
      G/T\times BT \ar^{\widehat{\phi}}[r]   \ar_{\phi}[rd] & BT \ar^{\Theta}[d]\\
      & BG.
    }
  \]
This $\phi$ induces the map 
\[
  \widehat{\Phi}\colon G/T\times \map_*(B\pi,BT)_0\rightarrow \map_*(B\pi,BG)_0.
\]
Since $T$ is commutative, $\Hom(\Z^m,T)_0 = T^m$ in the same way as $\Hom(\Z^m,G)_0 \subset G^m$.
The following lemma holds.
\begin{lemma}[Lemma 2.4 in \cite{KTT}]\label{commutative_theta_phi}
  There is a commutative diagram
  \[
    \xymatrix{
      G/T\times T^m \ar^{\Phi}[r]   \ar_{1\times \Theta}[d] & \Hom(\Z^m,G)_0 \ar^{\Theta}[d]\\
      G/T\times \map_*(B\Z^m,BT)_0 \ar^-{\widehat{\Phi}}[r] & \map_*(B\Z^m,BG)_0.
    }
  \]
\end{lemma}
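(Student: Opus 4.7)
The plan is a pointwise diagram chase, which I will reduce to an on-the-nose identification of the map $\phi(g,-)\colon BT\to BG$ with the classifying map of the conjugation homomorphism $c_g|_T\colon T\to G$, $t\mapsto gtg^{-1}$, both computed inside the Milnor join construction.

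Fix a point $(gT,(t_1,\dots,t_m))\in G/T\times T^m$ and let $\rho_T\colon \Z^m\to T$ be the homomorphism sending $e_i\mapsto t_i$, so that $\rho:=c_g\circ\rho_T\colon \Z^m\to G$ is the homomorphism $e_i\mapsto gt_ig^{-1}$. Going right then down in the diagram gives $\Theta(\Phi(gT,(t_1,\dots,t_m)))=B\rho\colon B\Z^m\to BG$, while going down then right gives $\widehat\Phi(gT,B\rho_T)$, which by the definition of $\widehat\Phi$ from $\phi$ is the composite $\phi(g,-)\circ B\rho_T\colon B\Z^m\to BT\to BG$. Functoriality of the classifying space functor gives $B\rho=B(c_g|_T)\circ B\rho_T$, so commutativity of the outer square reduces to the equality
\[
  \phi(g,-)=B(c_g|_T)\colon BT\to BG.
\]

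To verify this I work directly in the Milnor construction. Representing a point of $BT=ET/T$ by the class of $s_1a_1\oplus s_2a_2\oplus\cdots$ with $a_i\in T$, the map $B(c_g|_T)$ sends it to the class of $s_1ga_1g^{-1}\oplus s_2ga_2g^{-1}\oplus\cdots$ in $EG/G$, whereas by construction $\phi$ sends the same point to the class of $s_1ga_1\oplus s_2ga_2\oplus\cdots$ in $EG/G$. Right multiplication by $g^{-1}\in G$, which is trivial in $EG/G$, identifies these two classes, and hence $\phi(g,-)=B(c_g|_T)$ strictly.

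The main obstacle is purely notational: the map $\iota\colon BT\to EG/T$, the quotient $EG/T\to BG$, and the induced maps $B(-)$ on classifying spaces must all be read in a single consistent Milnor model so that the above identification holds on the nose rather than only up to homotopy. Once the conventions are fixed consistently, the verification is a very short direct computation, and this is essentially the argument of Lemma~2.4 in \cite{KTT}.
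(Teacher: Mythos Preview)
Your argument is correct. The paper itself does not prove this lemma at all: it is quoted verbatim as Lemma~2.4 of \cite{KTT} and used as a black box, so there is no proof in the present paper to compare against. Your pointwise verification---reducing the square to the identity $\phi(gT,-)=B(c_g|_T)$ in $BG=EG/G$, and checking that identity by noting that $[s_1ga_1\oplus s_2ga_2\oplus\cdots]_G=[s_1ga_1g^{-1}\oplus s_2ga_2g^{-1}\oplus\cdots]_G$ via the right $G$-action---is exactly the natural proof and is presumably what \cite{KTT} records.
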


Next, we review some fact about the cohomology. 
The cohomology of the classifying space of the maximal torus, $BT$, is polynomial ring generated by $r$ degree $2$ generators.
So we denote 
\[
  H^*(BT)\cong \Q[x_1,\dots x_r].
\]
We take polynomials $f_1(x_1,\dots x_r),\dots f_r(x_1,\dots x_r)\in \Q[x_1,\dots x_r]$ as an image of $z_1,\dots z_r$ by the inclusion $BT\rightarrow BG$, then there is an isomorphism
\[
  H^*(BG)\cong \Q[f_1(x_1,\dots x_r),\dots f_r(x_1,\dots x_r)].
\]
Now the cohomology of $G/T$ is
\[
  H^*(G/T)\cong \Q[x_1,\dots x_r]/(f_1(x_1,\dots x_r),\dots f_r(x_1,\dots x_r)).
\]
The following lemma holds.
\begin{lemma}[Lemma 3.2 \cite{KTT}]\label{phi_hat_cohomology}
  For each $x_i\in H^2(BT)$,
  \[
    \widehat{\phi}(x_i)=x_i \times 1 + 1\times x_i \in H^*(G/T) \otimes H^*(BT)
  \]
\end{lemma}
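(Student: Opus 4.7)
The plan is to determine $\widehat{\phi}^*(x_i)$ by computing its restriction to each of the two slices $\{eT\}\times BT$ and $G/T\times\{\ast\}$, and then reconstructing the class via the K\"unneth theorem. The key preliminary observation is that both $G/T$ and $BT$ have rational cohomology concentrated in even degrees, so the K\"unneth decomposition
\[
  H^2(G/T\times BT)\;\cong\;H^2(G/T)\otimes H^0(BT)\;\oplus\;H^0(G/T)\otimes H^2(BT)
\]
has no cross terms, and hence any degree-$2$ class is determined by its two slice restrictions.

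For the first slice, I would restrict $\widehat{\phi}$ to $\{eT\}\times BT$. With $g=e$ the defining formula reduces to $(eT,[u])\mapsto \iota^{-1}(\iota([u]))$, which is homotopic to $\mathrm{id}_{BT}$ because $\iota^{-1}$ was fixed as a homotopy inverse of $\iota$. Hence this restriction sends $x_i\in H^2(BT)$ to $x_i$ itself, contributing the term $1\times x_i$.

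For the second slice, I would fix a basepoint $\ast\in BT$ represented by the identity $e\in ET\subset EG$. Then $\widehat{\phi}|_{G/T\times\{\ast\}}$ becomes the composite $G/T\to EG/T\xrightarrow{\iota^{-1}}BT$ whose first arrow is $gT\mapsto [g]_T$. The heart of the argument is that this first arrow classifies the principal $T$-bundle $G\to G/T$: pulling back the universal bundle $EG\to EG/T$ along $gT\mapsto[g]_T$ gives the subspace $\{(gT,gt)\mid g\in G,\,t\in T\}\subset G/T\times EG$, which is canonically identified with $G$ as a principal $T$-bundle over $G/T$ via $(gT,gt)\mapsto gt$. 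Since the generators $x_i$ of $H^*(G/T)=\Q[x_1,\dots,x_r]/(f_1,\dots,f_r)$ are, by the convention fixed in the paragraph preceding the lemma, the images of the $x_i\in H^*(BT)$ under the classifying map of this bundle, the restriction sends $x_i$ to $x_i$, contributing the term $x_i\times 1$.

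Assembling the two slice computations via K\"unneth yields the asserted formula $\widehat{\phi}^*(x_i)=x_i\times 1+1\times x_i$. The main obstacle is the identification in the second step, namely verifying cleanly that the map $gT\mapsto[g]_T$ from $G/T$ into $EG/T$ classifies the bundle $G\to G/T$; once this pullback identification is written out, the rest is a formal consequence of the chosen homotopy inverse of $\iota$ and the vanishing of odd cohomology in each factor.
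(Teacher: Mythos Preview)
Your argument is correct. The paper does not prove this lemma at all: it is quoted verbatim as Lemma~3.2 from \cite{KTT} and used without proof, so there is no in-paper argument to compare against. Your two-slice computation via the K\"unneth splitting in degree $2$ (using that $H^*(G/T)$ and $H^*(BT)$ are concentrated in even degrees), the identification of $\widehat{\phi}|_{\{eT\}\times BT}$ with $\iota^{-1}\circ\iota\simeq\mathrm{id}_{BT}$, and the identification of $\widehat{\phi}|_{G/T\times\{\ast\}}$ with a classifying map for the principal $T$-bundle $G\to G/T$, together recover the formula exactly as stated. The only point worth stating more explicitly is that the isomorphism $H^*(G/T)\cong\Q[x_1,\dots,x_r]/(f_1,\dots,f_r)$ in the paper is, by the standard Borel description, precisely the one induced by any classifying map of $G\to G/T$; you rely on this convention in the second slice, and it is indeed the intended one.
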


We will compute the evaluation map $\omega\colon \map_*(B\Z^m,BT)_0 \times B\Z^m\rightarrow BT$ in rational cohomology.
We denote $H^*(T^m)=H^*(\Hom(\Z^m,T))=\Lambda_{1\leq j \leq m}(y_{1}^{j},\dots ,y_{r}^{j})$.
By Lemma \ref{S^1_hom_eq}, $H^*(\map_*(B\Z^m,BT)_0)$ can also be written as $\Lambda_{1\leq j \leq m}(y_{1}^{j},\dots ,y_{r}^{j})$.
The following lemma holds.
\begin{lemma}[Lemma 3.5 in \cite{KTT}]\label{eval_torus}
  For $x_i \in H^*(BT)$, there is an equation
  \[
    \omega^*(x_i)=y_i^1\times t_1+\dots +y_i^m\times t_m.
  \]
\end{lemma}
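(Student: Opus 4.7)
The plan is to compute $\omega^*(x_i)$ by reducing to the base case $r=m=1$ through structural decompositions. First, I would apply K\"unneth and note that $\omega$ restricted to $\{*\}\times B\Z^m$ or to $\map_*(B\Z^m,BT)_0\times\{*\}$ is constant at the basepoint of $BT$, pulling $x_i$ back to zero and forcing $\omega^*(x_i)$ into $H^1(\map_*(B\Z^m,BT)_0)\otimes H^1(B\Z^m)$. So the task is to read off the coefficients $c^i_{i',j',j''}$ in
\[
  \omega^*(x_i)=\sum_{i',j',j''}c^i_{i',j',j''}\,y_{i'}^{j'}\times t_{j''}.
\]

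Next I would pull back along $\Theta\times 1$, a rational isomorphism by Lemma \ref{S^1_hom_eq} that by convention identifies the $y_i^j$ on both sides. The resulting map $\mu=\omega\circ(\Theta\times 1)\colon T^m\times B\Z^m\to BT$ is the adjoint of $\Theta$, sending $((a_1,\dots,a_m),z)\mapsto B\phi_{a_1,\dots,a_m}(z)$ for the homomorphism $\phi_{a_1,\dots,a_m}\colon\Z^m\to T$ with $e_j\mapsto a_j$. Because $B$ is additive on homomorphisms into an abelian group, the decomposition $\phi_{a_1,\dots,a_m}=\sum_j\phi_{a_j}\circ\mathrm{pr}_j^{\Z^m}$ gives
\[
  \mu=\sum_j\tilde\mu\circ(\mathrm{pr}_j^{T^m}\times\pi_j)
\]
as an $H$-sum of maps into the $H$-space $BT$, where $\tilde\mu\colon T\times B\Z\to BT$ is the $m=1$ version. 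Since each $x_i$ is primitive in the polynomial Hopf algebra $H^*(BT)$, this yields $\mu^*(x_i)=\sum_j(\mathrm{pr}_j^{T^m}\times\pi_j)^*\tilde\mu^*(x_i)$, which kills the off-diagonal terms $j'\ne j''$. A parallel decomposition $T=(S^1)^r$ with $x_i=(Bp_i)^*(x)$ for the $i$-th projection $p_i\colon T\to S^1$ gives $Bp_i\circ\tilde\mu=\tilde\mu_1\circ(p_i\times 1)$ and reduces everything to the single base case $\tilde\mu_1\colon S^1\times S^1\to BS^1$.

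The hard part, and the only genuinely nonformal input, will be this base case: $\tilde\mu_1^*(x)=v\times t\in H^2(S^1\times S^1)$. Here I would interpret $\tilde\mu_1$ as the adjoint of $\Theta\colon S^1\to\map_*(S^1,BS^1)_0\simeq\Omega BS^1\simeq S^1$, using the standard natural equivalence $G\simeq\Omega BG$ for a topological group $G$. Since $\Theta$ is a homotopy equivalence by Lemma \ref{S^1_hom_eq}, it acts on $\pi_1\cong\Z$ by degree $\pm 1$; under adjunction this degree is exactly the coefficient of $v\times t$ in $\tilde\mu_1^*(x)$, and the sign is pinned to $+1$ by the convention identifying the $y_i^j$ generators across $T^m$ and $\map_*(B\Z^m,BT)_0$. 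Feeding this back, $\mu^*(x_i)=\sum_j y_i^j\times t_j$, and undoing $\Theta\times 1$ yields $\omega^*(x_i)=\sum_j y_i^j\times t_j$, as stated.
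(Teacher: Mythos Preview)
The paper does not supply its own proof; the lemma is quoted from \cite{KTT} without argument, so there is no in-paper proof to compare against.

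Your argument is correct. The two reductions---first to $m=1$ via the H-space additivity of $B$ on homomorphisms into the abelian group $T$ together with the primitivity of $x_i\in H^*(BT)$, then to $r=1$ via naturality under the coordinate projections $T\to S^1$---are the natural structural moves, and the base case is handled correctly: the coefficient of $v\times t$ in $\tilde\mu_1^*(x)$ equals the degree of the adjoint $\Theta\colon S^1\to\Omega BS^1$ on $\pi_1$, which is $\pm 1$ since $\Theta$ is an equivalence, and the sign is pinned to $+1$ precisely because the paper defines the classes $y_i^j$ on $\map_*(B\Z^m,BT)_0$ as the images under $(\Theta^*)^{-1}$ of those on $T^m$. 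The only step that would benefit from an extra sentence is that the H-sum decomposition $\mu=\sum_j\tilde\mu\circ(\mathrm{pr}_j^{T^m}\times\pi_j)$ holds as a continuous family over $T^m$, not merely pointwise in $(a_j)$; this follows because $B(\phi+\psi)=B\phi+B\psi$ is witnessed through the explicit natural comparison $(Bp_1,Bp_2)\colon B(A\times A)\to BA\times BA$ in the Milnor model, so the required homotopies can be chosen naturally in the parameters. This is routine and does not affect the validity of your proof.
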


Now we prove the following proposition.
\begin{proposition}\label{theta_bigraded}
  The map $\Theta^*\colon H^*(\map_*(B\Z^m,BG)_0)\rightarrow H^*(\Hom(\Z^m,G)_0)$ is a bigraded algebra homomorphism, and $z_{i,I}$ is in the kernel of $\Theta$ for $|z_i|<2|I|$.
\end{proposition}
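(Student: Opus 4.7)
The plan is to compute $\Theta^*(z_{i,I})$ by pulling it back through the injection $\Phi^*$ of Theorem \ref{Thm_Baird} and invoking Lemma \ref{commutative_theta_phi}, which gives $\Phi^*\Theta^*(z_{i,I}) = (1\times \Theta)^*\widehat{\Phi}^*(z_{i,I})$. Since $\Theta^*$ is automatically a ring map and both bigraded structures are generated multiplicatively, it suffices to verify, for each generator $z_{i,I}$, that its $\Theta^*$-image lies in the prescribed bidegree $(|z_i|-2|I|,|I|)$ when $|z_i|\geq 2|I|$ and is zero when $|z_i|<2|I|$.

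To compute $\widehat{\Phi}^*(z_{i,I})$, write $z_i = f_i(x_1,\ldots,x_r)$ under the inclusion $H^*(BG)\hookrightarrow H^*(BT)$. The factorization $\phi\colon G/T\times BT\to EG/T\to BG$ combined with the homotopy equivalence $\iota^{-1}$ and Lemma \ref{phi_hat_cohomology} gives
\[
\phi^*(z_i) \;=\; f_i\bigl(x_1\times 1+1\times x_1,\ \ldots,\ x_r\times 1+1\times x_r\bigr)\in H^*(G/T\times BT).
\]
The identity $\omega_G\circ(\widehat{\Phi}\times 1) = \phi\circ(1\times \omega_T)$, which is immediate from $\widehat{\Phi}(g,f)(b)=\phi(g,f(b))$, combined with Lemma \ref{eval_torus} and the defining relation $\omega_G^*(z_i)=\sum_I z_{i,I}\times t_I$, yields
\[
\sum_I \widehat{\Phi}^*(z_{i,I})\times t_I \;=\; f_i(X_1,\ldots,X_r)
\]
in $H^*(G/T\times\map_*(B\Z^m,BT)_0\times B\Z^m)$, where $X_j := x_j + \sum_{k=1}^m y_j^k\, t_k$ with each summand pulled back from its respective factor.

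Equating coefficients of $t_I$ now determines $\widehat{\Phi}^*(z_{i,I})$: each monomial of $f_i$ is a product of $|z_i|/2$ factors of the substituted variable, and producing $t_I$ requires choosing $|I|$ of them to contribute $y_j^k t_k$ with distinct $k\in I$, the rest contributing $x_j$. Hence $\widehat{\Phi}^*(z_{i,I})$ has $H^*(G/T)$-degree $|z_i|-2|I|$ and $H^*(\map_*(B\Z^m,BT)_0)$-degree $|I|$ when $|z_i|\geq 2|I|$, and vanishes when $|z_i|<2|I|$ because no monomial supplies $|I|$ distinct $t_k$-contributions. Since $\Theta\colon T^m\to\map_*(B\Z^m,BT)_0$ is a degree-preserving homotopy equivalence by Lemma \ref{S^1_hom_eq}, the map $(1\times\Theta)^*$ respects the bigrading of $H^*(G/T\times T^m)$, and the conclusion follows from injectivity of $\Phi^*$ together with the fact that the bigrading on $H^*(\Hom(\Z^m,G)_0)$ is inherited by definition from that on $H^*(G/T\times T^m)$.

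The main obstacle is the three-fold tensor bookkeeping: verifying the evaluation-map compatibility $\omega_G\circ(\widehat{\Phi}\times 1)=\phi\circ(1\times \omega_T)$ under the factorization through $\phi$, and tracking which tensor factor each generator sits in. Once the displayed identity for $\widehat{\Phi}^*(z_{i,I})$ is in hand, the bidegree claim and the vanishing for $|z_i|<2|I|$ reduce to a routine degree count from the substitution $x_j\mapsto X_j$.
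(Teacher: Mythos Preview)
Your proof is correct and follows essentially the same route as the paper: both use the commutative square of Lemma \ref{commutative_theta_phi} to reduce to showing that $\widehat{\Phi}^*$ preserves the bigrading, and both verify this via the evaluation-map identity $\omega_G\circ(\widehat{\Phi}\times 1)=\phi\circ(1\times\omega_T)$ together with Lemmas \ref{phi_hat_cohomology} and \ref{eval_torus}. The only minor difference is presentational: the paper argues the vanishing for $|z_i|<2|I|$ by noting that the target has no component in negative $H^*(G/T)$-degree, whereas you phrase it combinatorially (a monomial of degree $|z_i|/2$ cannot supply $|I|$ distinct $t_k$-factors); these are equivalent observations.
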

\begin{proof}
  By Lemma \ref{commutative_theta_phi}, there is a commutative diagram
  \[
    \xymatrix{
      G/T\times T^m \ar^{\Phi}[r]   \ar_{1\times \Theta}[d] & \Hom(\Z^m,G)_0 \ar^{\Theta}[d]\\
      G/T\times \map_*(B\Z^m,BT)_0 \ar^-{\widehat{\Phi}}[r] & \map_*(B\Z^m,BG)_0.
    }
  \]
  By Lemma \ref{S^1_hom_eq}, the left map induces isomorphism in cohomology.
  The bidegree on $H^*(G/T\times \map_*(B\Z^m,BT)_0)$ can be defined by this isomorphism.
  By the definition of the bigraded algebra structure of $\Hom(\Z^m,G)_0$, $\Phi^*$ is an inclusion of bigraded algebras.
  Therefore if the bottom map $\widehat{\Phi}$ preserves the bigraded algebra structure, the left map $\Theta$ also preserves this.

  There is a commuting diagram
  \[
    \xymatrix{
      G/T\times \map_*(B\Z^m,BT)_0 \times B\Z^m \ar^-{\widehat{\Phi}\times 1}[r] \ar_{1\times \omega}[d] & \map_*(B\Z^m,BG)\times B\Z^m \ar^{\omega}[d]\\
      G/T \times BT \ar^{\phi}[r] &BG.\\
    }
  \]
  By Lemma \ref{phi_hat_cohomology} and \ref{eval_torus}, we obtain 
  \begin{align*}
    &(1\times \omega)^*\circ \phi^*(z_i)\\
    =&(1\times \omega)^*\circ \widehat{\phi}^*(f_i(x_1,\dots ,x_r))\\
    =&(1\times \omega)^*(f_i(x_1\times 1+1\times x_1,\dots ))\\
    =&f_i(x_1\times 1\times 1+1\times y_1^1\times t_1+\dots +1\times y_1^m\times t_m,\dots ),\\
  \end{align*}
  Since 
  \[
    x_1\times 1\times 1\in H^2(G/T)\otimes H^0(map_*(B\Z^m,BT)_0) \otimes H^0(B\Z^m)
  \] 
  and 
  \[
    1\times y_1^1\times t_1\in H^0(G/T)\otimes H^1(map_*(B\Z^m,BT)_0) \otimes H^1(B\Z^m),
  \]
  each term of $(1\times \omega)^*\circ \widehat{\phi}^*(z_i)$ satisfies that the degree of $H^*(\map_*(B\Z^m,BT)_0)$ component is same as of $H^*(B\Z^m)$ component, in other words $(1\times \omega)^*\circ \widehat{\phi}^*(z_i)$ is in 
  \[
    \bigoplus_{i,j} H^i(G/T)\otimes H^j(map_*(B\Z^m,BT)_0) \otimes H^j(B\Z^m).
  \]
  Since
  \begin{align*}
    &(\widehat{\Phi}\times 1)^*\circ \omega^*(z_i)\\
    =&(\widehat{\Phi}\times 1)^*\left(\sum_{\emptyset\ne I\subset \{1,\dots m\}}z_{i,I}\times t_I\right)\\
    =&\sum_{\emptyset\ne I\subset \{1,\dots m\}}\widehat{\Phi}(z_{i,I})\times t_I,\\
  \end{align*}
  and $t_I\in H^0(G/T)\otimes H^0(map_*(B\Z^m,BT)_0) \otimes H^{|I|}(B\Z^m)$, by the commuting diagram $\widehat{\Phi}(z_{i,I})$ is in $H^{|z_{i,I}|-2|I|}(G/T)\otimes H^{|I|}(T^m)$.

  For $|z_i|\geq 2|I|$ the bidegree of $\widehat{\Phi}(z_{i,I})$ is $(|z_i|-2|I|,|I|)$ in $H^{*}(G/T)\otimes H^{*}(T^m)$, and it is same as the bidegree of $z_{i,I}$.
  For $|z_i|<2|I|$, $\widehat{\Phi}(z_{i,I})$ must be $0$, because there is no $(|z_i|-2|I|,|I|)$ degree part in $H^{*}(G/T)\otimes H^{*}(T^m)$.
  So $\widehat{\Phi}$ preserves bidegree for all generators in $H^*(map_*(B\Z^m,BG)_0)$, and we obtain that $\theta$ is a bigraded algebra homomorphism.
  We also obtain $\Theta^*(z_{i,I})=0$ for $|z_i|<2|I|$ by the equation $\widehat{\Phi}(z_{i,I})=0$.
  By combining them, we obtain this theorem.
\end{proof}

Let $\mathcal{H}(G,m)=\langle z(i+1-|I|,I)\mid |z_i|<2|I|\rangle$ be the sub-bigraded algebra of $H^*(\map_*(B\Z^m,BG)_0)$ generated by $z_{i,I}$ with $|z_i|<2|I|$.

\begin{corollary}\label{surjective_condition}
  
  Let 
  \begin{align*}
    P(\Hom(\Z^m,G)_0,s,t)&=\sum a_{i,j}s^it^j\\
    P(\mathcal{H}(G,m),s,t)&=\sum b_{i,j}s^it^j.
  \end{align*}
  If $\Theta^*\colon H^*(\map_*(B\Z^m,BG)_0)\rightarrow H^*(\Hom(\Z^m,G)_0)$ is a surjection, then $a_{i,j}\leq b_{i,j}$ for all $i,j$.
\end{corollary}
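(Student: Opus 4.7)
The plan is to reduce the surjectivity condition on all of $H^*(\map_*(B\Z^m,BG)_0)$ to a surjectivity condition on the much smaller subalgebra $\mathcal{H}(G,m)$, and then compare bigraded dimensions. By Proposition \ref{generator_map_BG}, $H^*(\map_*(B\Z^m,BG)_0)$ is freely generated as a commutative graded algebra by the classes $z_{i,I}$, and by Proposition \ref{theta_bigraded} the map $\Theta^*$ is a bigraded algebra homomorphism that annihilates every generator $z_{i,I}$ with $|z_i|<2|I|$. Since $\Theta^*$ is multiplicative, any monomial containing such a factor is also annihilated, so the image of $\Theta^*$ coincides with the image of its restriction to $\mathcal{H}(G,m)$, i.e.\ the sub-bigraded algebra generated by the remaining classes $z_{i,I}$ (those with $|z_i|\geq 2|I|$, namely the generators not forced into $\ker\Theta^*$).

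Now I would assume that $\Theta^*$ is surjective. By the preceding step, the restriction $\Theta^*|_{\mathcal{H}(G,m)}\colon\mathcal{H}(G,m)\to H^*(\Hom(\Z^m,G)_0)$ is already surjective. Because $\Theta^*$ preserves bidegree, the restriction is a surjection of bigraded vector spaces, so in each bidegree $(i,j)$ we obtain a surjection $\mathcal{H}(G,m)^{i,j}\to H^{i,j}(\Hom(\Z^m,G)_0)$, which immediately yields $b_{i,j}\geq a_{i,j}$ as claimed.

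There is no substantial obstacle; the corollary is a pure dimension count once Propositions \ref{generator_map_BG} and \ref{theta_bigraded} are in hand. Its real use, exploited in Section \ref{sec:exceptional_cases}, is that both Hilbert-Poincar\'e series are explicitly computable — $P(\Hom(\Z^m,G)_0,s,t)$ via Theorem \ref{bigraded_Poincare_formula}, and $P(\mathcal{H}(G,m),s,t)$ from the free commutative structure of Proposition \ref{generator_map_BG} — so exhibiting a single bidegree with $a_{i,j}>b_{i,j}$ will suffice to rule out surjectivity of $\Theta^*$.
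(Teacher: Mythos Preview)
Your proof is correct and follows essentially the same approach as the paper: both argue that, by Propositions \ref{generator_map_BG} and \ref{theta_bigraded}, the map $\Theta^*$ factors through (equivalently, has the same image as its restriction to) $\mathcal{H}(G,m)$, so that surjectivity of $\Theta^*$ forces surjectivity of the bigraded map $\mathcal{H}(G,m)\to H^*(\Hom(\Z^m,G)_0)$, whence the dimension inequality $a_{i,j}\le b_{i,j}$. Your write-up is in fact slightly more explicit than the paper's about why the image of $\Theta^*$ agrees with that of its restriction to $\mathcal{H}(G,m)$ (namely, because any monomial containing a killed generator is itself killed), but the logical content is identical.
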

\begin{proof}
  By Proposition \ref{generator_map_BG} and \ref{theta_bigraded}, the map $\Theta^*$ factors as follows
  \[
    H^*(\map_*(B\Z^m,BG)_0)\rightarrow \mathcal{H}(G,m) \rightarrow H^*(\Hom(\Z^m,G)_0).
  \]
  If $\Theta^*$ is surjective, the second map must be surjection.
  The surjectivity of second map induces the inequation in the statement.
\end{proof}

\section{$F_4,E_6,E_7$ and $E_8$ case}\label{sec:exceptional_cases}
We will prove Theorem \ref{exceptional_non_surjective} by using the computation of Hilbert-Poincar\'e series of $\Hom(\Z^2,G)_1$ in Section \ref{sec:computation_of_Poincare}.
\begin{lemma}\label{Poincare_hom_key_term}
  About the Hilbert-Poincar\'e series of $\mathcal{H}(G,3)$, the followings hold.
  \begin{enumerate}
    \item The coefficient of $s^{18}t^3$ in $P(\mathcal{H}(F_4,3),s,t)$ is $19$.
    \item The coefficient of $s^{10}t^4$ in $P(\mathcal{H}(E_6,3),s,t)$ is $36$.
    \item The coefficient of $s^{6}t^3$ in $P(\mathcal{H}(E_7,3),s,t)$ is $1$.
    \item The coefficient of $s^{30}t^3$ in $P(\mathcal{H}(E_8,3),s,t)$ is $19$.
  \end{enumerate}
\end{lemma}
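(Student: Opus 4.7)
The plan is to compute $P(\mathcal{H}(G,3),s,t)$ explicitly as a product and extract the stated coefficients by enumeration. Combining Propositions~\ref{generator_map_BG} and~\ref{theta_bigraded} with the factorization in the proof of Corollary~\ref{surjective_condition}, the algebra $\mathcal{H}(G,3)$ is the free commutative bigraded algebra on those classes $z_{i,I}$ that are not automatically sent to zero by $\Theta^*$, namely those with $|z_i|\geq 2|I|$; the generator $z_{i,I}$ sits in bidegree $(|z_i|-2|I|,\,|I|)$ and its total degree $|z_i|-|I|$ has the same parity as $|I|$ (since every $|z_i|$ is even).

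For $m=3$, the non-empty subsets $I\subset\{1,2,3\}$ split by size: three of size $1$ (odd total degree, hence exterior generators), three of size $2$ (even, polynomial), and one of size $3$ (odd, exterior). Writing $d_1,\dots,d_r$ for the characteristic degrees of the Weyl group of $G$, this gives
\[
P(\mathcal{H}(G,3),s,t)=\prod_{i=1}^{r}(1+s^{2d_i-2}t)^{3}\;\prod_{\substack{1\leq i\leq r\\ d_i\geq 2}}\frac{1}{(1-s^{2d_i-4}t^{2})^{3}}\;\prod_{\substack{1\leq i\leq r\\ d_i\geq 3}}(1+s^{2d_i-6}t^{3}).
\]
I would then substitute the standard characteristic degrees of $W(F_4)$, $W(E_6)$, $W(E_7)$, $W(E_8)$ and extract the coefficient of $s^at^b$ in each case.

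To do the extraction, I would partition the $t$-exponent $b$ into parts from $\{1,2,3\}$ (so $3=1+1+1=1+2=3$ for $b=3$, and $4=1+1+1+1=1+1+2=2+2=1+3$ for $b=4$) and, for each partition, enumerate the tuples of characteristic degrees whose scaled sum matches $a$, remembering that exterior generators sharing the same $d_i$ must differ in their subset label. The main obstacle is careful bookkeeping: the $E_6$ case at bidegree $(10,4)$ admits the most partitions, each with several admissible tuples of characteristic degrees, and the $E_8$ case at $(30,3)$ requires searching a long list of degrees for triples summing to the right value. However, since every $t$-exponent in the statement is at most $4$, the number of partitions and of admissible tuples of characteristic degrees is small in each case, and the whole computation reduces to a finite combinatorial check that can be carried out by hand.
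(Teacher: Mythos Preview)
Your proposal is correct and follows essentially the same approach as the paper: the paper also writes $P(\mathcal{H}(G,3),s,t)$ as the product of exterior factors for $|I|=1,3$ and polynomial factors for $|I|=2$ (displaying the $F_4$ case explicitly and dismissing the others with ``by similar calculation''), and your partition-by-$t$-exponent method is exactly how one reads off the desired coefficients from that product. Your formulation in terms of the characteristic degrees $d_i$ is a clean way to state the product uniformly for all four groups.
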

\begin{proof}
  $H^*(BF_4)$ is generated by $4$ generators $z_1,z_2,z_3,z_4$ with $|z_1|=4,|z_2|=12,|z_3|=16,|z_4|=20$.
  Since $\mathcal{H}(F_4,3)$ is free graded algebra generated by 
  \begin{align*}
    \{&z_{1,\{1\}},z_{1,\{2\}},z_{1,\{3\}},z_{2,\{1\}},z_{2,\{2\}},z_{2,\{3\}},z_{3,\{1\}},z_{3,\{2\}},z_{3,\{3\}},z_{4,\{1\}},z_{4,\{2\}},z_{4,\{3\}},\\
    &z_{1,\{1,2\}},z_{1,\{1,3\}},z_{1,\{2,3\}},z_{2,\{1,2\}},z_{2,\{1,3\}},z_{2,\{2,3\}},z_{3,\{1,2\}},z_{3,\{1,3\}},z_{3,\{2,3\}},z_{4,\{1,2\}},\\
    &z_{4,\{1,3\}},z_{4,\{2,3\}},z_{2,\{1,2,3\}},z_{3,\{1,2,3\}},z_{4,\{1,2,3\}}\},
  \end{align*}
  we obtain 
  \begin{align*}
    &P(\mathcal{H}(F_4,3),s,t)\\
    =&(1+s^2t)^3(1+s^{10}t)^3(1+s^{14}t)^3(1+s^{22}t)^3\\
    \times&\left(\frac{1}{1-t^2}\right)^3\left(\frac{1}{1-s^{8}t^2}\right)^3\left(\frac{1}{1-s^{12}t^2}\right)^3\left(\frac{1}{1-s^{20}t^2}\right)^3\\
    \times&(1+s^{6}t^3)(1+s^{10}t^3)(1+s^{18}t^3).
  \end{align*}
  Thus the coefficient of $s^{18}t^3$ is $19$.

  By similar calculation, we obtain the other cases.
\end{proof}

On the other hands, by looking at the underlined part in the Hilbert-Poincar\'e series of $\Hom(\Z^3,G)_0$ in Section \ref{sec:computation_of_Poincare}, we obtain the following lemma.
\begin{lemma}\label{Poincare_map_key_term}
  About the Hilbert-Poincar\'e series of $\Hom(\Z^3,G)_1$, the followings hold.
  \begin{enumerate}
    \item The coefficient of $s^{18}t^3$ in $P(\Hom(\Z^3,F_4)_1,s,t)$ is $20$.
    \item The coefficient of $s^{10}t^4$ in $P(\Hom(\Z^3,E_6)_1,s,t)$ is $39$.
    \item The coefficient of $s^{6}t^3$ in $P(\Hom(\Z^3,E_7)_1,s,t)$ is $2$.
    \item The coefficient of $s^{30}t^3$ in $P(\Hom(\Z^3,E_8)_1,s,t)$ is $20$.
  \end{enumerate}
\end{lemma}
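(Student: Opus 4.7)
The plan is to apply Theorem \ref{bigraded_Poincare_formula} to each of $G = F_4, E_6, E_7, E_8$ with $m = 3$, extract the single coefficient of interest in each case, and record just that coefficient. Since $\Hom(\Z^3,G)_0$ is a quotient of $H^*(G/T\times T^3)^W$, everything rests on the character-theoretic formula
\[
P(\Hom(\Z^3,G)_0,s,t)=\frac{1}{|W|}\prod_{i=1}^{r}(1-s^{d_i})\sum_{w\in W}\frac{(\det(1+tw))^3}{\det(1-s^2w)},
\]
where the determinants are evaluated in the reflection representation. The key structural point is that both $\det(1+tw)$ and $\det(1-s^2w)$ are class functions of $w$, since they depend only on the eigenvalues of $w$ on the reflection representation. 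So the sum collapses to a sum over conjugacy classes of $W$, weighted by class size.

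First I would assemble, for each exceptional Weyl group, the data $(|C|, \chi_C(x))$ where $C$ ranges over conjugacy classes, $|C|$ is the size of $C$, and $\chi_C(x) = \det(1 - xw)$ for a representative $w \in C$, computed on the reflection representation. These characteristic polynomials are standard and widely tabulated (Carter's classification labels conjugacy classes in $W(E_6), W(E_7), W(E_8)$ by admissible diagrams, and one reads off the eigenvalues directly). Once the table is in place, the Hilbert--Poincar\'e series becomes
\[
\frac{1}{|W|}\prod_{i=1}^{r}(1-s^{d_i})\sum_{C}|C|\,\frac{\bigl(t^r\chi_C(-1/t)\bigr)^3}{\chi_C(s^2)},
\]
which is now a finite (and comparatively short) sum of rational functions. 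Expanding this as a power series in $s$ and $t$ up to the required total degree and extracting the coefficient of $s^{18}t^3$, $s^{10}t^4$, $s^6 t^3$, $s^{30}t^3$ respectively gives the four numerical claims.

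The main obstacle is sheer computational bulk: $|W(E_7)| = 2{,}903{,}040$ and $|W(E_8)| = 696{,}729{,}600$, so even after reducing to conjugacy classes ($60$ classes for $W(E_7)$, $112$ classes for $W(E_8)$) one still has a large rational-function sum to clear denominators on and truncate. The way I would organize it is to treat each denominator $\chi_C(s^2) = \prod_j (1 - \zeta_j s^2)$ by partial-fractioning against the fixed numerator $\prod_i (1-s^{d_i})$, so that after cancellation of the $W$-invariant factor each term is a polynomial in $s$ of bounded degree times $(t^r \chi_C(-1/t))^3$. Since we only need individual coefficients (not the full series), I would truncate each Taylor expansion in $s$ at the required degree before summing. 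In practice this is best delegated to a computer algebra system, which is presumably how the tables in Section \ref{sec:computation_of_Poincare} were produced; the lemma then amounts to reading off the four specified coefficients from those tables.
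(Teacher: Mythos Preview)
Your proposal is correct and takes essentially the same approach as the paper: the paper simply states that the lemma follows by reading off the underlined coefficients from the full Hilbert--Poincar\'e series recorded in Section~\ref{sec:computation_of_Poincare}, which were computed from Theorem~\ref{bigraded_Poincare_formula} with computer assistance. Your elaboration on reducing the sum to conjugacy classes and truncating before extracting coefficients is a reasonable description of how such a computation would actually be organized, but the paper itself offers no further detail beyond the computer-assisted application of the formula.
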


\begin{proof}[Proof of Theorem \ref{exceptional_non_surjective}]
  By combining the Corollary \ref{surjective_condition} and Lemma \ref{Poincare_hom_key_term} and \ref{Poincare_map_key_term}, we obtain this.
\end{proof}


\section{$G_2$ case}\label{sec:G_2_case}
In this section we consider the case $G=G_2$.
To compute $H^*(\Hom(\Z^m,G_2))$, we will use notation of $SU(3)\subset G_2$.
Since the rank of $SU(3)$ and $G_2$ are two, a maximal torus of $SU(3)$ is a maximal torus of $G_2$.
We take a maximal torus of $SU(3)$, $T$, as a restriction of a maximal torus of $U(3)$.
Then the cohomology of the maximal torus of $SU(3)$ and its classifying space can be defined as follows
\begin{align*}
  H^*(T^m)&\cong \bigotimes_{1\leq j\leq m} \Lambda(\alpha_j, \beta_j, \gamma_j)/K_j\\
  H^*(BT)&\cong \Q[x,y,w]/(x+y+w),
\end{align*}
where $K_j=(\alpha_j+\beta_j+\gamma_j)$.
In previous section the generators of $H^*(T^m)$ and $H^*(BT)$ are denoted by $y_j^i$ and $x_i$, but after here we will use these notations for visibility.

The Weyl group of $G_2$ is the dihedral group $D_6=\langle a,b \mid a^6=b^2=abab=1 \rangle$.
The subgroup generated by $a^2,b$ is the Weyl groups of $SU(3)$, $\mathfrak{S}_3$, and $D_6$ is generated by $\mathfrak{S}_3$ and $a^3$.
The action of $a^3$ on $H^1(T^m)$ and $H^2(BT)$ are the map that multiplies by $-1$.

Let $H^*(BT)=\Q[x,y,w]/(x+y+w)$ and we regard $H^*(BG_2)=H^*(BT)^{D_6}\subset H^*(BT)$ by the inclusion $BT\rightarrow BG_2$.
It is well known that $H^*(BG_2)$ is a polynomial ring, and by comparing $H^*(BSU(3))$ it is easy to see that $x^2+y^2+w^2$ and $x^2y^2w^2$ are the generators of $H^*(BG_2)$.
Since there is an equation 
\[
  x^6+y^6+w^6=3x^2y^2w^2+(x^2+y^2+w^2)Q(x,y,w)
\]
for a polynomial $Q(x,y,w)$, the following lemma holds.
\begin{lemma}\label{BG_2_generator}
  Let $z_1=x^2+y^2+w^2,z_2=x^6+y^6+w^6$.
  Then $H^*(BG_2)$ is the polynomial ring generated by $z_1,z_2$.
\end{lemma}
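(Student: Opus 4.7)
The plan is to leverage what the excerpt already establishes, namely that $H^*(BG_2)=H^*(BT)^{D_6}$ equals $\Q[z_1,\,x^2y^2w^2]$ as a polynomial ring in two generators, and then show that $(z_1,z_2)$ generates the same subalgebra of $H^*(BT)=\Q[x,y,w]/(x+y+w)$ as $(z_1,\,x^2y^2w^2)$.

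To do this I would make the displayed identity $x^6+y^6+w^6=3x^2y^2w^2+z_1\,Q(x,y,w)$ completely explicit. Write $e_2=xy+yw+wx$, $e_3=xyw$, so that in the quotient ring (where $e_1=x+y+w=0$) Newton's identities reduce to $p_2=-2e_2$ and $p_6=-2e_2^3+3e_3^2$. Since $z_1=p_2=-2e_2$, one has $e_2=-z_1/2$, and $e_3^2=x^2y^2w^2$, hence
\[
z_2 \;=\; \tfrac{1}{4}\,z_1^{\,3}\;+\;3\,x^2y^2w^2,
\]
so $Q=\tfrac14 z_1^2$. Solving for $x^2y^2w^2=\tfrac{1}{3}\bigl(z_2-\tfrac14 z_1^3\bigr)$ shows $x^2y^2w^2\in\Q[z_1,z_2]$, while conversely $z_2\in\Q[z_1,\,x^2y^2w^2]$; so $\Q[z_1,z_2]=\Q[z_1,\,x^2y^2w^2]=H^*(BG_2)$. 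Since the ambient ring is a polynomial ring in two generators and $z_1,z_2$ generate it, they are algebraically independent (this can also be seen directly from the formula, as the substitution $x^2y^2w^2\mapsto\tfrac13(z_2-\tfrac14 z_1^3)$ is triangular in the two free generators).

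There is essentially no obstacle: the only substantive step is computing $p_6$ modulo $e_1=0$, which Newton's identities handle in a few lines. As an alternative avoiding power-sum formulas, one can argue purely structurally: $z_2-3x^2y^2w^2$ is $D_6$-invariant and divisible by $z_1$ in the polynomial ring, the quotient $Q$ is then automatically $D_6$-invariant (as $z_1$ is not a zero divisor), and since $Q$ has polynomial degree $4$ it must lie in $\Q\cdot z_1^2$ inside $\Q[z_1,\,x^2y^2w^2]$; the scalar is then pinned down by evaluating at one convenient point such as $(x,y,w)=(1,-1,0)$.
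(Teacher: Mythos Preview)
Your proof is correct and takes essentially the same approach as the paper: both start from the known presentation $H^*(BG_2)=\Q[z_1,\,x^2y^2w^2]$ and use the relation $x^6+y^6+w^6=3x^2y^2w^2+z_1\,Q$ to swap $x^2y^2w^2$ out for $z_2$. The only difference is that you compute $Q=\tfrac14 z_1^2$ explicitly via Newton's identities, whereas the paper is content to assert the existence of such a $Q$.
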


Let 
\[
  {\mathcal{K}(m)}=\Q[x,y,w]/J\otimes \left(\bigotimes_{1\leq j\leq m} \Lambda(\alpha_j, \beta_j, \gamma_j)/K_j\right),
\]
where $J=(x+y+w,x^2+y^2+w^2,x^6+y^6+w^6)$.
Since there is an isomorphism 
  \[
    H^*(G_2/T)\cong \Q[x,y,w]/J,
  \]
by Theorem \ref{Thm_Baird} the following statement holds.
\begin{lemma}
  There is a isomorphism
  \[
    H^*(\Hom(\Z^m,G_2))\cong{\mathcal{K}(m)}^{D_6}.
  \]
\end{lemma}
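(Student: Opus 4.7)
The plan is to obtain the lemma as a direct consequence of Theorem \ref{Thm_Baird} once the two presentations set up in the paragraphs above are assembled. Applying Theorem \ref{Thm_Baird} with $G = G_2$ identifies $H^*(\Hom(\Z^m,G_2)_0)$ with the Weyl-invariant subring $H^*(G_2/T \times T^m)^W$, and for $G_2$ the Weyl group is $W = D_6$. By the Künneth theorem this becomes $\bigl(H^*(G_2/T) \otimes H^*(T^m)\bigr)^{D_6}$, so the lemma reduces to a graded-algebra identification of the tensor product with $\mathcal{K}(m)$.

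Next I would splice in the two presentations already recorded. For the torus factor, the description $H^*(T^m) \cong \bigotimes_{1\le j\le m} \Lambda(\alpha_j,\beta_j,\gamma_j)/K_j$ is part of the conventions fixed just before the lemma, arising from viewing the maximal torus of $SU(3)$ as the restriction of a maximal torus of $U(3)$ and applying Künneth $m$ times. For the flag variety factor, the paper records $H^*(G_2/T) \cong \Q[x,y,w]/J$ with $J = (x+y+w,\ x^2+y^2+w^2,\ x^6+y^6+w^6)$; this in turn follows from the Borel description $H^*(G/T;\Q) \cong H^*(BT;\Q)/(H^{>0}(BG;\Q))$ applied to $G_2$, combined with Lemma \ref{BG_2_generator} that fixes the polynomial generators $z_1,z_2$ of $H^*(BG_2)$. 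Tensoring these two presentations produces $\mathcal{K}(m)$ on the nose as a bigraded algebra.

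The only verification that remains is that this algebra isomorphism is $D_6$-equivariant, since then taking $D_6$-invariants on both sides completes the proof. For this I would use the explicit action of $D_6$ recorded just above the lemma: the subgroup $\mathfrak{S}_3 \subset D_6$ permutes the coordinate characters $(x,y,w)$ of $H^2(BT)$ and simultaneously $(\alpha_j,\beta_j,\gamma_j)$ for each $j$, while the extra generator $a^3$ acts by negation on $H^2(BT)$ and on $H^1(T^m)$. Both actions on the tensor product $\Q[x,y,w]/J \otimes \bigotimes_j \Lambda(\alpha_j,\beta_j,\gamma_j)/K_j$ coincide with the ones induced on $H^*(G_2/T)\otimes H^*(T^m)$ by the Weyl group action on $T$, because the presentations are defined in terms of the very same basis of characters. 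The main potential obstacle is merely bookkeeping; the substantive input, namely identification of the generators of $H^*(BG_2)$ inside $H^*(BT)$, has already been handled by Lemma \ref{BG_2_generator}.
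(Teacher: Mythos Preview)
Your proposal is correct and follows essentially the same route as the paper: the lemma is stated there as an immediate consequence of Theorem \ref{Thm_Baird} together with the presentations of $H^*(T^m)$ and $H^*(G_2/T)\cong\Q[x,y,w]/J$ recorded just before it. Your write-up simply spells out the K\"unneth identification and the $D_6$-equivariance check that the paper leaves implicit.
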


For $d>0$, let 
\[
  z(d+1,I)=x^{d}\alpha_I+y^{d}\beta_I+w^{d}\gamma_I\in {\mathcal{K}(m)}.
\]
By the action of $D_6$, $z(d+1,I)\in {\mathcal{K}(m)}^{D_6}$ if and only if $d+|I|$ is odd.
To prove Theorem \ref{G_2_surjective} we need the next theorem.
\begin{theorem}\label{G_2_generator}
  $H^*(\Hom(\Z^m,G_2))$ is generated by 
  \[
    \{z(3-|I|,I)\mid I\subset \{1,\dots m\},|I|\leq 2\}\cup \{z(7-|I|,I)\mid I\subset \{1,\dots m\},|I|\leq 6\}
  \]
\end{theorem}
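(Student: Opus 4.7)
The plan is to work throughout in the Baird model $H^*(\Hom(\Z^m,G_2))\cong \mathcal{K}(m)^{D_6}$ and establish the generation statement in four steps.

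First, I exploit the product structure of the Weyl group. Since $a^3$ is central in $D_6$ and acts as $-\mathrm{id}$ on the rank-two reflection representation, one has $D_6\cong \mathfrak{S}_3\times \langle a^3\rangle$, and so
\[
\mathcal{K}(m)^{D_6}=\bigl(\mathcal{K}(m)^{\mathfrak{S}_3}\bigr)^{a^3},
\]
where $a^3$ rescales $z(d+1,I)$ by $(-1)^{d+|I|}$. The $D_6$-invariant classes among the $z(d+1,I)$ are therefore exactly those of the stated parity of $d+|I|$.

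Second, I produce a spanning family for $\mathcal{K}(m)^{\mathfrak{S}_3}$. Viewing $\mathcal{K}(m)$ as the diagonal super coordinate ring of three labeled points, each carrying one polynomial variable ($x$, $y$, $w$) and $m$ exterior variables ($\alpha_j$, $\beta_j$, $\gamma_j$), with $\mathfrak{S}_3$ permuting the labels, the first fundamental theorem of invariant theory for $\mathfrak{S}_3$ in its super/polarization form (valid in characteristic zero) tells us that $\mathcal{K}(m)^{\mathfrak{S}_3}$ is generated as a commutative graded algebra by the orbit sums $z(d+1,I)=x^d\alpha_I+y^d\beta_I+w^d\gamma_I$ for all $d\ge 0$ and $I\subset\{1,\dots,m\}$. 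Combined with step one, $\mathcal{K}(m)^{D_6}$ is generated as a ring by those $z(d+1,I)$ of the correct parity.

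Third, I reduce this infinite generating family to the finite set in the theorem, using the relations $J=(p_1,p_2,p_6)$ and the $K_j$. For the pure polynomial part ($I=\emptyset$), Newton's identities modulo $(p_1,p_2)$ give $e_1=0$, $e_2=0$, $e_3=p_3/3$ and the recursion $p_d=e_3\,p_{d-3}$ for $d\ge 4$; combined with $p_6=0$ this forces $p_3^2=0$ and $p_d=0$ for all $d\ge 4$. For the mixed case I proceed by double induction on the weight $d+|I|$ and on $|I|$: any $z(d+1,I)$ of weight outside $\{2,6\}$ is rewritten as a polynomial in the listed generators plus strictly smaller-weight $z(e+1,I')$. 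The inductive step multiplies a weight-$2$ or weight-$6$ listed generator by a lower-weight one and reads off $z(d+1,I)$ from an identity of the shape
\[
p_d\cdot z(d'+1,I)\;=\;z(d+d'+1,I)\;+\;\sum_{I'\subsetneq I}(\cdots)\,z(e+1,I'),
\]
whose cross terms sit in strictly smaller $|I|$ and are therefore already controlled by induction.

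As a terminal consistency check I will match the Hilbert-Poincar\'e series of the free bigraded commutative algebra on the listed generators, modulo the relations emerging in step three, against the closed formula $P(\Hom(\Z^m,G_2),s,t)$ from Theorem \ref{bigraded_Poincare_formula}; equality of the two series confirms no generator has been overlooked. The main obstacle will be step three, specifically the mixed reduction: every multiplicative move introduces cross terms indexed by proper subsets $I'\subsetneq I$ and by lower powers of the polynomial variables, so closing the double induction requires careful combinatorial bookkeeping. The appearance of precisely the two weights $2$ and $6$ reflects the degrees $4=2\cdot 2$ and $12=2\cdot 6$ of the polynomial generators $z_1,z_2$ of $H^*(BG_2)$ (Lemma \ref{BG_2_generator}); making this correspondence rigorous inside the reduction is the technical heart of Section \ref{sec:G_2_generator}.
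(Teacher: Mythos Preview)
Your first two steps are broadly sound, but the conclusion you draw at the end of step two is not correct: knowing that $\mathcal{K}(m)^{\mathfrak{S}_3}$ is generated by the $z(d+1,I)$ and that $a^3$ scales each of these by $(-1)^{d+|I|}$ does \emph{not} imply that $\mathcal{K}(m)^{D_6}$ is generated by the even-parity $z$'s alone. One must also throw in all products $z(i,I)\,z(j,J)$ of two odd-parity classes (this is exactly Lemma~\ref{D6_generator}), and expressing those products in terms of the listed weight-$2$ and weight-$6$ generators is where almost all of the work lies.

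The central gap is step three. The displayed identity is simply false: expanding $p_d\cdot z(d'+1,I)$ gives $z(d+d'+1,I)$ plus cross terms of the form $u^{d}v^{d'}\xi_I^{(v)}$ with $u\neq v$, and these carry the \emph{same} index set $I$, not a proper subset $I'\subsetneq I$. Worse, the only power sums available as listed generators are $p_2=z(3,\emptyset)$ and $p_6=z(7,\emptyset)$, and both lie in $J$ and hence vanish in $\mathcal{K}(m)$; the sole surviving $p_d$ is $p_3=z(4,\emptyset)$, which is odd under $a^3$ and not among the generators. So the multiplicative move you describe is either vacuous or fails to close the induction. The paper's actual argument reflects that no clean induction is available: after using explicit relations (Lemma~\ref{relation_between_z_G2}) and the known $m\le 2$ case (Theorem~\ref{Theorem_2G2_generator}) to cut the problem down to a finite list of eighteen specific elements (Lemma~\ref{basis_to_check}), it computes the Hilbert--Poincar\'e series of the filtration quotients $\mathcal{K}(m)^{D_6}/F_{m-1}\mathcal{K}(m)^{D_6}$ for $m\le 6$ (Lemma~\ref{Poincare_G2_filter}) to learn the exact dimension of thirteen particular bigraded pieces, and then verifies by hand, via explicit linear-algebra computations in each of those thirteen cases (Lemma~\ref{other_cases_generator}), that products of the listed generators span. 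The Poincar\'e series is thus not a terminal consistency check but the engine that certifies spanning; without those dimension counts, and without the odd-pair products you omitted, your inductive scheme cannot get off the ground.
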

To prove this theorem, much calculation is needed.
Thus we will prove this theorem in the next section, and in this section we will prove Theorem \ref{G_2_surjective} by assuming this theorem.

\begin{proof}[Proof of Theorem \ref{G_2_surjective}]
  By Lemma \ref{BG_2_generator} $z_1=x^2+y^2+w^2$ and $z_2=x^6+y^6+w^6$ are generators of $H^*(BG_2)$.
  By Lemma \ref{phi_hat_cohomology},
  \begin{align*}
    &(1\times \omega)^*\phi^*(z_2)\\
    =&(1\times \omega)^*((x\times 1+1\times x)^6+(y\times 1+1\times y)^6+(w\times 1+1\times w)^6)\\
    =&\left(x\times 1+1\times \left(\sum_{i=1}^{m}\alpha_i \times t_i\right)\right)^6+\left(y\times 1+1\times \left(\sum_{i=1}^{m}\beta_i \times t_i\right)\right)^6\\
    +&\left(w\times 1+1\times \left(\sum_{i=1}^{m}\gamma_i \times t_i\right)\right)^6\\
    =&\sum_{I\subset \{1,\dots ,m\}}\frac{6!}{(6-|I|)!}(x^{6-|I|}\alpha_I+y^{6-|I|}\beta_I+w^{6-|I|}\gamma_I)t_{I}\\
    =&\sum_{I\subset \{1,\dots ,m\}}\frac{6!}{(6-|I|)!}z(7-|I|,I)t_{I}
  \end{align*}
  and similarly
  \begin{align*}
    &(1\times \omega)^*\phi^*(z_1)\\
    =&\sum_{I\subset \{1,\dots ,m\}}\frac{2!}{(2-|I|)!}z(3-|I|,I)t_{I}.
  \end{align*}
  By the commutative diagram
  \[
    \xymatrix{
      G_2/T\times \map_*(B\Z^m,BT)_0 \times B\Z^m \ar^-{\widehat{\Phi}\times 1}[r] \ar_{1\times \omega}[d] & \map_*(B\Z^m,BG_2)\times B\Z^m \ar^{\omega}[d]\\
      G_2/T \times BT \ar^{\phi}[r] &BG_2,\\
    }
  \]
  we obtain
  \begin{align*}
    \widehat{\Phi}^*(z_{1,I})=&\frac{2!}{(2-|I|)!}z(3-|I|,I)\\
    \widehat{\Phi}^*(z_{2,I})=&\frac{6!}{(6-|I|)!}z(7-|I|,I).\\
  \end{align*}
  By Theorem \ref{G_2_generator} and the following commuting diagram,
  \[
    \xymatrix{
      G_2/T\times T^m \ar^{\Phi}[r] \ar_{1\times \Theta}[d]  & \Hom(\Z^m,G_2)_0 \ar^{\Theta}[d]\\
      G_2/T\times \map_*(B\Z^m,BT)_0 \ar^-{\widehat{\Phi}}[r] &  \map_*(B\Z^m,BG_2)_0,
    }
  \]
  $\Theta^*(z_{1,I}),\Theta^*(z_{2,I})$ correspond to the generators of $H^*(\Hom(\Z^m,G_2)_0)$.
  Complete the proof.
\end{proof}

\section{Proof of Theorem \ref{G_2_generator}}\label{sec:G_2_generator}

In this section we will prove Theorem \ref{G_2_generator}.
To prove this we will consider the map ${\mathcal{K}(m)}^{D_6}\rightarrow {\mathcal{K}(m)}^{\mathfrak{S}_3}$.
\begin{lemma}\label{S_3_generator}
  ${\mathcal{K}(m)}^{\mathfrak{S}_3}$ is generated by 
  \[
    \{z(i,I)\mid  1\leq i\leq 6,I\subset \{1,\dots m\},|I|\leq 3\}.
  \]
\end{lemma}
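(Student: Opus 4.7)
The plan is a polarization argument. The key observation is that each generator $z(i,I)$ arises as a polarization of the Newton power sum $p_k = x^k + y^k + w^k$: introducing formal odd (anticommuting) parameters $t_1,\ldots,t_m$ of degree $1$, the $\mathfrak{S}_3$-invariant expression
\[
P_k = \bigl(x + \textstyle\sum_j \alpha_j t_j\bigr)^k + \bigl(y + \textstyle\sum_j \beta_j t_j\bigr)^k + \bigl(w + \textstyle\sum_j \gamma_j t_j\bigr)^k
\]
has $t_I$-coefficient equal to $\tfrac{k!}{(k-|I|)!}\,z(k{+}1{-}|I|, I)$. Thus the lemma is equivalent to the statement that polarized Newton power sums generate all $\mathfrak{S}_3$-invariants of $\mathcal{K}(m)$.

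I would first establish this in the unreduced algebra $\tilde{\mathcal{K}}(m) = \Q[x,y,w] \otimes \bigotimes_j V_j$. Every $\mathfrak{S}_3$-invariant is an $\mathfrak{S}_3$-average of a monomial $x^a y^b w^c \prod_j v_j$ with each $v_j \in \{1, \alpha_j, \beta_j, \gamma_j, \alpha_j\beta_j\}$. When every $v_j$ has exterior degree $\le 1$, this average is literally a polarized power sum. When some $v_j = \alpha_j\beta_j$ (which transforms in the sign representation of $\mathfrak{S}_3$), the invariant should be realized as a supercommutative product of two $z(i, I)$'s sharing the index $j$: expanding $z(i_1, \{j\})\cdot z(i_2, \{j\})$ produces sign-type polynomials in $x, y, w$ paired with $\alpha_j\beta_j$, and by varying the indices such products span the needed sign subspace. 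The bound $|I|\le3$ is inherent: the relation $\alpha_j + \beta_j + \gamma_j = 0$ combined with the fact that $\mathfrak{S}_3$ acts on only three base variables $x,y,w$ forces dependencies expressing $z(i, I)$ with $|I|\ge 4$ as products of $z(i', I')$ with $|I'|\le 3$ modulo lower-$|I|$ corrections.

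I would then descend to the quotient $\mathcal{K}(m) = \tilde{\mathcal{K}}(m)/J$ to truncate the $i$-range. A direct calculation in $\Q[x,y,w]/(p_1, p_2)$: setting $w = -x-y$ reduces the remaining relations to $x^2+xy+y^2 \equiv 0$, from which one computes $x^3 \equiv y^3$ and $x^6 \equiv y^6 \equiv w^6$, so $p_6 \equiv 3 y^6$. Adjoining $p_6 = 0$ therefore forces $y^6 = 0$, and by $\mathfrak{S}_3$-symmetry $x^k = y^k = w^k = 0$ for all $k \ge 6$ in $\Q[x,y,w]/J$. Hence every summand of $z(i, I)$ vanishes for $i \ge 7$, which cuts the generating set down to $i \le 6$.

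The main obstacle is the mixed-exterior case in the first stage, namely exhibiting every sign-type invariant (where some $v_j = \alpha_j\beta_j$) as a polynomial in the polarized power sums. Rigorously verifying this requires a careful combinatorial analysis of products of $z(i, I)$'s and the sign-representation content they produce; in particular one must check that varying the pairs $(i_1, i_2)$ and combining with polarizations at other indices genuinely exhausts the sign subspace in each bidegree. A cleaner, alternative strategy is to compute the Hilbert--Poincar\'e series of $\tilde{\mathcal{K}}(m)^{\mathfrak{S}_3}$ via Molien's formula and compare with the Hilbert--Poincar\'e series of the subalgebra explicitly generated by the claimed $z(i, I)$; agreement of these series would establish the generation statement without any case analysis.
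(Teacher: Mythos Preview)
Your proposal is a sketch with acknowledged gaps rather than a proof, and several of its claimed steps do not hold as stated.

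First, the assertion that the $\mathfrak{S}_3$-average of a monomial $x^ay^bw^c\prod_j v_j$ with each $v_j\in\{1,\alpha_j,\beta_j,\gamma_j\}$ is ``literally a polarized power sum'' is false. Already with no exterior factors the average of $x^2y$ is the monomial symmetric function $m_{21}(x,y,w)$, not a power sum; with exterior factors, the average of $x^2y\,\alpha_1$ is $x^2(y{+}w)\alpha_1+y^2(x{+}w)\beta_1+w^2(x{+}y)\gamma_1$, which is not any single $z(i,I)$ in the unreduced algebra. What is true (and what you really need) is that such averages lie in the subring generated by the $z(i,I)$, but this is precisely the content of the lemma and requires an argument. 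The paper supplies it by the single identity
\[
z(i{+}1,I)\,z(j{+}1,J)\;=\;\rho(x^iy^j\alpha_I\beta_J)\;+\;\bigl(x^{i+j}\alpha_I\alpha_J+y^{i+j}\beta_I\beta_J+w^{i+j}\gamma_I\gamma_J\bigr),
\]
which immediately expresses $\rho(x^iy^j\alpha_I\beta_J)$ as $z(i{+}1,I)z(j{+}1,J)\pm z(i{+}j{+}1,I\cup J)$, and then iterates once more for three factors. This two-line inclusion--exclusion replaces both your ``aligned'' case and your entire sign-representation discussion at once: no separate analysis of $\alpha_j\beta_j$ is needed, because $\rho$ of any monomial with $\le 3$ nonzero exponents among $x,y,w$ is already handled.

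Second, your justification for $|I|\le 3$ (``the relation $\alpha_j+\beta_j+\gamma_j=0$ combined with three base variables forces dependencies'') is not an argument. The paper invokes a concrete recursion (Lemma~6.13 of \cite{KT1}) built from auxiliary elements $e(k,I)$, which vanish for $|I|\ge 4$ and satisfy an alternating-sum identity expressing $(n{-}1)!\,z(k,I)$ for $|I|=n>3$ in terms of $z(k,J)e(0,I{-}J)$ with $|J|<n$. You would need something of this kind.

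Third, the proposed Molien alternative is circular. Molien computes $P(\mathcal{K}(m)^{\mathfrak{S}_3};s,t)$, but to compare you must also compute the Hilbert series of the subalgebra generated by $\{z(i,I)\}$, and that requires knowing all relations among the $z(i,I)$---which you do not have. Equality of a known series with an unknown one proves nothing.

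Your observation that $x^6=y^6=w^6=0$ in $\Q[x,y,w]/J$ is correct and is exactly how the paper obtains the bound $i\le 6$.
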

The following proof is essentially based on Section 6 in \cite{KT1}.
This lemma can be shown in a more general case.
\begin{proof}
  At first we prove that the set
  \[
    \overline{S}=\{z(i,I)\mid 1\leq i\leq 6,I\subset \{1,\dots m\}\},
  \]
  generates ${\mathcal{K}(m)}^{\mathfrak{S}_3}$.
  Now $x^6=y^6=w^6=0$ in $\Q[x,y,w]/J$, $z(i,I)$ is $0$ for $i\geq 7$ in $H^*(\Hom(\Z^m,G_2))$.
  Let 
  \[
    \rho\colon {\mathcal{K}(m)}\rightarrow {\mathcal{K}(m)}^{\mathfrak{S}_3}
  \]
  be the linear map defined by $\rho(z)=\sum_{\sigma\in \mathfrak{S}_3} \sigma(z)$ for $z\in {\mathcal{K}(m)}$.
  Since $\rho$ restricted to ${\mathcal{K}(m)}^{\mathfrak{S}_3}$ is $|\mathcal{S}_3|$-fold map, $\rho$ is surjection. 
  So ${\mathcal{K}(m)}^{\mathfrak{S}_3}$ is spanned by the image of monic monomials $\rho(x^iy^jw^k\alpha_I\beta_J\gamma_K)$ for $i,j,k\geq 0,I,J,K\subset \{1,\dots m\}$.
  It is enough to prove that $\rho(x^iy^jw^k\alpha_I\beta_J\gamma_K)$ is in the subring generated by $\overline{S}$.

  When the monomial is $x^i\alpha_I$, $y^i\beta_I$, or $w^i\gamma_I$, 
  \[
    \rho(x^i\alpha_I)=\rho(y^i\beta_I)=\rho(w^i\gamma_I)=2z(i+1,I).
  \]
  Thus these monomials are in the subring generated by $\overline{S}$.
  
  When the monomial is $x^iy^j\alpha_I\beta_J$ for $i+|I|\ne 0\ne j+|J|$,
  \[
    \rho(x^iy^j\alpha_I\beta_J)=z(i+1,I)z(j+1,J)-(x^{i+j}\alpha_I\alpha_J+y^{i+j}\beta_I\beta_J+w^{i+j}\gamma_I\gamma_J).
  \]
  The last term $x^{i+j}\alpha_I\alpha_J+y^{i+j}\beta_I\beta_J+w^{i+j}\gamma_I\gamma_J$ is $0$ when $I\cap J\ne \emptyset$ and $\pm z(i+j+1,I\cup J)$ for $I\cap J= \emptyset$.
  Thus $\rho(x^iy^j\alpha_I\beta_J)$ is in the subring generated by $\overline{S}$.
  We can prove $\rho(x^iw^j\alpha_I\gamma_J)$ or $\rho(y^iw^j\beta_I\gamma_J)$ are in the subring generated by $\overline{S}$ by same way.  

  When the monomial is $x^iy^jw^k\alpha_I\beta_J\gamma_K$ for $i+|I|\ne 0,j+|J|\ne 0$ and $k+|K|\ne 0$, 
  \[
    \rho(x^iy^jw^k\alpha_I\beta_J\gamma_K)-z(i+1,I)z(j+1,J)z(k+1,K)
  \]
  is the linear combination of $\rho(x^{i+j}y^k\alpha_{I\cup J}\beta_K)$, $\rho(x^{i+k}y^j\alpha_{I\cup K}\beta_J)$, $\rho(x^iy^{j+k}\alpha_I\beta_{J\cup K})$ and $\rho(x^{i+j+k}\alpha_{I \cup J\cup K})$.
  By previous arguments these are in the subring generated by $\overline{S}$.
  Thus $\rho(x^iy^jw^k\alpha_I\beta_J\gamma_K)$ is also in the subring generated by $\overline{S}$.
  By combining these, we obtain that ${\mathcal{K}(m)}^{\mathfrak{S}_3}$ is generated by $\overline{S}$.

  Next, we prove that $z(i+1,I)$ with $|I|\geq 4$ is in the ring generated by
  \[
    S=\{z(i,I)\mid  1\leq i\leq 6,I\subset \{1,\dots m\},|I|\leq 3\}.
  \]
  For $1\leq i_1<i_2<\dots i_n\leq m$ Let
  \begin{align*}
    &e(k,\{i_1\})=x^k\alpha_{i_1}+y^k\beta_{i_1}+z^k\gamma_{i_1}\\
    &e(k,\{i_1,i_2\})=x^k\alpha_{i_1}(\beta_{i_2}+\gamma_{i_2})+y^k\beta_{i_1}(\alpha_{i_2}+\gamma_{i_2})+z^k\gamma_{i_1}(\alpha_{i_2}+\beta_{i_2})\\
    &e(k,\{i_1,i_2,i_3\})\\
    =&x^k\alpha_{i_1}(\beta_{i_2}\gamma_{i_3}+\gamma_{i_2}\beta_{i_3})+y^k\beta_{i_1}(\alpha_{i_2}\gamma_{i_3}+\gamma_{i_2}\alpha_{i_3})+z^k\gamma_{i_1}(\alpha_{i_2}\beta_{i_3}+\beta_{i_2}\alpha_{i_3}),
  \end{align*}
  and for $|I|\geq 4$ $e(k,I)=0$.
  By Lemma 6.13 in \cite{KT1}, for $I =\{i_1<i_2< \dots <i_n\}$ there are equations
  \begin{align*}
    &\sum_{l=0}^{n-2}\sum_{\substack{i_1\in J\subset I\\|J|=l+1}}(-1)^{l+d(J)}l!z(k,J)e(0,I-J)+(-1)^{n+1}(n-1)!z(k,I)\\
    &=
    \begin{cases}
      e(k-1,I)&(n\le 3)\\
      0&(n>3),
    \end{cases}
  \end{align*}
  where $d(J)=\left(\sum_{i_j\in J}j\right)-\frac{|J|(|J|-1)}{2}$.
  By these equations, we obtain that $e(d,I)$ is in the subring generated by $S$, and $z(i,I)$ for $|I|>3$ is also in this subring.
  By combining these argument, we obtain this lemma.
\end{proof}

\begin{lemma}\label{D6_generator}
  ${\mathcal{K}(m)}^{D_6}$ is generated by the union of
  \[
    \{z(i,I)\mid 1\leq i\leq 6,I\subset \{1,\dots m\},i+|I|\text{ is even},|I|\leq 3\}
  \]
  and 
  \[
    \{z(i,I)z(j,J)\mid 1\leq i,j\leq 6,I,J\subset \{1,\dots m\},|I|,|J|\leq 3,(i+|I|)(j+|I|)\text{ is odd}\}.
  \]
\end{lemma}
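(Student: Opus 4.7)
The plan is to realise $\mathcal{K}(m)^{D_6}$ as the $\langle a^3\rangle$-invariants inside $\mathcal{K}(m)^{\mathfrak{S}_3}$ and then apply a standard $\Z/2$-invariants argument. Since $a^3$ generates the centre of $D_6$ and the presentation given in the preceding paragraph shows $\langle a^3\rangle\cap\mathfrak{S}_3=\{1\}$, one has a direct-product decomposition $D_6=\mathfrak{S}_3\times\langle a^3\rangle$, and consequently
\[
\mathcal{K}(m)^{D_6}=\bigl(\mathcal{K}(m)^{\mathfrak{S}_3}\bigr)^{\langle a^3\rangle}.
\]
By Lemma \ref{S_3_generator}, $\mathcal{K}(m)^{\mathfrak{S}_3}$ is already known to be generated by $\{z(i,I):1\leq i\leq 6,\ |I|\leq 3\}$, so it suffices to understand how $a^3$ acts on this generating set.

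First I would observe that every such $z(i,I)$ is an eigenvector for $a^3$: since $a^3$ acts as $-1$ on both $H^2(BT)$ and $H^1(T^m)$, it multiplies $z(i,I)=x^{i-1}\alpha_I+y^{i-1}\beta_I+w^{i-1}\gamma_I$ by a sign that depends only on the parity of $(i-1)+|I|$. This partitions the generating set of Lemma \ref{S_3_generator} into $(+1)$-eigenvectors—precisely the $z(i,I)$ listed in the first family of the statement—and $(-1)$-eigenvectors. Next I would invoke the elementary fact that if a commutative algebra $R$ is generated by homogeneous eigenvectors of a $\Z/2$-action, with $(+1)$-eigenvectors $A$ and $(-1)$-eigenvectors $B$, then $R^{\Z/2}$ is generated by $A\cup\{bb':b,b'\in B\}$. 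The proof is one line: every monomial in the generators of $R$ is itself an eigenvector with eigenvalue $(-1)^{k}$, where $k$ counts the $B$-factors; invariant monomials are those with $k$ even, and each such monomial is a product of $A$-elements and pairs from $B$. Applied to $R=\mathcal{K}(m)^{\mathfrak{S}_3}$ with $A,B$ as above, this yields exactly the generating set displayed in the lemma.

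No substantive obstacle is anticipated: the genuine combinatorial content has already been handled in the proof of Lemma \ref{S_3_generator}, and the present argument is a mechanical application of $\Z/2$-invariant theory to those generators. The only bookkeeping is to match the parity condition coming from the $a^3$-eigenvalue with the $i+|I|$ parity appearing in the statement, and to note that the condition $(i+|I|)(j+|J|)$ odd on the product family is exactly the condition that both factors lie in $B$.
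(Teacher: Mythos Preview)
Your proposal is correct and follows essentially the same approach as the paper's proof: both pass from $\mathcal{K}(m)^{\mathfrak{S}_3}$ to $\mathcal{K}(m)^{D_6}$ by noting that $a^3$ acts as $\pm 1$ on each generator $z(i,I)$ from Lemma~\ref{S_3_generator}, and then read off the invariants as the $+1$-eigenvectors together with pairwise products of $-1$-eigenvectors. Your write-up is a little more explicit about the $\Z/2$-invariants step, but the underlying argument is identical.
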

\begin{proof}
  $D_6$ is generated by $\mathfrak{S}_3\subset D_6$ and an order 2 element $a^3\in D_6$.
  Since the action of $a^3$ is the map that multiplies by $-1$ on the generators of $\mathcal{K}(m)$ and 
  \[
    a^3(z(i+1,I))=
    \begin{cases}
      z(i+1,I)\quad &(i+|I|\text{ is even})\\
      -z(i+1,I)\quad &(i+|I|\text{ is odd}),\\
    \end{cases}
  \] 
  by Lemma \ref{S_3_generator}, we obtain this lemma.
\end{proof}

Let $\mathcal{L}(m)$ be the subring of $\mathcal{K}(m)^{D_6}$ generated by 
\[
  \{z(3-|I|,I)\mid I\subset \{1,\dots m\},|I|\leq 2\}\cup \{z(7-|I|,I)\mid I\subset \{1,\dots m\},|I|\leq 6\}.
\]
To prove Theorem \ref{G_2_generator} we will show that the generators of ${\mathcal{K}(m)}^{D_6}$ in Lemma \ref{D6_generator} are in $\mathcal{L}(m)$.
Since each bijection $\{1,\dots m\}\rightarrow \{1,\dots m\}$ induces a $D_6$-equivariant isomorphism $\mathcal{K}(m)\rightarrow \mathcal{K}(m)$, it is enough to consider $z(i,\{1,\dots,|I|\})$ and $z(i,\{1,\dots |I|\})z(j,\{|I|-|I\cap J|+1,\dots,|I|+|J|-|I\cap J|\})$ instead of $z(i,I)$ and $z(i,I)z(j,J)$.
We have to prove that the set
\[
    \{z(2,\{1,2,3\}),z(3,\{1,2\}),z(4,\{1\}),z(4,\{1,2,3\}),z(6,\{1,2,3\})\}
\]
are in $\mathcal{L}(m)$ for suitable $m$, and the product of two of 
\begin{align*}
  &\{z(1,I),z(3,I),z(5,I)\mid |I|=3\}\cup\{z(2,I),z(4,I),z(6,I)\mid |I|=2\}\\
  \cup&\{z(3,I),z(5,I)\mid |I|=1\}\cup\{z(4,\emptyset),z(6,\emptyset)\}
\end{align*}
are in $\mathcal{L}(m)$ for suitable $m$.
Remark that $z(1,\{1,2\}),z(2,\{1\}),z(5,\{1,2\}),z(6,\{1\})$ are in $\mathcal{L}(m)$ as generators, and by definition of $\mathcal{K}(m)$ $z(1,\{1\}),z(2,\emptyset),z(3,\emptyset),z(5,\emptyset)$ are $0$ in $\mathcal{K}(m)$.
So we don't have to consider the cases related to these elements.

These are too much to check all of them directly.
To reduce the amount of calculation, we use some properties.
At first, we compute the relation between them.
\begin{lemma}\label{relation_between_z_G2}
  There are equations
  \begin{align*}
    9z&(3,\{1,2,3\})\\
    &=z(2,\{1,2\})z(2,\{3\})-z(2,\{1,3\})z(2,\{2\})+z(2,\{2,3\})z(2,\{1\})\\
    &+z(1,\{1,2\})z(3,\{3\})-z(1,\{1,3\})z(3,\{2\})+z(1,\{2,3\})z(3,\{1\}),\\
    9z&(5,\{1,2,3\})\\
    &=z(4,\{1,2\})z(2,\{3\})-z(4,\{1,3\})z(2,\{2\})+z(4,\{2,3\})z(2,\{1\})\\
    &+z(3,\{1,2\})z(3,\{3\})-z(3,\{1,3\})z(3,\{2\})+z(3,\{2,3\})z(3,\{1\}),\\
    3z&(4,\{1,2\})=z(3,\{1\})z(2,\{2\})+z(2,\{1\})z(3,\{2\}), \\
    3z&(6,\{1,2\})=z(5,\{1\})z(2,\{2\})+z(4,\{1\})z(3,\{2\}), \\
    z(&6,\emptyset)=0\\
  \end{align*}
\end{lemma}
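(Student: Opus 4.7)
The plan is to verify the five identities by direct computation in the explicit presentation
\[
  \mathcal{K}(m) \cong \Q[x,y,w]/J \otimes \bigotimes_{1\le j\le m}\Lambda(\alpha_j,\beta_j,\gamma_j)/K_j,
\]
using $z(k,I) = x^{k-1}\alpha_I + y^{k-1}\beta_I + w^{k-1}\gamma_I$. The first step is to extract the useful polynomial identities in $\Q[x,y,w]/J$. From $x+y+w=0$ and $x^2+y^2+w^2=0$ one reads off $xy+yw+wx=0$, and eliminating $w=-x-y$ yields $x^2+xy+y^2=0$, whence $x^3-y^3=(x-y)(x^2+xy+y^2)=0$ and cyclically
\[
  x^3 = y^3 = w^3 = xyw, \qquad x^2=yw,\ y^2=wx,\ w^2=xy.
\]
Newton's identities together with $e_1=e_2=0$ then give $x^5+y^5+w^5=0$, which is exactly $z(6,\emptyset)=0$.

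For each of the other four identities, I would expand the products $z(k,I)z(l,J)$ and split them into a ``diagonal'' part $\sum_v v^{k+l-2} v_I v_J$ (vanishing when $I\cap J\ne\emptyset$) and an ``off-diagonal'' part $\sum_{v\ne u} v^{k-1}u^{l-1} v_I u_J$. The signs in the statement are calibrated so that, after shuffling $v_I v_J = \pm v_{I\cup J}$, the diagonal contributions add coherently: $2\, z(k+l-1,\{1,2\})$ for the identities with LHS coefficient $3$, and $6\, z(k+l-1,\{1,2,3\})$ for the cubic identities with coefficient $9$. For the off-diagonal part, the crucial manipulation in each consecutive pair (such as $z(3,\{1\})z(2,\{2\})+z(2,\{1\})z(3,\{2\})$) is
\[
  v^{k-1}u^{l-1}+v^{l-1}u^{k-1} = v^{\min(k,l)-1}u^{\min(k,l)-1}(v+u)(\cdots),
\]
so that the factor $v+u=-w'$ extracts $-xyw$ (higher powers reduce the same way via $x^3=y^3=w^3=xyw$). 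The remaining exterior sum is then simplified using $(\alpha_i+\beta_i+\gamma_i)(\alpha_j+\beta_j+\gamma_j)=0$, which collapses the mixed terms to $-(\alpha_i\alpha_j+\beta_i\beta_j+\gamma_i\gamma_j)$; combining with $xyw=x^3=y^3=w^3$ produces the missing $z(k+l-1,\{1,2\})$ in the $3z(4,\{1,2\})$ and $3z(6,\{1,2\})$ identities.

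The main obstacle is the bookkeeping in the two cubic identities $9z(3,\{1,2,3\})$ and $9z(5,\{1,2,3\})$, whose six summands expand into many exterior monomials. This is handled by invoking the threefold cyclic symmetry in the labels $\{1,2,3\}$, which reduces the off-diagonal verification to a single representative pair such as $z(2,\{1,2\})z(2,\{3\})+z(1,\{1,2\})z(3,\{3\})$; a short computation collapses its off-diagonal to $yw\,\alpha_1\alpha_2\alpha_3+xw\,\beta_1\beta_2\beta_3+xy\,\gamma_1\gamma_2\gamma_3$, which by $x^2=yw$ and its cyclic partners equals $z(3,\{1,2,3\})$ (and analogously $z(5,\{1,2,3\})$ for the second identity using $x^4=x\cdot xyw\cdot \tfrac{1}{y}$ etc., i.e.\ $x^4 = x\cdot x^3 = x\cdot xyw = x^2yw=y^2w^2$, giving the correct degree match). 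Summing the three cyclic pairs delivers an additional $3\, z(k+l-1,\{1,2,3\})$ on top of the $6\, z(k+l-1,\{1,2,3\})$ from the diagonal, giving the coefficient $9$ on the LHS and completing the verification.
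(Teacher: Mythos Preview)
Your approach is correct and is at bottom the same direct computation the paper carries out: expand the products and simplify using the relations in $\Q[x,y,w]/J$, in particular $x^2+xy+y^2=0$ and its cyclic partners (equivalently $x^2=yw$, $x^3=y^3=w^3=xyw$), together with $\alpha_j+\beta_j+\gamma_j=0$. The paper writes everything out term by term, whereas your diagonal/off-diagonal split and the use of cyclic symmetry among the labels $\{1,2,3\}$ is a genuine economy: it reduces the first two identities to checking a single representative pair, and the verification you sketch (off-diagonal of $z(2,\{1,2\})z(2,\{3\})+z(1,\{1,2\})z(3,\{3\})$ collapsing to $x^2\alpha_1\alpha_2\alpha_3+y^2\beta_1\beta_2\beta_3+w^2\gamma_1\gamma_2\gamma_3$) is exactly right.

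One small imprecision: your template ``$v^{k-1}u^{l-1}+v^{l-1}u^{k-1}$'' for the off-diagonal of a consecutive pair fits $z(3,\{1\})z(2,\{2\})+z(2,\{1\})z(3,\{2\})$, but the fourth identity pairs $(5,2)$ with $(4,3)$, not a swap. The computation still goes through, since the off-diagonal coefficient of $\alpha_1\beta_2$ is $x^4y+x^3y^2=x^3(xy+y^2)=-x^5$ (and cyclically), which then collapses as before via $\beta_2+\gamma_2=-\alpha_2$; you should replace the general factorization claim by this one-line check. The aside ``$x^4=x\cdot xyw\cdot\tfrac1y$'' is ill-formed (there is no division here) and should be dropped; the correct reduction is simply $x^3y+x^2y^2=x^2(xy+y^2)=-x^4$ in the second cubic identity.
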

\begin{proof}
  In $\Q[x,y,w]/J$, there is an equation 
  \[
    x^2+xy+y^2=y^2+yw+w^2=w^2+wx+x^2=0.
  \]
  There is an equation
  \begin{align*}
    &z(2,\{1,2\})z(2,\{3\})-z(2,\{1,3\})z(2,\{2\})+z(2,\{2,3\})z(2,\{1\})\\
    +&z(1,\{1,2\})z(3,\{3\})-z(1,\{1,3\})z(3,\{2\})+z(1,\{2,3\})z(3,\{1\})\\
    =&6x^2\alpha_1 \alpha_2 \alpha_3+ 6y^2\beta_1 \beta_2 \beta_3 +6w^2\gamma_1 \gamma_2 \gamma_3\\
    +& (xy +y^2) \alpha_1 \alpha_2 \beta_3 + (xw +w^2) \alpha_1 \alpha_2 \gamma_3\\
    +& (yx +x^2) \beta_1 \beta_2 \alpha_3 + (yw +w^2) \beta_1 \beta_2 \gamma_3\\
    +& (wx+x^2) \gamma_1 \gamma_2 \alpha_3 + (wy +y^2) \gamma_1 \gamma_2 \beta_3\\
    +& (xy +y^2) \alpha_1 \beta_2 \alpha_3 + (xw +w^2) \alpha_1 \gamma_2 \alpha_3\\
    +& (yx +x^2) \beta_1 \alpha_2 \beta_3 + (yw +w^2) \beta_1 \gamma_2 \beta_3\\
    +& (wx+x^2) \gamma_1 \alpha_2 \gamma_3 + (wy +y^2) \gamma_1 \beta_2 \gamma_3\\
    +& (xy +y^2) \beta_1 \alpha_2 \alpha_3 + (xw +w^2) \gamma_1 \alpha_2 \alpha_3\\
    +& (yx +x^2) \alpha_1 \beta_2 \beta_3 + (yw +w^2) \gamma_1 \beta_2 \beta_3\\
    +& (wx+x^2) \alpha_1 \gamma_2 \gamma_3 + (wy +y^2) \beta_1 \gamma_2 \gamma_3\\
    =&6x^2\alpha_1 \alpha_2 \alpha_3+ 6y^2\beta_1 \beta_2 \beta_3 +6w^2\gamma_1 \gamma_2 \gamma_3\\
    -& x^2 \alpha_1 \alpha_2 \beta_3 + x^2 \alpha_1 \alpha_2 (\alpha_3 + \beta_3)\\
    -& y^2 \beta_1 \beta_2 \alpha_3 + y^2 \beta_1 \beta_2 (\alpha_3 + \beta_3)\\
    -& w^2 \gamma_1 \gamma_2 \alpha_3 + w^2 \gamma_1 \gamma_2 (\alpha_3 + \gamma_3)\\
    -& x^2 \alpha_1 \beta_2 \alpha_3 + x^2 \alpha_1 (\alpha_3 + \beta_3) \alpha_3\\
    -& y^2 \beta_1 \alpha_2 \beta_3 + y^2 \beta_1 (\alpha_3 + \beta_3) \beta_3\\
    -& w^2 \gamma_1 \alpha_2 \gamma_3 + w^2 \gamma_1 (\alpha_3 + \gamma_3) \gamma_3\\
    -& x^2 \beta_1 \alpha_2 \alpha_3 + x^2 (\alpha_3 + \beta_3) \alpha_2 \alpha_3\\
    -& y^2 \alpha_1 \beta_2 \beta_3 + y^2 (\alpha_3 + \beta_3) \beta_2 \beta_3\\
    -& w^2 \alpha_1 \gamma_2 \gamma_3 + w^2 (\alpha_3 + \gamma_3) \gamma_2 \gamma_3\\
    =&9x^2\alpha_1 \alpha_2 \alpha_3+ 9y^2\beta_1 \beta_2 \beta_3 +9w^2\gamma_1 \gamma_2 \gamma_3\\
    =&9z(3,\{1,2,3\}),
  \end{align*}
  and we obtain the first equation.
  There is an equation
  \begin{align*}
    &z(4,\{1,2\})z(2,\{3\})-z(4,\{1,3\})z(2,\{2\})+z(4,\{2,3\})z(2,\{1\})\\
    +&z(3,\{1,2\})z(3,\{3\})-z(3,\{1,3\})z(3,\{2\})+z(3,\{2,3\})z(3,\{1\})\\
    =&6x^4\alpha_1 \alpha_2 \alpha_3+ 6y^4\beta_1 \beta_2 \beta_3 +6w^4\gamma_1 \gamma_2 \gamma_3\\
    +& (x^3y +x^2y^2) \alpha_1 \alpha_2 \beta_3 + (x^3w +x^2w^2) \alpha_1 \alpha_2 \gamma_3\\
    +& (y^3x +y^2x^2) \beta_1 \beta_2 \alpha_3 + (y^3w +y^2w^2) \beta_1 \beta_2 \gamma_3\\
    +& (w^3x+w^2x^2) \gamma_1 \gamma_2 \alpha_3 + (w^3y +w^2y^2) \gamma_1 \gamma_2 \beta_3\\
    +& (x^3y +x^2y^2) \alpha_1 \beta_2 \alpha_3 + (x^3w +x^2w^2) \alpha_1 \gamma_2 \alpha_3\\
    +& (y^3x +y^2x^2) \beta_1 \alpha_2 \beta_3 + (y^3w +y^2w^2) \beta_1 \gamma_2 \beta_3\\
    +& (w^3x+w^2x^2) \gamma_1 \alpha_2 \gamma_3 + (w^3y +w^2y^2) \gamma_1 \beta_2 \gamma_3\\
    +& (x^3y +x^2y^2) \beta_1 \alpha_2 \alpha_3 + (x^3w +x^2w^2) \gamma_1 \alpha_2 \alpha_3\\
    +& (y^3x +y^2x^2) \alpha_1 \beta_2 \beta_3 + (y^3w +y^2w^2) \gamma_1 \beta_2 \beta_3\\
    +& (w^3x+w^2x^2) \alpha_1 \gamma_2 \gamma_3 + (w^3y +w^2y^2) \beta_1 \gamma_2 \gamma_3\\
    =&6x^4\alpha_1 \alpha_2 \alpha_3+ 6y^4\beta_1 \beta_2 \beta_3 +6w^4\gamma_1 \gamma_2 \gamma_3\\
    -& x^4 \alpha_1 \alpha_2 \beta_3 + x^4 \alpha_1 \alpha_2 (\alpha_3 + \beta_3)\\
    -& y^4 \beta_1 \beta_2 \alpha_3 + y^4 \beta_1 \beta_2 (\alpha_3 + \beta_3)\\
    -& w^4 \gamma_1 \gamma_2 \alpha_3 + w^4 \gamma_1 \gamma_2 (\alpha_3 + \gamma_3)\\
    -& x^4 \alpha_1 \beta_2 \alpha_3 + x^4 \alpha_1 (\alpha_3 + \beta_3) \alpha_3\\
    -& y^4 \beta_1 \alpha_2 \beta_3 + y^4 \beta_1 (\alpha_3 + \beta_3) \beta_3\\
    -& w^4 \gamma_1 \alpha_2 \gamma_3 + w^4 \gamma_1 (\alpha_3 + \gamma_3) \gamma_3\\
    -& x^4 \beta_1 \alpha_2 \alpha_3 + x^4 (\alpha_3 + \beta_3) \alpha_2 \alpha_3\\
    -& y^4 \alpha_1 \beta_2 \beta_3 + y^4 (\alpha_3 + \beta_3) \beta_2 \beta_3\\
    -& w^4 \alpha_1 \gamma_2 \gamma_3 + w^4 (\alpha_3 + \gamma_3) \gamma_2 \gamma_3\\
    =&9x^4\alpha_1 \alpha_2 \alpha_3+ 9y^4\beta_1 \beta_2 \beta_3 +9w^4\gamma_1 \gamma_2 \gamma_3\\
    =&9z(5,\{1,2,3\}),
  \end{align*}
  and we obtain the second equation.
  There is an equation
  \begin{align*}
    &z(3,\{1\})z(2,\{2\})+z(2,\{1\})z(3,\{2\}) \\
    =&2x^3\alpha_1\alpha_2 + 2y^3\beta_1 \beta_2 + 2w^3\gamma_1\gamma_2 \\
    +& (x^2y+xy^2)\alpha_1 \beta_2 + (x^2w+xw^2)\alpha_1 \gamma_2  \\
    +& (x^2y+xy^2)\beta_1 \alpha_2 + (y^2w+yw^2)\beta_1 \gamma_2  \\
    +& (x^2w+xw^2)\gamma_1 \alpha_2 + (y^2w+yw^2)\gamma_1 \beta_2  \\
    =&2x^3\alpha_1\alpha_2 + 2y^3\beta_1 \beta_2 + 2w^3\gamma_1\gamma_2 \\
    -& x^3\alpha_1 \beta_2 + x^3\alpha_1 (\alpha_2 + \beta_2)  \\
    -& y^3\beta_1 \alpha_2 + y^3\beta_1 (\alpha_2+ \beta_2)  \\
    -& w^3\gamma_1 \alpha_2 + w^3\gamma_1 (\alpha_2+ \gamma_2)  \\
    =& 3 x^3\alpha_1\alpha_2 + 3y^3\beta_1 \beta_2 + 3w^3\gamma_1\gamma_2 \\
    =&3 z(4,\{1,2\}),
  \end{align*}
  and we obtain the third equation.
  There is an equation
  \begin{align*}
    &z(4,\{1\})z(1,\{2\})+z(3,\{1\})z(2,\{2\}) \\
    =&2x^5\alpha_1\alpha_2 + 2y^5\beta_1 \beta_2 + 2w^5\gamma_1\gamma_2 \\
    +& (x^4y+x^3y^2)\alpha_1 \beta_2 + (x^4w+x^3w^2)\alpha_1 \gamma_2  \\
    +& (x^4y+x^3y^2)\beta_1 \alpha_2 + (y^4w+y^3w^2)\beta_1 \gamma_2  \\
    +& (x^4w+x^3w^2)\gamma_1 \alpha_2 + (y^4w+y^3w^2)\gamma_1 \beta_2  \\
    =&2x^5\alpha_1\alpha_2 + 2y^5\beta_1 \beta_2 + 2w^5\gamma_1\gamma_2 \\
    -& x^5\alpha_1 \beta_2 + x^5\alpha_1 (\alpha_2 + \beta_2)  \\
    -& y^5\beta_1 \alpha_2 + y^5\beta_1 (\alpha_2+ \beta_2)  \\
    -& w^5\gamma_1 \alpha_2 + w^5\gamma_1 (\alpha_2+ \gamma_2)  \\
    =& 3 x^5\alpha_1\alpha_2 + 3y^5\beta_1 \beta_2 + 3w^5\gamma_1\gamma_2 \\
    =& 3 z(6,\{1,2\}),
  \end{align*}
  and we obtain the forth equation.
  There is an equation
  \begin{align*}
    &z(6,\emptyset) \\
    =&x^5+y^5+w^5 \\
    =& (x^4+y^4+w^4)(x+y+w)-(x^3+y^3+w^3)(x^2+y^2+w^2)\\
    +&(x^2+y^2+w^2)xyw  \\
    =&-\frac{1}{2}(x^3+y^3+w^3)((x+y+w)^2-(x^2+y^2+w^2))\\
    =&0,
  \end{align*}
  and we obtain the fifth equation.
\end{proof}

The following is Theorem 4.5 in \cite{Ta}.
\begin{theorem}\label{Theorem_2G2_generator}
  $\mathcal{K}(2)^{D_6}$ is generated by 
  \[
    \{z(2,\{1\}),z(2,\{2\}),z(6,\{1\}),z(6,\{2\}),z(1,\{1,2\}),z(5,\{1,2\})\}.
  \]
\end{theorem}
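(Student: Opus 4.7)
Starting point: Lemma \ref{D6_generator} applied to $m=2$ produces an initial generating set for $\mathcal{K}(2)^{D_6}$ consisting of Type A elements $z(k,I)$ with $k+|I|$ odd (and $|I|\leq 2$) together with Type B products $z(i,I)z(j,J)$ in which both $i+|I|$ and $j+|J|$ are even. The plan is to show every such generator already lies in the subalgebra $\mathcal{L}(2)$ generated by the six listed classes, by exploiting the explicit identities in $H^*(G_2/T)=\Q[x,y,w]/J$. Combining $x+y+w=0$ with $x^2+y^2+w^2=0$ gives $xy+yw+wx=0$, from which one derives $x^2=yw,\ y^2=xw,\ w^2=xy$ and hence $x^3=y^3=w^3=xyw$ and $(xyw)^2=x^6=0$. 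These identities are the computational backbone.

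They immediately dispose of several Type A generators: $z(4,\{j\})=xyw(\alpha_j+\beta_j+\gamma_j)=0$ for $j=1,2$, and $z(4,\{1,2\})=xyw\cdot z(1,\{1,2\})$ already lies in $\mathcal{L}(2)$. For the remaining Type A generator $z(3,\{1,2\})$, I will expand the product $z(2,\{1\})z(2,\{2\})$ directly; after substituting $x^2=yw,\ y^2=xw,\ w^2=xy$, the diagonal part is precisely $z(3,\{1,2\})$, and the six off-diagonal cross terms, rewritten via $\gamma_j=-\alpha_j-\beta_j$ and with coefficients collapsed using $xy+yw+wx=0$, sum to exactly $2\,z(3,\{1,2\})$. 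This yields the clean identity $z(2,\{1\})z(2,\{2\})=3\,z(3,\{1,2\})$, placing $z(3,\{1,2\})$ inside $\mathcal{L}(2)$.

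The third and most substantial step is to reduce every Type B product to $\mathcal{L}(2)$. After the vanishings above, the relevant anti-invariant factors are $z(3,\{j\}),z(5,\{j\}),z(2,\{1,2\}),z(6,\{1,2\}),z(4,\emptyset)=3xyw$ (while $z(2,\emptyset),z(6,\emptyset),z(1,\{j\})$ are already zero), and the third and fourth equations of Lemma \ref{relation_between_z_G2} express $z(4,\{1,2\})$ and $z(6,\{1,2\})$ as combinations of products of the $z(k,\{j\})$. Each of the remaining products $z(k,\{j\})z(l,\{j'\})$, $z(k,\{j\})z(l,\{1,2\})$, and products involving $z(4,\emptyset)=3xyw$ then needs to be expanded and matched to a polynomial in the six listed generators, using $x^2=yw$ and friends together with the exterior identity $\alpha_j\beta_j\gamma_j=0$. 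The main obstacle is the sheer number of these case-by-case verifications; a practical shortcut is to compute the Hilbert-Poincar\'e series of $\mathcal{L}(2)$ from the bidegrees $(2,1),(10,1),(0,2),(8,2)$ of the six generators together with the easily verified relations $z(2,\{j\})^2=z(6,\{j\})^2=0$ and the top-degree vanishings $z(1,\{1,2\})^3=z(5,\{1,2\})^2=z(1,\{1,2\})z(5,\{1,2\})=0$, then compare with $P(\Hom(\Z^2,G_2)_0,s,t)$ obtained from Theorem \ref{bigraded_Poincare_formula}; an agreement of series would upgrade the already-established inclusion $\mathcal{L}(2)\subseteq\mathcal{K}(2)^{D_6}$ to equality.
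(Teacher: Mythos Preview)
The paper does not prove this theorem; it is quoted from \cite{Ta}. Your strategy via Lemma~\ref{D6_generator} is nonetheless a reasonable route, and the identities you establish in $H^*(G_2/T)$---in particular $x^2=yw$, $x^3=y^3=w^3=xyw$, $(xyw)^2=0$, and the resulting relation $z(2,\{1\})z(2,\{2\})=3\,z(3,\{1,2\})$---are correct. One minor slip: $z(4,\{1,2\})$ has $k+|I|=6$ even, so it is anti-invariant under $a^3$ and is not a Type~A generator; since $xyw\notin\mathcal{K}(2)^{D_6}$, your claim $z(4,\{1,2\})\in\mathcal{L}(2)$ is also false, though this is harmless for the argument.

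The substantive gap is the shortcut. Your listed relations do not come close to presenting $\mathcal{L}(2)$: the model $\Lambda\bigl(z(2,\{1\}),z(2,\{2\}),z(6,\{1\}),z(6,\{2\})\bigr)\otimes\Q[u,v]/(u^3,v^2,uv)$ has total dimension $16\cdot 4=64$, whereas $\dim\mathcal{K}(2)^{D_6}=16$, so its Poincar\'e series cannot match $P(\mathcal{K}(2)^{D_6};s,t)$. For instance $z(2,\{1\})z(2,\{2\})z(1,\{1,2\})=3\,z(3,\{1,2\})z(1,\{1,2\})=0$ is a missing relation in bidegree $(4,4)$. Moreover, knowing only some relations yields merely an upper bound on $P(\mathcal{L}(2))$, and combined with the inclusion $\mathcal{L}(2)\subseteq\mathcal{K}(2)^{D_6}$ this gives no conclusion. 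The workable dimension-count argument runs the other way: exhibit, in each bidegree, enough linearly independent monomials in the six generators to match the known $P(\mathcal{K}(2)^{D_6};s,t)$---precisely the technique the paper deploys for larger $m$ in Lemma~\ref{other_cases_generator}. Alternatively, carrying out the direct case-check of the Type~B products (about a dozen after symmetry and the reductions from Lemma~\ref{relation_between_z_G2}) would complete the argument.
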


By Lemma \ref{relation_between_z_G2}, we don't have to consider the cases related to $\{z(3,I),z(5,I)\mid |I|=3\}\cup\{z(4,I)\mid |I|=2\}\cup\{z(6,\emptyset)\}$.
By Theorem \ref{Theorem_2G2_generator} and the isomorphism $\mathcal{K}(1)^{D_6}\cong H^*(G_2)$, the following elements are in $\mathcal{L}(1)$ or $\mathcal{L}(2)$;
\begin{align*}
  &z(3,\{1,2\}),z(4,\{1\}),z(2,\{1,2\})^2,z(2,\{1,2\})z(3,\{2\}),z(2,\{1,2\})z(4,\emptyset),\\
  &z(2,\{1,2\})z(5,\{2\}),z(3,\{1\})z(3,\{2\}),z(3,\{1\})z(4,\emptyset),z(3,\{1\})z(5,\{1\}),\\
  &z(3,\{1\})z(5,\{2\}),z(4,\emptyset)z(5,\{1\}),z(5,\{1\})z(5,\{2\}).
\end{align*}
Therefore we obtain the next lemma.
\begin{lemma}\label{basis_to_check}
  If the following elements are in $\mathcal{L}(m)$ for suitable $m$;
  \begin{align*}
    &z(2,\{1,2,3\}),z(4,\{1,2,3\}),z(6,\{1,2,3\})\\
    &z(1,\{1,2,3\})z(1,\{2,3,4\}),z(1,\{1,2,3\})z(1,\{3,4,5\}),\\
    &z(1,\{1,2,3\})z(1,\{4,5,6\}),z(1,\{1,2,3\})z(2,\{2,3\}),z(1,\{1,2,3\})z(2,\{3,4\}),\\
    &z(1,\{1,2,3\})z(2,\{4,5\}),z(1,\{1,2,3\})z(3,\{3\}),z(1,\{1,2,3\})z(3,\{4\}),\\
    &z(1,\{1,2,3\})z(4,\emptyset),z(1,\{1,2,3\})z(5,\{3\}),z(1,\{1,2,3\})z(5,\{4\}),\\
    &z(2,\{1,2\})z(2,\{2,3\}),z(2,\{1,2\})z(2,\{3,4\}),z(2,\{1,2\})z(3,\{3\}),\\
    &z(2,\{1,2\})z(5,\{3\}), 
  \end{align*}
  then $\mathcal{L}(m)=\mathcal{K}(m)$ for all $m$.
\end{lemma}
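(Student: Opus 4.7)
The plan is to run through the generators of $\mathcal{K}(m)^{D_6}$ supplied by Lemma \ref{D6_generator}, namely the single $D_6$-invariants $z(i,I)$ with $|I|\leq 3$ and the products $z(i,I)z(j,J)$ of two non-invariants with $|I|,|J|\leq 3$, and to check each of them, in normalized form, against the displayed list. Since every bijection of $\{1,\dots,m\}$ induces a $D_6$-equivariant automorphism of $\mathcal{K}(m)$, I may fix $I=\{1,\dots,|I|\}$ and, in the product case, $J=\{|I|-r+1,\dots,|I|+|J|-r\}$ with $r=|I\cap J|$. The set of representatives is then finite and parametrized by $(i,|I|)$ on singletons and by $(i,j,|I|,|J|,r)$ on products, independently of $m$.

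Next I would apply three reductions to this finite list. First, Lemma \ref{relation_between_z_G2} expresses $z(3,\{1,2,3\})$, $z(5,\{1,2,3\})$, $z(4,\{1,2\})$, and $z(6,\{1,2\})$ as polynomial combinations of elements of $\mathcal{L}(m)$, and it shows $z(6,\emptyset)=0$; likewise $z(2,\emptyset)$, $z(3,\emptyset)$, $z(5,\emptyset)$, and $z(1,\{1\})$ vanish in $\mathcal{K}(m)$ by definition. Second, any representative whose combined index set $I\cup J$ has cardinality at most $2$ lies in $\mathcal{K}(2)^{D_6}$, which equals $\mathcal{L}(2)\subseteq \mathcal{L}(m)$ by Theorem \ref{Theorem_2G2_generator} (together with $\mathcal{K}(1)^{D_6}\cong H^*(G_2)$ for $m=1$). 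Third, any product one of whose factors is among the ring generators $z(2,\{i\})$, $z(1,\{i,j\})$, $z(5,\{i,j\})$, or $z(6,\{i\})$ of $\mathcal{L}(m)$ reduces directly to the membership question for the other factor.

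After these three reductions, the surviving single invariants are exactly $z(2,\{1,2,3\})$, $z(4,\{1,2,3\})$, and $z(6,\{1,2,3\})$. For the products, the factor types that remain are
\[
z(1,\{1,2,3\}),\ z(2,\{1,2\}),\ z(3,\{1\}),\ z(5,\{1\}),\ z(4,\emptyset),
\]
and I would pair them in all combinations, cycling through each possible overlap $r$ and then discarding the pairings with $|I\cup J|\leq 2$; the residual products should match precisely the fifteen items displayed in the lemma. The principal obstacle is purely the bookkeeping of this enumeration: one must account for every pair-plus-overlap combination, verify that the displayed list is both necessary and sufficient, and ensure no $D_6$-invariant combination escapes the three reductions above. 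Once that finite enumeration is complete, the conclusion follows with no further conceptual input.
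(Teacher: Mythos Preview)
Your plan coincides with the paper's argument: normalize by the $\mathfrak{S}_m$-action, reduce the list of possible non-invariant factors via Lemma~\ref{relation_between_z_G2} and the obvious vanishings, absorb everything with combined index set of size at most $2$ using Theorem~\ref{Theorem_2G2_generator}, and enumerate the surviving pairings of the five atomic factor types. The enumeration you outline does yield exactly the $3+15$ items in the statement.

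One phrasing to tighten: Lemma~\ref{relation_between_z_G2} does \emph{not} express $z(3,\{1,2,3\})$, $z(5,\{1,2,3\})$, $z(4,\{1,2\})$, $z(6,\{1,2\})$ as polynomials in elements of $\mathcal{L}(m)$ --- these classes are non-invariant and cannot lie in $\mathcal{L}(m)$ at all. What the identities actually give is a sum of terms of the shape $(\text{invariant in }\mathcal{L})\cdot(\text{simpler non-invariant})$; for instance $3z(6,\{1,2\})=z(2,\{2\})\,z(5,\{1\})+z(4,\{1\})\,z(3,\{2\})$, where $z(2,\{2\})$ and $z(4,\{1\})$ are invariants in $\mathcal{L}$ by Theorem~\ref{Theorem_2G2_generator}. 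Thus in a product $z(6,\{1,2\})X$ one obtains $z(2,\{2\})\cdot(z(5,\{1\})X)+z(4,\{1\})\cdot(z(3,\{2\})X)$, reducing to products with the simpler atoms $z(5,I_1)$ and $z(3,I_1)$. Your reduction (c) is presumably meant to encode this mechanism, but as literally written it is vacuous: the invariant generators $z(2,\{i\})$, $z(1,\{i,j\})$, $z(5,\{i,j\})$, $z(6,\{i\})$ of $\mathcal{L}(m)$ can never appear as a factor in the products $z(i,I)z(j,J)$ of Lemma~\ref{D6_generator}, since both factors there are required to be non-invariant. With this clarification the reduction goes through and the rest of your argument is correct.
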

Remark that by the property of commutative graded algebra, the square of an odd graded element in $\mathcal{K}$ is $0$, so $z(1,\{1,2,3\})^2,z(3,\{1\})^2$ are not in this list.
And by degree reason $z(4,\emptyset)^2$ is $0$, $z(4,\emptyset)^2$ is also not in this list.

Because there are still many cases, so we use another property, filtration of ${\mathcal{K}(m)}^{D_6}$.
For $0 \leq i\leq m$ let $\sigma_i\colon \{1,\dots m\}\rightarrow \{1,\dots m+1\}$ be the map defined as 
\[
  \sigma_i(j)=\begin{cases}
    j\quad &(j\leq i)\\
    j+1\quad &(i+1\leq j),
  \end{cases}
\]
and let $\sigma_i\colon\mathcal{K}(m)\rightarrow \mathcal{K}(m+1)$ for $0\leq i \leq m$ be the inclusion defined as $\sigma_i(\alpha_j)=\alpha_{\sigma_i(j)}$.
We denote this map as same symbol $\sigma_i$.
Let 
\[
  F_{n}\mathcal{K}(m)=\bigcup_{i_1,\dots i_{m-n}}\sigma_{i_1}\circ \dots \circ \sigma_{i_{m-n}}(\mathcal{K}(n))\subset \mathcal{K}(m).
\]
Then $F_1\mathcal{K}(m)\subset F_2\mathcal{K}(m)\subset \dots \subset F_m\mathcal{K}(m)=\mathcal{K}(m)$ is a sequence of linear subspaces.
$F_n\mathcal{K}(m)$ is spanned by $\{x^{i}y^{j}w^k\alpha_{I}\beta_J\gamma_K\mid |I\cup J\cup K|\leq n\}$.
Thus we regard $\mathcal{K}(m)/F_{m-1}\mathcal{K}(m)$ the linear subspace spanned by $\{x^{i}y^{j}w^k\alpha_{I}\beta_J\gamma_K\mid |I\cup J\cup K|= m\}$.
Since $\sigma_i$ preserves the $D_6$-action and the bigraded module structure, $F_{n}(\mathcal{K}(m)^{D_6})=(F_{n}\mathcal{K}(m))^{D_6}$ is a bigraded linear subspace of $\mathcal{K}(m)^{D_6}$.
We compute the Hilbert-Poincar\'e series of $\mathcal{K}(m)^{D_6}/F_{m-1}\mathcal{K}(m)^{D_6}$.
\begin{lemma}\label{Poincare_G2_filter}
  There are equations
  \begin{align*}
    &P(\mathcal{K}(3)^{D_6}/F_{2}\mathcal{K}(3)^{D_6};s,t)\\
    =&s^{12}t^6 + 3s^{10}t^5 + 3s^2t^5 + 3s^{12}t^4 + 3s^8t^4 + 3s^4t^4 + 3t^4 + 3s^{10}t^3 + 2s^6t^3 + 3s^2t^3\\
    &P(\mathcal{K}(4)^{D_6}/F_{3}\mathcal{K}(4)^{D_6};s,t)\\
    =&t^8 + 4s^{10}t^7 + 4s^2t^7 + 6s^{12}t^6 + 6s^8t^6 + 6s^4t^6 + 6t^6 + 12s^{10}t^5 + 8s^6t^5 + 12s^2t^5\\
    +& 3s^{12}t^4 + 5s^8t^4+ 5s^4t^4 + 3t^4\\
    &P(\mathcal{K}(5)^{D_6}/F_{4}\mathcal{K}(5)^{D_6};s,t)\\
    =&s^{12}t^{10} + 5s^{10}t^9 + 5s^2t^9 + 10s^{12}t^8 + 10s^8t^8 + 10s^4t^8 + 10t^8 + 30s^{10}t^7 + 20s^6t^7\\
    +& 30s^2t^7+ 15s^{12}t^6 + 25s^8t^6 + 25s^4t^6 + 15t^6 + 11s^{10}t^5 + 10s^6t^5 + 11s^2t^5\\
    &P(\mathcal{K}(6)^{D_6}/F_{5}\mathcal{K}(6)^{D_6};s,t)\\
    =&t^{12} + 6s^{10}t^{11} + 6s^2t^{11} + 15s^{12}t^{10} + 15s^8t^{10} + 15s^4t^{10} + 15t^{10} + 60s^{10}t^9 + 40s^6t^9\\
    +& 60s^2t^9 + 45s^{12}t^8 +75s^8t^8 + 75s^4t^8 + 45t^8 + 66s^{10}t^7 + 60s^6t^7 + 66s^2t^7 + 11s^{12}t^6\\
    +& 21s^8t^6 + 21s^4t^6 + 11t^6
  \end{align*}
\end{lemma}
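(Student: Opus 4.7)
My plan is to compute the equivariant character of $D_6$ on the bigraded vector space $\mathcal{K}(m)/F_{m-1}\mathcal{K}(m)$ and then apply Molien's formula. First, the filtration $F_n\mathcal{K}(m)$ is $D_6$-stable because the $D_6$-action is diagonal in the index $j \in \{1,\dots,m\}$ (it does not permute indices), and since $|D_6|=12$ is invertible in $\Q$ taking invariants is exact, so
\[
\mathcal{K}(m)^{D_6}/F_{m-1}\mathcal{K}(m)^{D_6} \;=\; \bigl(\mathcal{K}(m)/F_{m-1}\mathcal{K}(m)\bigr)^{D_6}.
\]
The quotient $\mathcal{K}(m)/F_{m-1}\mathcal{K}(m)$ is spanned by those monomials in which every index $1,\dots,m$ appears among the exterior generators. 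Writing $W$ for the standard $2$-dimensional reflection representation of $D_6$---realized on $\{(\alpha,\beta,\gamma)\mid\alpha+\beta+\gamma=0\}$, with $\mathfrak{S}_3\subset D_6$ permuting coordinates and $a^3$ acting as $-\mathrm{id}$---this yields an isomorphism of bigraded $D_6$-modules
\[
\mathcal{K}(m)/F_{m-1}\mathcal{K}(m) \;\cong\; H^*(G_2/T) \otimes (N^+)^{\otimes m},
\]
where $N^+ = \Lambda^{\geq 1}(W)$.

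Next, for $g \in D_6$ with eigenvalues $\lambda_1,\lambda_2$ on $W$, the Chevalley--Shephard--Todd character formula applied to the reflection group $D_6$ (whose fundamental degrees are $2$ and $6$) gives
\[
\mathrm{ch}_{H^*(G_2/T)}(g;s) \;=\; \frac{(1-s^4)(1-s^{12})}{(1-s^2\lambda_1)(1-s^2\lambda_2)},
\]
while a direct computation on $\Lambda^*W$ yields
\[
\mathrm{ch}_{N^+}(g;t) \;=\; \det(I+tg\vert_W)-1 \;=\; (\lambda_1+\lambda_2)\,t + \lambda_1\lambda_2\,t^2.
\]
Since characters multiply on tensor products, Molien's formula reads
\[
P\bigl(\mathcal{K}(m)^{D_6}/F_{m-1}\mathcal{K}(m)^{D_6};\,s,t\bigr) \;=\; \frac{1}{12}\sum_{g\in D_6}\mathrm{ch}_{H^*(G_2/T)}(g;s)\cdot\mathrm{ch}_{N^+}(g;t)^m.
\]

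Finally, I would enumerate the six conjugacy classes of $D_6$ together with their eigenvalue pairs on $W$: the identity ($\{1,1\}$, size $1$), the central involution $a^3$ ($\{-1,-1\}$, size $1$), the order-$6$ rotations $\{a,a^5\}$ ($\{e^{\pm i\pi/3}\}$, size $2$), the order-$3$ rotations $\{a^2,a^4\}$ ($\{e^{\pm 2i\pi/3}\}$, size $2$), and the two reflection classes (each with eigenvalues $\{1,-1\}$, giving six elements in total). For each class the cyclotomic denominator divides $(1-s^4)(1-s^{12})$, so every summand collapses to a polynomial in $s$ and $t$, and the complex-conjugate eigenvalue pairs combine to give rational coefficients. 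Substituting $m=3,4,5,6$, expanding, and collecting coefficients produces the four stated polynomials. The main obstacle is purely bookkeeping: each $m$ requires expanding $\bigl((\lambda_1+\lambda_2)t+\lambda_1\lambda_2 t^2\bigr)^m$ for the five distinct eigenvalue pairs, multiplying by the corresponding $s$-polynomial, and summing with conjugacy-class weights. This is most safely performed in a computer algebra system; no further theoretical input is required beyond Chevalley--Shephard--Todd and the tensor-product identification above.
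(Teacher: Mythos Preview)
Your argument is correct. The identification $\mathcal{K}(m)/F_{m-1}\mathcal{K}(m)\cong H^*(G_2/T)\otimes (N^+)^{\otimes m}$ with $N^+=\Lambda^{\ge 1}W$ is exactly right, the graded-character formula for the coinvariant algebra follows from Chevalley's decomposition $S(W)\cong H^*(G_2/T)\otimes S(W)^{D_6}$, and your Molien sum is set up correctly.

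The paper takes a closely related but slightly less direct route: it first computes $P(\mathcal{K}(k)^{D_6};s,t)$ for $k=1,\dots,m$ via the Ramras--Stafa formula
\[
P(\mathcal{K}(k)^{D_6};s,t)=\frac{1}{12}\,(1-s^4)(1-s^{12})\sum_{g\in D_6}\frac{\det(1+tg)^k}{\det(1-s^2g)},
\]
and then obtains the quotient series by an alternating binomial sum, e.g.\ $P(\mathcal{K}(4)^{D_6}/F_3)=P(\mathcal{K}(4)^{D_6})-4P(\mathcal{K}(3)^{D_6})+6P(\mathcal{K}(2)^{D_6})-4P(\mathcal{K}(1)^{D_6})+1$. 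Your formula is what one gets after collapsing that inclusion--exclusion with the binomial identity
\[
\sum_{k=0}^{m}(-1)^{m-k}\binom{m}{k}\det(1+tg)^k=(\det(1+tg)-1)^m,
\]
so the two computations are literally the same sum over $D_6$, just organised differently. Your route has the advantage of going straight to the quotient without tabulating the six intermediate Poincar\'e series; the paper's route has the advantage that those intermediate series are available elsewhere in the paper anyway.
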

\begin{proof}
  By Theorem \ref{bigraded_Poincare_formula}, we obtain the following equations
  \begin{align*}
    &P(\mathcal{K}(1)^{D_6};s,t)=s^{12}t^2 + s^{10}t + s^2t + 1\\
    &P(\mathcal{K}(2)^{D_6};s,t)\\
    =&t^4 + 2s^{10}t^3 + 2s^2t^3 + 3s^{12}t^2 + s^8t^2 + s^4t^2 + t^2 + 2s^{10}t + 2s^2t + 1\\
    &P(\mathcal{K}(3)^{D_6};s,t)\\
    =&s^{12}t^6 + 3s^{10}t^5 + 3s^2t^5 + 3s^{12}t^4 + 3s^8t^4 + 3s^4t^4 + 6t^4 + 9s^{10}t^3+ 2s^6t^3 + 9s^2t^3\\
    +& 6s^{12}t^2 + 3s^8t^2 + 3s^4t^2 + 3t^2+ 3s^{10}t + 3s^2t + 1\\
    &P(\mathcal{K}(4)^{D_6};s,t)\\
    =&t^8 + 4s^{10}t^7 + 4s^2t^7 + 10s^{12}t^6 + 6s^8t^6 + 6s^4t^6 + 6t^6 + 24s^{10}t^5 + 8s^6t^5 + 24s^2t^5\\
    +& 15s^{12}t^4 + 17s^8t^4 + 17s^4t^4 + 21t^4+ 24s^{10}t^3+ 8s^6t^3 + 24s^2t^3 + 10s^{12}t^2 + 6s^8t^2\\
    +& 6s^4t^2 + 6t^2 + 4s^{10}t + 4s^2t + 1\\
    &P(\mathcal{K}(5)^{D_6};s,t)\\
    =&s^{12}t^{10} + 5s^{10}t^9 + 5s^2t^9 + 10s^{12}t^8 + 10s^8t^8 + 10s^4t^8 + 15t^8+ 50s^{10}t^7 + 20s^6t^7\\
    +& 50s^2t^7 + 55s^{12}t^6 + 55s^8t^6 + 55s^4t^6 + 45t^6 + 101s^{10}t^5 + 50s^6t^5 + 101s^2t^5\\
    +& 45s^{12}t^4 + 55s^8t^4 + 55s^4t^4 + 55t^4 + 50s^{10}t^3 + 20s^6t^3 + 50s^2t^3 + 15s^{12}t^2\\
    +& 10s^8t^2+ 10s^4t^2 + 10t^2 + 5s^{10}t + 5s^2t + 1\\
    &P(\mathcal{K}(6)^{D_6};s,t)\\
    =&t^{12} + 6s^{10}t^{11} + 6s^2t^{11} + 21s^{12}t^10 + 15s^8t^{10} + 15s^4t^{10} + 15t^{10} + 90s^{10} t^9+ 40s^6t^9\\
    +& 90s^2t^9 + 105s^{12}t^8 + 135s^8t^8+ 135s^4t^8 + 120t^8 + 306s^{10}t^7 + 180s^6t^7 + 306s^2t^7\\
    +& 211s^{12}t^6 + 261s^8t^6 + 261s^4t^6 + 191t^6 + 306s^{10}t^5 + 180s^6t^5 + 306s^2t^5 + 105s^{12}t^4\\
    +& 135s^8t^4 + 135s^4t^4 + 120t^4 + 90s^{10}t^3 + 40s^6t^3 + 90s^2 t^3+ 21s^{12}t^2 + 15s^8t^2\\
    +& 15s^4t^2 + 15t^2 + 6s^{10}t + 6s^2t + 1.\\
  \end{align*}
  Since $F_n\mathcal{K}(m)^{D_6}$ is the linear subspace of $\mathcal{K}(m)^{D_6}$ spanned by linear combinations of $x^{i}y^{j}w^k\alpha_{I}\beta_J\gamma_K$ with $|I\cup J\cup K|\leq n$, there are equations
  \begin{align*}
    &P(\mathcal{K}(2)^{D_6}/F_{1}\mathcal{K}(2)^{D_6};s,t)=P(\mathcal{K}(2)^{D_6};s,t)-2P(\mathcal{K}(1)^{D_6};s,t)+1\\
    &P(\mathcal{K}(3)^{D_6}/F_{2}\mathcal{K}(3)^{D_6};s,t)\\
    =&P(\mathcal{K}(3)^{D_6};s,t)-3P(\mathcal{K}(2)^{D_6};s,t)+3P(\mathcal{K}(1)^{D_6};s,t)-1\\
    &P(\mathcal{K}(4)^{D_6}/F_{3}\mathcal{K}(4)^{D_6};s,t)\\
    =&P(\mathcal{K}(4)^{D_6};s,t)-4P(\mathcal{K}(3)^{D_6};s,t)+6P(\mathcal{K}(2)^{D_6};s,t)-4P(\mathcal{K}(1)^{D_6};s,t)+1\\
    &P(\mathcal{K}(5)^{D_6}/F_{4}\mathcal{K}(5)^{D_6};s,t)\\
    =&P(\mathcal{K}(5)^{D_6};s,t)-5P(\mathcal{K}(4)^{D_6};s,t)+10P(\mathcal{K}(3)^{D_6};s,t)-10P(\mathcal{K}(2)^{D_6};s,t)\\
    +&5P(\mathcal{K}(1)^{D_6};s,t)-1\\
    &P(\mathcal{K}(6)^{D_6}/F_{5}\mathcal{K}(6)^{D_6};s,t)\\
    =&P(\mathcal{K}(6)^{D_6};s,t)-6P(\mathcal{K}(5)^{D_6};s,t)+15P(\mathcal{K}(4)^{D_6};s,t)-20P(\mathcal{K}(3)^{D_6};s,t)\\
    +&15P(\mathcal{K}(2)^{D_6};s,t)-6P(\mathcal{K}(1)^{D_6};s,t)+1.\\
  \end{align*}
  By combining them, we obtain this lemma. 
\end{proof}

We consider how to use this filtration.
For example we consider the $z(2,\{1,2,3\})$ case.
$z(2,\{1,2,3\})$ is in $(2,3)$ degree part of $\mathcal{K}(3)^{D_6}/F_{2}\mathcal{K}(3)^{D_6}$, $(\mathcal{K}(3)^{D_6}/F_{2}\mathcal{K}(3)^{D_6})^{(i,j)}$.
By Lemma \ref{Poincare_G2_filter}, the dimension of this part is $3$.
If this part is spanned by $z(2,\{1\})z(1,\{2,3\}), z(2,\{2\})z(1,\{1,3\}), z(2,\{3\})z(1,\{1,2\}) \in \mathcal{L}(3)$, we obtain that $z(2,\{1,2,3\})$ is linear combination of them and in $\mathcal{L}(3)$.

To confirm the assumption of Lemma \ref{basis_to_check}, we will show that for pairs $(i,j, m)$ that corresponds to the elements in the assumption, $(i,j)$ degree part of $\mathcal{K}(m)^{D_6}/F_{m-1}\mathcal{K}(m)^{D_6}$ is spanned by the elements in $L(m)$.
Here is the correspondence table for the generators in Lemma \ref{basis_to_check} and its degree.
\begin{table}[H]
  \centering
  \begin{tabular}{p{4.5cm}p{4.5cm}l}
    \hline
    Element&Corresponding part&$(m,i,j)$\\\hline
    $z(2,\{1,2,3\})$&$(\mathcal{K}(3)^{D_6}/F_{2}\mathcal{K}(3)^{D_6})^{(2,3)}$&$(3,2,3)$\\
    $z(4,\{1,2,3\})$&$(\mathcal{K}(3)^{D_6}/F_{2}\mathcal{K}(3)^{D_6})^{(6,3)}$&$(3,6,3)$\\
    $z(1,\{1,2,3\})z(4,\emptyset)$&$(\mathcal{K}(3)^{D_6}/F_{2}\mathcal{K}(3)^{D_6})^{(6,3)}$&$(3,6,3)$\\
    $z(2,\{1,2\})z(3,\{3\})$&$(\mathcal{K}(3)^{D_6}/F_{2}\mathcal{K}(3)^{D_6})^{(6,3)}$&$(3,6,3)$\\
    $z(6,\{1,2,3\})$&$(\mathcal{K}(3)^{D_6}/F_{2}\mathcal{K}(3)^{D_6})^{(10,3)}$&$(3,10,3)$\\
    $z(2,\{1,2\})z(5,\{3\})$&$(\mathcal{K}(3)^{D_6}/F_{2}\mathcal{K}(3)^{D_6})^{(10,3)}$&$(3,10,3)$\\
    $z(1,\{1,2,3\})z(1,\{2,3,4\})$&$(\mathcal{K}(4)^{D_6}/F_{3}\mathcal{K}(4)^{D_6})^{(0,6)}$&$(4,0,6)$\\
    $z(1,\{1,2,3\})z(1,\{3,4,5\})$&$(\mathcal{K}(5)^{D_6}/F_{4}\mathcal{K}(5)^{D_6})^{(0,6)}$&$(5,0,6)$\\
    $z(1,\{1,2,3\})z(1,\{4,5,6\})$&$(\mathcal{K}(6)^{D_6}/F_{5}\mathcal{K}(6)^{D_6})^{(0,6)}$&$(6,0,6)$\\
    $z(1,\{1,2,3\})z(2,\{2,3\})$&$(\mathcal{K}(3)^{D_6}/F_{2}\mathcal{K}(3)^{D_6})^{(2,5)}$&$(3,2,5)$\\
    $z(1,\{1,2,3\})z(2,\{3,4\})$&$(\mathcal{K}(4)^{D_6}/F_{3}\mathcal{K}(4)^{D_6})^{(2,5)}$&$(4,2,5)$\\
    $z(1,\{1,2,3\})z(2,\{4,5\})$&$(\mathcal{K}(4)^{D_6}/F_{3}\mathcal{K}(4)^{D_6})^{(2,5)}$&$(5,2,5)$\\
    $z(1,\{1,2,3\})z(3,\{3\})$&$(\mathcal{K}(3)^{D_6}/F_{2}\mathcal{K}(3)^{D_6})^{(4,4)}$&$(3,4,4)$\\
    $z(2,\{1,2\})z(2,\{2,3\})$&$(\mathcal{K}(3)^{D_6}/F_{2}\mathcal{K}(3)^{D_6})^{(4,4)}$&$(3,4,4)$\\
    $z(1,\{1,2,3\})z(3,\{4\})$&$(\mathcal{K}(4)^{D_6}/F_{3}\mathcal{K}(4)^{D_6})^{(4,4)}$&$(4,4,4)$\\
    $z(2,\{1,2\})z(2,\{3,4\})$&$(\mathcal{K}(4)^{D_6}/F_{3}\mathcal{K}(4)^{D_6})^{(4,4)}$&$(4,4,4)$\\
    $z(1,\{1,2,3\})z(5,\{3\})$&$(\mathcal{K}(3)^{D_6}/F_{2}\mathcal{K}(3)^{D_6})^{(8,4)}$&$(3,8,4)$\\
    $z(1,\{1,2,3\})z(5,\{4\})$&$(\mathcal{K}(4)^{D_6}/F_{3}\mathcal{K}(4)^{D_6})^{(8,4)}$&$(4,8,4)$\\\hline
  \end{tabular}
\end{table}

To prove that the dimension of these part coincides with the dimension of the same degree part of $\mathcal{L}(m)$, we use the following lemma.
\begin{lemma}\label{basis_K}
  The set
  \[
    \{x^iy^j\alpha_I\beta_J\mid 0\leq i\leq 5,0\leq j \leq 1,I,J\subset \{1,\dots m\}\}
  \]
  is a basis of $\mathcal{K}(m)$.
\end{lemma}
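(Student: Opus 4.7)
The plan is to treat the two tensor factors of
\[
  \mathcal{K}(m) = \Q[x,y,w]/J \,\otimes\, \bigotimes_{j=1}^m \Lambda(\alpha_j,\beta_j,\gamma_j)/K_j
\]
separately and take the product basis. For each exterior factor the relation $\alpha_j + \beta_j + \gamma_j = 0$ expresses $\gamma_j$ in terms of $\alpha_j,\beta_j$, so $\Lambda(\alpha_j,\beta_j,\gamma_j)/K_j \cong \Lambda(\alpha_j,\beta_j)$ with basis $\{1,\alpha_j,\beta_j,\alpha_j\beta_j\}$. Tensoring over $j$ yields $\{\alpha_I\beta_J : I,J \subset \{1,\dots,m\}\}$ as a basis of the exterior part, of cardinality $4^m$.

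For the polynomial factor $\Q[x,y,w]/J \cong H^*(G_2/T)$, I would first eliminate $w$ via $w = -x-y$, reducing to $\Q[x,y]$ modulo the images of the remaining two generators of $J$. The image of $x^2+y^2+w^2$ is $2(x^2+xy+y^2)$, so the relation $y^2 \equiv -x^2 - xy$ allows every monomial to be rewritten using only elements of $\{x^i, x^iy : i \geq 0\}$. A short direct expansion using $(x+y)^2 \equiv xy$ then gives $y^6 \equiv x^6$ and $(x+y)^6 \equiv x^6$ in this quotient, so the image of $x^6+y^6+w^6$ collapses to $3x^6$, forcing $x^6 = 0$. The spanning set is thereby cut down to the $12$ elements $\{x^iy^j : 0 \leq i \leq 5,\ 0 \leq j \leq 1\}$.

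Since $\dim H^*(G_2/T) = |W(G_2)| = 12$, this spanning set has the correct cardinality and is therefore a basis of the polynomial factor. Taking the product of this basis with the exterior basis produces the claimed set, whose cardinality $12 \cdot 4^m$ matches $\dim \mathcal{K}(m)$. The only nontrivial step is verifying that $x^6+y^6+w^6$ collapses to $3x^6$ modulo the quadratic relation; this is a short expansion rather than a genuine obstacle, so the proof is essentially a bookkeeping argument organized along the tensor factorization of $\mathcal{K}(m)$.
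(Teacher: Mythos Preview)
Your proof is correct and follows the same tensor-factorization approach as the paper; the paper simply asserts that $\{x^iy^j:0\le i\le 5,\ 0\le j\le 1\}$ is a basis of $H^*(G_2/T)$ and that $\{\alpha_I\beta_J\}$ is a basis of the exterior factor, whereas you supply the explicit verification (elimination of $w$, the reduction $y^2\equiv -x^2-xy$, and the collapse $x^6+y^6+w^6\equiv 3x^6$). No gap, and no essential difference in strategy.
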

\begin{proof}
  The set 
  \[
    \{x^iy^j\mid 0\leq i\leq 5,0\leq j \leq 1\}
  \]
  is a basis of $H^*(G_2/T)\cong \Q[x,y,z]/J$.
  The set $\{\alpha_I\beta_I \mid I,J\subset \{1,\dots m\}\}$ is a basis of 
  \[
    \bigotimes_{1\leq j\leq m} \Lambda(\alpha_j, \beta_j, \gamma_j)/K_j.
  \]
  Since $\mathcal{K}(m)$ is the tensor product of these two algebra, we obtain this lemma.
\end{proof}
By using this basis, the generators of $\mathcal{L}(m)$ can be rewrite as follows; 
\begin{align*}
  &z(2,\{1\})=(2x+y)\alpha_1 +  (x+2y) \beta_1,\\
  &z(1,\{1,2\})=2\alpha_1\alpha_2 +  2\beta_1 \beta_2 + \alpha_1 \beta_2 +\beta_1 \alpha_2, \\
  &z(6,\{1\})=(x^5-x^4y) \alpha_1 +  (-x^5-2x^4y) \beta_1 , \\
  &z(5,\{1,2\})=-x^3y \alpha_1\alpha_2 -x^4 \beta_1 \beta_2 - (x^4+ x^3y) (\alpha_1 \beta_2 +\beta_1 \alpha_2), \\
  &z(4,\{1,2,3\})\\
  =&-x^3\alpha_1 \alpha_2 \beta_3 -x^3\alpha_1 \beta_2 \alpha_3 -x^3\beta_1 \alpha_2 \alpha_3-x^3\alpha_1 \beta_2 \beta_3 -x^3\beta_1 \alpha_2 \beta_3\\
  -&x^3\beta_1 \beta_2 \alpha_3, \\ 
  &z(3,\{1,2,3,4\})\\
   =& (x^2+xy) \alpha_1 \alpha_2 \alpha_3 \alpha_4 - x^2 \beta_1 \beta_2 \beta_3 \beta_4 + xy \alpha_1 \alpha_2 \alpha_3 \beta_4 + xy \alpha_1 \alpha_2 \beta_3 \alpha_4 \\
  +& xy \alpha_1 \beta_2 \alpha_3 \alpha_4 +xy \beta_1 \alpha_2 \alpha_3 \alpha_4 + xy \alpha_1 \alpha_2 \beta_3 \beta_4 + xy \alpha_1 \beta_2 \alpha_3 \beta_4 \\
  +& xy \beta_1 \alpha_2 \alpha_3 \beta_4 +xy \alpha_1 \beta_2 \beta_3 \alpha_4 +xy \beta_1 \alpha_2 \beta_3 \alpha_4 + xy \beta_1 \beta_2 \alpha_3 \alpha_4 \\
  +& xy \alpha_1 \beta_2 \beta_3 \beta_4 + xy \beta_1 \alpha_2 \beta_3 \beta_4 + xy \beta_1 \beta_2 \alpha_3 \beta_4 +xy \beta_1 \beta_2 \beta_3 \alpha_4, \\
  &z(2,\{1,2,3,4,5\}) \\
  =& (2x+y) \alpha_1 \alpha_2 \alpha_3 \alpha_4 \alpha_5 + (x+2y) \beta_1 \beta_2 \beta_3 \beta_4 \beta_5 + (x+y) \alpha_1 \alpha_2 \alpha_3 \alpha_4 \beta_5 \\
  +& (x+y) \alpha_1 \alpha_2 \alpha_3 \beta_4 \alpha_5 + (x+y) \alpha_1 \alpha_2 \beta_3 \alpha_4 \alpha_5 + (x+y) \alpha_1 \beta_2 \alpha_3 \alpha_4 \alpha_5 \\
  +& (x+y) \beta_1 \alpha_2 \alpha_3 \alpha_4 \alpha_5 + (x+y) \alpha_1 \alpha_2 \alpha_3 \beta_4 \beta_5 + (x+y) \alpha_1 \alpha_2 \beta_3 \alpha_4 \beta_5 \\
  +& (x+y) \alpha_1 \beta_2 \alpha_3 \alpha_4 \beta_5 + (x+y) \beta_1 \alpha_2 \alpha_3 \alpha_4 \beta_5 + (x+y) \alpha_1 \alpha_2 \beta_3 \beta_4 \alpha_5 \\
  +& (x+y) \alpha_1 \beta_2 \alpha_3 \beta_4 \alpha_5 + (x+y) \beta_1 \alpha_2 \alpha_3 \beta_4 \alpha_5 + (x+y) \alpha_1 \beta_2 \beta_3 \alpha_4 \alpha_5 \\
  +& (x+y) \beta_1 \alpha_2 \beta_3 \alpha_4 \alpha_5 + (x+y) \beta_1 \beta_2 \alpha_3 \alpha_4 \alpha_5 + (x+y) \alpha_1 \alpha_2 \beta_3 \beta_4 \beta_5 \\
  +& (x+y) \alpha_1 \beta_2 \alpha_3 \beta_4 \beta_5 + (x+y) \beta_1 \alpha_2 \alpha_3 \beta_4 \beta_5 + (x+y) \alpha_1 \beta_2 \beta_3 \alpha_4 \beta_5 \\
  +& (x+y) \beta_1 \alpha_2 \beta_3 \alpha_4 \beta_5 + (x+y) \beta_1 \beta_2 \alpha_3 \alpha_4 \beta_5 + (x+y) \alpha_1 \beta_2 \beta_3 \beta_4 \alpha_5 \\
  +& (x+y) \beta_1 \alpha_2 \beta_3 \beta_4 \alpha_5 + (x+y) \beta_1 \beta_2 \alpha_3 \beta_4 \alpha_5 + (x+y) \beta_1 \beta_2 \beta_3 \alpha_4 \alpha_5 \\
  +& (x+y) \alpha_1 \beta_2 \beta_3 \beta_4 \beta_5 + (x+y) \beta_1 \alpha_2 \beta_3 \beta_4 \beta_5 + (x+y) \beta_1 \beta_2 \alpha_3 \beta_4 \beta_5 \\
  +& (x+y) \beta_1 \beta_2 \beta_3 \alpha_4 \beta_5 + (x+y) \beta_1 \beta_2 \beta_3 \beta_4 \alpha_5, \\
  &z(1,\{1,2,3,4,5,6\})\\
  =& \alpha_1 \alpha_2 \alpha_3  \alpha_4 \alpha_5 \alpha_6 + \beta_1 \beta_2 \beta_3 \beta_4\beta_5 \beta_6\\
  +& (\alpha_1+\beta_1)(\alpha_2+\beta_2)(\alpha_3+\beta_3)(\alpha_4+\beta_4)(\alpha_5+\beta_5)(\alpha_6+\beta_6).
\end{align*}

\begin{lemma}\label{other_cases_generator}
  For following pairs of $m,i,j$, $(\mathcal{K}(m)^{D_6}/F_{m-1}\mathcal{K}(m)^{D_6})^{(i,j)}$ is spanned by the products of the generators of $\mathcal{L}(m)$.
  \begin{enumerate}
    \item $(m,i,j)=(3,2,3)$
    \item $(m,i,j)=(3,6,3)$
    \item $(m,i,j)=(3,10,3)$
    \item $(m,i,j)=(3,4,4)$
    \item $(m,i,j)=(4,4,4)$
    \item $(m,i,j)=(3,2,5)$
    \item $(m,i,j)=(4,2,5)$
    \item $(m,i,j)=(5,2,5)$
    \item $(m,i,j)=(4,0,6)$
    \item $(m,i,j)=(5,0,6)$
    \item $(m,i,j)=(6,0,6)$
    \item $(m,i,j)=(3,8,4)$
    \item $(m,i,j)=(4,8,4)$.
  \end{enumerate}
\end{lemma}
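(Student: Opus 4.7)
The plan is to verify each of the thirteen triples $(m,i,j)$ by the same mechanical procedure: Lemma \ref{Poincare_G2_filter} pins down $\dim(\mathcal{K}(m)^{D_6}/F_{m-1}\mathcal{K}(m)^{D_6})^{(i,j)}$, and I must exhibit that many elements in the image of $\mathcal{L}(m)$ which are linearly independent in this quotient. Since $\mathcal{L}(m)$ is generated by the list in Theorem \ref{G_2_generator}, the candidates are the finitely many products of these generators whose bidegree equals $(i,j)$.

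The first step, in each case, is to enumerate these candidates. A product $\prod z(d_k+1, I_k)$ in bidegree $(i,j)$ must satisfy $\sum (2d_k) = i$ and $\sum |I_k| = j$, and only those products whose index-supports $\bigcup I_k$ exhaust all of $\{1,\dots,m\}$ can have nonzero image in the quotient $\mathcal{K}(m)/F_{m-1}\mathcal{K}(m)$. The second step is to rewrite each such product in the explicit monomial basis $\{x^iy^j\alpha_I\beta_J\mid 0\leq i\leq 5, 0\leq j\leq 1\}$ provided by Lemma \ref{basis_K}, using the expansions already displayed for the generators of $\mathcal{L}(m)$ (e.g.\ the long expressions for $z(2,\{1,\dots,5\})$ and $z(1,\{1,\dots,6\})$). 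The third step is a routine linear algebra check: arrange the coefficient vectors as the rows of a matrix indexed by the basis monomials of bidegree $(i,j)$ with full index support, reduce modulo $F_{m-1}$, and verify that the rank equals the dimension predicted by Lemma \ref{Poincare_G2_filter}.

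To keep the calculation tractable I would exploit the $\mathfrak{S}_m$-action permuting the indices $1,\dots,m$: this action commutes with $D_6$ and preserves the filtration, so it suffices to understand one representative per $\mathfrak{S}_m$-orbit of candidate products, and to track how many orbit-elements appear. The cases with small $m$ (such as $(3,2,3)$, $(3,6,3)$, $(3,10,3)$) only have a handful of candidate products and are essentially by inspection after applying Lemma \ref{relation_between_z_G2} to eliminate redundant expressions; the cases $(5,2,5)$, $(5,0,6)$, $(6,0,6)$ involve many more products and require the explicit expansions above.

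The main obstacle is the bookkeeping in the largest cases, especially $(6,0,6)$, where the target has dimension $15$ and the candidate products (such as $z(1,\{1,2,3\})z(1,\{4,5,6\})$, $z(1,\{1,2\})z(1,\{3,4\})z(1,\{5,6\})$, and their $\mathfrak{S}_6$-translates) have many monomial terms. Organizing the matrix by $\mathfrak{S}_m$-orbit and by the coefficient pattern in $x,y$ keeps the computation finite and manageable, but the routine verification that rank equals dimension in every row of the correspondence table still occupies the bulk of the proof.
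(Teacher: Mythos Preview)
Your plan is essentially identical to the paper's: for each triple $(m,i,j)$ the paper reads off the target dimension from Lemma \ref{Poincare_G2_filter}, writes down that many products of $\mathcal{L}(m)$-generators with full index support, expands them in the basis of Lemma \ref{basis_K}, and checks linear independence by inspecting coefficients of well-chosen basis monomials (exploiting the $\mathfrak{S}_m$-action exactly as you describe to avoid redoing symmetric cases).

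Two small slips to fix before you execute the plan. First, the dimension of $(\mathcal{K}(6)^{D_6}/F_{5}\mathcal{K}(6)^{D_6})^{(0,6)}$ is $11$, not $15$ (you may be thinking of the $(5,0,6)$ case, which is $15$). Second, $z(1,\{1,2,3\})z(1,\{4,5,6\})$ is \emph{not} a product of generators of $\mathcal{L}(m)$: the element $z(1,I)$ lies in $\mathcal{L}(m)$ only when $|I|=2$ (from the family $z(3-|I|,I)$) or $|I|=6$ (from $z(7-|I|,I)$), so $z(1,\{1,2,3\})$ is not available. The correct candidates in the $(6,0,6)$ case are $z(1,\{1,2,3,4,5,6\})$ together with the triple products $z(1,\{i_1,i_2\})z(1,\{i_3,i_4\})z(1,\{i_5,i_6\})$, and the paper shows eleven of these are independent by an $11\times 11$ coefficient matrix.
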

We will prove each case of this lemma.
\begin{proof}[Proof of the case $(m,i,j)=(3,2,3)$]

There is an equation
\begin{align*}
  &z(2,\{1\})z(1,\{2,3\})\\
  =&((2x+y)\alpha_1 +  (x+2y) \beta_1)(2\alpha_2\alpha_3 +  2\beta_2 \beta_3 + \alpha_2\beta_3 +\beta_2 \alpha_3 )\\
  =&(4x+2y)\alpha_1 \alpha_2 \alpha_3  + (4x+2y)\alpha_1 \beta_2 \beta_3 + (2x+y)\alpha_1 \alpha_2\beta_3+ (2x+y)\alpha_1 \beta_2 \alpha_3 \\
  +&(2x+4y) \beta_1 \beta_2 \beta_3 + (2x+4y) \beta_1 \alpha_2\alpha_3 + (x+2y) \beta_1 \alpha_2\beta_3+ (x+2y) \beta_1 \beta_2 \alpha_3.
\end{align*}
Similarly,
\begin{align*}
  &-z(2,\{2\})z(1,\{1,3\})\\
  =&(4x+2y)\alpha_1 \alpha_2 \alpha_3 + (x+2y) \alpha_1 \beta_2 \beta_3  +(2x+y)\alpha_1 \alpha_2\beta_3+  (2x+4y) \alpha_1 \beta_2 \alpha_3  \\
  +&(2x+4y) \beta_1 \beta_2 \beta_3 + (2x+y)\beta_1 \alpha_2\alpha_3 +(4x+2y)\beta_1 \alpha_2\beta_3 + (x+2y) \beta_1 \beta_2 \alpha_3,
\end{align*}
\begin{align*}
  &z(2,\{3\})z(1,\{1,2\})\\
  =&(4x+2y)\alpha_1 \alpha_2 \alpha_3  + (x+2y) \alpha_1 \beta_2 \beta_3  + (2x+4y) \alpha_1 \alpha_2\beta_3 + (2x+y)\alpha_1 \beta_2 \alpha_3 \\
  +&(2x+4y) \beta_1 \beta_2 \beta_3  + (2x+y)\beta_1 \alpha_2\alpha_3 + (x+2y) \beta_1 \alpha_2\beta_3  + (4x+2y)\beta_1 \beta_2 \alpha_3.
\end{align*}
We assume that $a_1z(2,\{1\})z(1,\{2,3\}) -a_2 z(2,\{2\})z(1,\{1,3\}) + a_3z(2,\{3\})z(1,\{1,2\})=0 $ for some $a_1,a_2,a_3 \in \Q$.
By looking at the coefficient of $\alpha_1 \beta_2 \beta_3 ,\alpha_1 \alpha_2\beta_3$, we obtain the equations
\begin{align*}
  (4x+2y)a_1 + (x+2y)a_2 + (x+2y)a_3&=0\\
  (2x+y)a_1 + (2x+y)a_2 + (2x+4y)a_3&=0.
\end{align*} 
Thus $a_1=a_2=a_3=0$ and we obtain that these are linearly independent.
Since the dimension of $(\mathcal{K}(3)^{D_6}/F_{2}\mathcal{K}(3)^{D_6})^{(2,3)}$ is $3$, this pair is a basis of this part.
\end{proof}

\begin{proof}[Proof of the case $(m,i,j)=(3,10,3)$]
  
  There is an equation
\begin{align*}
  &z(6,\{1\})z(1,\{2,3\})\\
  =&((x^5-x^4y)\alpha_1 +  (-x^5-2x^4y) \beta_1)(2\alpha_2\alpha_3 +  2\beta_2 \beta_3 + \alpha_2\beta_3 +\beta_2 \alpha_3 )\\
  =&2(x^5-x^4y)\alpha_1 \alpha_2 \alpha_3  + 2(x^5-x^4y)\alpha_1 \beta_2 \beta_3 + (x^5-x^4y)\alpha_1 \alpha_2\beta_3\\
  +& (x^5-x^4y)\alpha_1 \beta_2 \alpha_3 +2(-x^5-2x^4y)\beta_1 \beta_2 \beta_3 + 2(-x^5-2x^4y) \beta_1 \alpha_2\alpha_3\\
  +& (-x^5-2x^4y) \beta_1 \alpha_2\beta_3+ (-x^5-2x^4y) \beta_1 \beta_2 \alpha_3.
\end{align*}
Similarly, 
\begin{align*}
  &-z(6,\{2\})z(1,\{1,3\})\\
  =&2(x^5-x^4y)\alpha_1 \alpha_2 \alpha_3 + (-x^5-2x^4y) \alpha_1 \beta_2 \beta_3  +(x^5-x^4y)\alpha_1 \alpha_2\beta_3\\
  +&2(-x^5-2x^4y) \alpha_1 \beta_2 \alpha_3  +2(-x^5-2x^4y) \beta_1 \beta_2 \beta_3 + (x^5-x^4y)\beta_1 \alpha_2\alpha_3\\
  +&2(x^5-x^4y)\beta_1 \alpha_2\beta_3 +(-x^5-2x^4y) \beta_1 \beta_2 \alpha_3 
\end{align*}
\begin{align*}
  &z(6,\{3\})z(1,\{1,2\})\\
  =&2(x^5-x^4y)\alpha_1 \alpha_2 \alpha_3  + (-x^5-2x^4y) \alpha_1 \beta_2 \beta_3  + 2(-x^5-2x^4y) \alpha_1 \alpha_2\beta_3\\
  +& (x^5-x^4y)\alpha_1 \beta_2 \alpha_3 +2(-x^5-2x^4y) \beta_1 \beta_2 \beta_3  + (x^5-x^4y)\beta_1 \alpha_2\alpha_3\\
  +& (-x^5-2x^4y) \beta_1 \alpha_2\beta_3  + 2(x^5-x^4y)\beta_1 \beta_2 \alpha_3.
\end{align*}
We assume that 
\[
  a_1z(6,\{1\})z(1,\{2,3\}) -a_2 z(6,\{2\})z(1,\{1,3\}) + a_3z(6,\{3\})z(1,\{1,2\})=0 
\] 
for some $a_1,a_2,a_3\in \Q$.
By the coeffitient of $\alpha_1 \beta_2 \beta_3 ,\alpha_1 \alpha_2\beta_3$, there are equations
\begin{align*}
  2(x^5-x^4y)a_1 + (-x^5-2x^4y)a_2 + (-x^5-2x^4y)a_3&=0\\
  (x^5-x^4y)a_1 + (x^5-x^4y)a_2 + 2(-x^5-2x^4y)a_3&=0.
\end{align*}
Thus $a_1=a_2=a_3=0$ and we obtain that these are linearly independent.
Since the dimension of $(\mathcal{K}(3)^{D_6}/F_{2}\mathcal{K}(3)^{D_6})^{(10,3)}$ is $3$, this pair is a basis of this part.
\end{proof}
  
\begin{proof}[Proof of the case $(m,i,j)=(3,6,3)$]

There are equations
\begin{align*}
  z(2,\{1\})z(2,\{2\})z(2,\{3\})=& ((2x+y)\alpha_1 +  (x+2y) \beta_1)((2x+y)\alpha_2\\
  +&  (x+2y) \beta_2)((2x+y)\alpha_3 +  (x+2y) \beta_3)\\
  =& (2x+y)^3 \alpha_1 \alpha_2 \alpha_3 +\dots \\
  =& (3x^3 + 6x^2y) \alpha_1 \alpha_2 \alpha_3 +\dots,
\end{align*}
\begin{align*}
  z(4,\{1,2,3\})=&-x^3\alpha_1 \alpha_2 \beta_3 -x^3\alpha_1 \beta_2 \alpha_3 -x^3\beta_1 \alpha_2 \alpha_3 \\
  &-x^3\alpha_1 \beta_2 \beta_3 -x^3\beta_1 \alpha_2 \beta_3 -x^3\beta_1 \beta_2 \alpha_3. \\ 
\end{align*}
By comparing the coefficient of $\alpha_1 \alpha_2 \alpha_3$, we obtain that these are linearly independent.
Since the dimension of $(\mathcal{K}(3)^{D_6}/F_{2}\mathcal{K}(3)^{D_6})^{(6,3)}$ is $2$, this pair is a basis of this part.

\end{proof}

\begin{proof}[Proof of the case $(m,i,j)=(3,4,4)$]

There is an equation
\begin{align*}
  &z(2,\{1\})z(2,\{2\})z(1,\{2,3\})\\
  =& ((2x+y)\alpha_1 +  (x+2y) \beta_1)((2x+y)\alpha_2 +  (x+2y) \beta_2)(\alpha_2\alpha_3 +  \beta_2 \beta_3 + 2\alpha_2\beta_3 +2\beta_2 \alpha_3 )\\
  =& (2x+y)(2x+y-(x+2y)) \alpha_1 \alpha_2 \beta_2 \alpha_3 + \dots \\
  =& 3x^2 \alpha_1 \alpha_2 \beta_2 \alpha_3 + 0\alpha_1 \beta_1 \alpha_2 \alpha_3+0\alpha_1 \alpha_2 \alpha_3 \beta_3\dots 
\end{align*}
Similarly,
\begin{align*}
  &z(2,\{1\})z(2,\{2\})z(1,\{1,3\})\\
  =&0\alpha_1 \alpha_2 \beta_2 \alpha_3 - 3x^2 \alpha_1 \beta_1 \alpha_2 \alpha_3 + 0\alpha_1 \alpha_2 \alpha_3 \beta_3 + \dots 
\end{align*}
\begin{align*}
  &z(2,\{1\})z(2,\{3\})z(1,\{2,3\})\\
  =& 0\alpha_1 \alpha_2 \beta_2 \alpha_3+0 \alpha_1 \beta_1 \alpha_2 \alpha_3- 3x^2 \alpha_1 \alpha_2 \alpha_3 \beta_3 + \dots 
\end{align*}
By comparing the coefficient of $\alpha_1 \alpha_2 \beta_2 \alpha_3, \alpha_1 \beta_1 \alpha_2 \alpha_3,\alpha_1 \alpha_2 \alpha_3 \beta_3$, we obtain that these are linearly independent.
Since the dimension of $(\mathcal{K}(3)^{D_6}/F_{2}\mathcal{K}(3)^{D_6})^{(4,4)}$ is $3$, this pair is a basis of this part.
\end{proof}

\begin{proof}[Proof of the case $(m,i,j)=(4,4,4)$]
  
  There is an equation
  \begin{align*}
      &z(2,\{1\})z(2,\{2\})z(1,\{3,4\})\\
      =& ((2x+y)\alpha_1 +  (x+2y) \beta_1)((2x+y)\alpha_2\\
      +&  (x+2y) \beta_2)(2\alpha_3\alpha_4 +  2\beta_3 \beta_4 + \alpha_3 \beta_4 +\beta_3 \alpha_4 )\\
      =& 2(2x+y)(2x+y) \alpha_1 \alpha_2 \alpha_3 \alpha_4 + (2x+y)(2x+y) \alpha_1 \alpha_2 \beta_3 \beta_4\\
      +& 2(2x+y)(2x+y) \alpha_1 \alpha_2 \alpha_3 \beta_4 + (2x+y)(2x+y) \alpha_1 \alpha_2 \beta_3 \alpha_4\\
      +& 2(2x+y)(x+2y) \alpha_1 \beta_2 \alpha_3 \alpha_4 + 2(2x+y)(x+2y) \alpha_1 \beta_2 \beta_3 \beta_4 \\
      +& (2x+y)(x+2y) \alpha_1 \beta_2 \alpha_3 \beta_4 + (2x+y)(x+2y) \alpha_1 \beta_2 \beta_3 \alpha_4\\
      +& 2(2x+y)(x+2y) \beta_1 \alpha_2 \alpha_3 \alpha_4 + 2(2x+y)(x+2y) \beta_1 \alpha_2 \beta_3 \beta_4 \\
      +& (2x+y)(x+2y) \beta_1 \alpha_2 \alpha_3 \beta_4 + (2x+y)(x+2y) \beta_1 \alpha_2 \beta_3 \alpha_4\\
      +& 2(x+2y)(x+2y) \beta_1 \beta_2 \alpha_3 \alpha_4+ 2(x+2y)(x+2y) \beta_1 \beta_2 \beta_3 \beta_4\\
      +& (x+2y)(x+2y) \beta_1 \beta_2 \alpha_3 \beta_4 + (x+2y)(x+2y) \beta_1 \beta_2 \beta_3 \alpha_4\\
      =& (6x^2+6xy) \alpha_1 \alpha_2 \alpha_3 \alpha_4 + (6x^2+6xy) \alpha_1 \alpha_2 \beta_3 \beta_4 + (3x^2+3xy) \alpha_1 \alpha_2 \alpha_3 \beta_4\\
      +& (3x^2+3xy) \alpha_1 \alpha_2 \beta_3 \alpha_4+ 6xy \alpha_1 \beta_2 \alpha_3 \alpha_4 + 6xy \alpha_1 \beta_2 \beta_3 \beta_4 + 3xy \alpha_1 \beta_2 \alpha_3 \beta_4\\
      +& 3xy \alpha_1 \beta_2 \beta_3 \alpha_4+ 6xy \beta_1 \alpha_2 \alpha_3 \alpha_4 + 6xy \beta_1 \alpha_2 \beta_3 \beta_4 + 3xy \beta_1 \alpha_2 \alpha_3 \beta_4\\
      +& 3xy \beta_1 \alpha_2 \beta_3 \alpha_4- 6x^2 \beta_1 \beta_2 \alpha_3 \alpha_4 - 6x^2 \beta_1 \beta_2 \beta_3 \beta_4 -3x^2 \beta_1 \beta_2 \alpha_3 \beta_4\\
      -& 3x^2 \beta_1 \beta_2 \beta_3 \alpha_4.\\
  \end{align*}
  Similarly
  \begin{align*}
      &-z(2,\{1\})z(2,\{3\})z(1,\{2,4\})\\
      =& (6x^2+6xy) \alpha_1 \alpha_2 \alpha_3 \alpha_4 + 3xy \alpha_1 \alpha_2 \beta_3 \beta_4 + (3x^2+3xy) \alpha_1 \alpha_2 \alpha_3 \beta_4\\
      +& 6xy \alpha_1 \alpha_2 \beta_3 \alpha_4+ (3x^2+3xy) \alpha_1 \beta_2 \alpha_3 \alpha_4 + 6xy \alpha_1 \beta_2 \beta_3 \beta_4\\
      +& (6x^2+6xy) \alpha_1 \beta_2 \alpha_3 \beta_4 + 3xy \alpha_1 \beta_2 \beta_3 \alpha_4+ 6xy \beta_1 \alpha_2 \alpha_3 \alpha_4 - 3x^2 \beta_1 \alpha_2 \beta_3 \beta_4\\
      +& 3xy \beta_1 \alpha_2 \alpha_3 \beta_4 -6x^2 \beta_1 \alpha_2 \beta_3 \alpha_4+ 3xy \beta_1 \beta_2 \alpha_3 \alpha_4 - 6x^2 \beta_1 \beta_2 \beta_3 \beta_4\\
      +& 6xy \beta_1 \beta_2 \alpha_3 \beta_4 -3x^2 \beta_1 \beta_2 \beta_3 \alpha_4,\\
  \end{align*}
  \begin{align*}
      &z(2,\{1\})z(2,\{4\})z(1,\{2,3\})\\
      =& (6x^2+6xy) \alpha_1 \alpha_2 \alpha_3 \alpha_4 + 3xy \alpha_1 \alpha_2 \beta_3 \beta_4 + 6xy \alpha_1 \alpha_2 \alpha_3 \beta_4\\
      +& (3x^2+3xy) \alpha_1 \alpha_2 \beta_3 \alpha_4+ (3x^2+3xy) \alpha_1 \beta_2 \alpha_3 \alpha_4 + 6xy \alpha_1 \beta_2 \beta_3 \beta_4\\
      +& 3xy \alpha_1 \beta_2 \alpha_3 \beta_4 + (6x^2+6xy) \alpha_1 \beta_2 \beta_3 \alpha_4+ 6xy \beta_1 \alpha_2 \alpha_3 \alpha_4- 3x^2 \beta_1 \alpha_2 \beta_3 \beta_4\\
      -& 6x^2 \beta_1 \alpha_2 \alpha_3 \beta_4 + 3xy \beta_1 \alpha_2 \beta_3 \alpha_4+ 3xy \beta_1 \beta_2 \alpha_3 \alpha_4 - 6x^2 \beta_1 \beta_2 \beta_3 \beta_4\\
      -&3x^2 \beta_1 \beta_2 \alpha_3 \beta_4 +6xy \beta_1 \beta_2 \beta_3 \alpha_4,\\
  \end{align*}
  \begin{align*}
      &z(2,\{2\})z(2,\{3\})z(1,\{1,4\})\\
      =& (6x^2+6xy) \alpha_1 \alpha_2 \alpha_3 \alpha_4 + 3xy \alpha_1 \alpha_2 \beta_3 \beta_4 + (3x^2+3xy) \alpha_1 \alpha_2 \alpha_3 \beta_4\\
      +& 6xy \alpha_1 \alpha_2 \beta_3 \alpha_4+ 6xy \alpha_1 \beta_2 \alpha_3 \alpha_4 - 3x^2 \alpha_1 \beta_2 \beta_3 \beta_4 + 3xy \alpha_1 \beta_2 \alpha_3 \beta_4\\
      -&6x^2 \alpha_1 \beta_2 \beta_3 \alpha_4+ (3x^2+3xy) \beta_1 \alpha_2 \alpha_3 \alpha_4 + 6xy \beta_1 \alpha_2 \beta_3 \beta_4\\
      +& (6x^2+6xy) \beta_1 \alpha_2 \alpha_3 \beta_4+ 3xy \beta_1 \alpha_2 \beta_3 \alpha_4+ 3xy \beta_1 \beta_2 \alpha_3 \alpha_4 - 6x^2 \beta_1 \beta_2 \beta_3 \beta_4\\
      +&6xy \beta_1 \beta_2 \alpha_3 \beta_4 -3x^2 \beta_1 \beta_2 \beta_3 \alpha_4.\\
  \end{align*}

  We consider the equation 
  \begin{align*}
    &a_1z(2,\{1\})z(2,\{2\})z(1,\{3,4\})  -a_2 z(2,\{1\})z(2,\{3\})z(1,\{2,4\})\\
    +& a_3z(2,\{1\})z(2,\{4\})z(1,\{2,3\})+a_4 z(2,\{2\})z(2,\{3\})z(1,\{1,4\})\\
    -&3a_5 z(3,\{1,2,3,4\})=0.
  \end{align*}
  By considering the coefficient of 
  \[
    \alpha_1 \alpha_2 \beta_3 \beta_4,\alpha_1 \beta_2 \alpha_3 \beta_4,\beta_1 \beta_2 \beta_3 \alpha_4,\alpha_1 \beta_2 \beta_3 \alpha_4,
  \]
  we obtain the equation
  \begin{align*}
    (2x^2+2xy)a_1+xya_2+ xya_3 +xya_4 +xy a_5 &=0\\
    xya_1+(2x^2+2xy)a_2+ xya_3 +xya_4 +xy a_5 &=0\\
    -x^2a_1-x^2a_2+ 2xya_3 -x^2a_4 +xy a_5 &=0\\
    xya_1+xya_2+ (2x^2+2xy)a_3 -2x^2a_5 +xy a_5 &=0.\\
  \end{align*}
  Thus $a_1=a_2=a_3=a_4=a_5=0$, and we obtain that these are linearly independent.
  Since the dimension of $(\mathcal{K}(4)^{D_6}/F_{3}\mathcal{K}(4)^{D_6})^{(4,4)}$ is $5$, this pair is a basis of this part.
\end{proof}

\begin{proof}[Proof of the case $(m,i,j)=(3,2,5)$]

  There is an equation
  \begin{align*}
      &z(2,\{1\})z(1,\{2,3\})^2 \\
      =&((2x+y)\alpha_1 +  (x+2y) \beta_1)(2\alpha_2\alpha_3 +  2\beta_2 \beta_3 + \alpha_2\beta_3 +\beta_2 \alpha_3 )^2\\
      =&((2x+y)\alpha_1 +  (x+2y) \beta_1)(-6\alpha_2\alpha_3\beta_2\beta_3)\\
      =&-6(2x+y)\alpha_1 \alpha_2\alpha_3\beta_2\beta_3-6  (x+2y) \beta_1\alpha_2\alpha_3\beta_2\beta_3.
  \end{align*}
  Similarly,
  \begin{align*}
    &z(2,\{2\})z(1,\{1,3\})^2 \\
    =&-6(2x+y)\alpha_2 \alpha_1\alpha_3\beta_1\beta_3-6(x+2y) \beta_2\alpha_1\alpha_3\beta_1\beta_3,\\
    &z(2,\{3\})z(1,\{1,2\})^2 \\
    =&-6(2x+y)\alpha_3 \alpha_1\alpha_2\beta_1\beta_2-6(x+2y) \beta_3\alpha_1\alpha_2\beta_1\beta_2.
  \end{align*}
  By looking at the coefficient of $\alpha_1 \alpha_2\alpha_3\beta_2\beta_3,\alpha_2 \alpha_1\alpha_3\beta_1\beta_3,\alpha_3 \alpha_1\alpha_2\beta_1\beta_2$, we obtain that these are linearly independent.
  Since the dimension of $(\mathcal{K}(3)^{D_6}/F_{2}\mathcal{K}(3)^{D_6})^{(2,5)}$ is $3$, this pair is a basis of this part.
\end{proof}

\begin{proof}[Proof of the case $(m,i,j)=(4,2,5)$]

  There is an equation
\begin{align*}
  &z(2,\{1\})z(1,\{1,2\})z(1,\{3,4\}) \\
  =&((2x+y)\alpha_1 +  (x+2y) \beta_1)(2\alpha_1\alpha_2 + 2\beta_1 \beta_2 +\alpha_1 \beta_2 +\beta_1 \alpha_2)\\
  \times & (2\alpha_3\alpha_4 +2\beta_3 \beta_4 + \alpha_3 \beta_4 +\beta_3 \alpha_4)\\
  =&-3x\alpha_1\beta_1 \alpha_2 \alpha_3 \alpha_4 - 3x \alpha_1 \beta_1 \alpha_2 \alpha_3 \beta_4 - 3x \alpha_1 \beta_1 \alpha_2 \beta_3 \alpha_4 - 3x \alpha_1 \beta_1 \alpha_2 \beta_3 \beta_4 \\
  +&3y \alpha_1 \beta_1 \beta_2\beta_3 \beta_4 +3y \alpha_1 \beta_1 \beta_2\beta_3 \alpha_4 +3y \alpha_1 \beta_1 \beta_2\alpha_3 \beta_4 +3y \alpha_1 \beta_1 \beta_2\alpha_3 \alpha_4.
\end{align*}
Similarly,
\begin{align*}
  &-z(2,\{1\})z(1,\{1,3\})z(1,\{2,4\}) \\
  =&-3x\alpha_1\beta_1 \alpha_2 \alpha_3 \alpha_4 - 3x \alpha_1 \beta_1 \alpha_2 \alpha_3 \beta_4 + 3y \alpha_1 \beta_1 \alpha_2 \beta_3 \alpha_4 + 3y \alpha_1 \beta_1 \alpha_2 \beta_3 \beta_4 \\
  +&3y \alpha_1 \beta_1 \beta_2\beta_3 \beta_4 +3y \alpha_1 \beta_1 \beta_2\beta_3 \alpha_4 -3x \alpha_1 \beta_1 \beta_2\alpha_3 \beta_4 -3x \alpha_1 \beta_1 \beta_2\alpha_3 \alpha_4,
\end{align*}
\begin{align*}
  &z(2,\{1\})z(1,\{1,4\})z(1,\{2,3\}) \\
  =&-3x\alpha_1\beta_1 \alpha_2 \alpha_3 \alpha_4 + 3y \alpha_1 \beta_1 \alpha_2 \alpha_3 \beta_4 - 3x \alpha_1 \beta_1 \alpha_2 \beta_3 \alpha_4 + 3y \alpha_1 \beta_1 \alpha_2 \beta_3 \beta_4 \\
  +&3y \alpha_1 \beta_1 \beta_2\beta_3 \beta_4 -3x \alpha_1 \beta_1 \beta_2\beta_3 \alpha_4 +3y \alpha_1 \beta_1 \beta_2\alpha_3 \beta_4 -3x \alpha_1 \beta_1 \beta_2\alpha_3 \alpha_4.
\end{align*}
By looking at the coefficient of $\alpha_1 \beta_1 \alpha_2 \alpha_3 \beta_4,\alpha_1 \beta_1 \alpha_2 \beta_3 \alpha_4,\alpha_1 \beta_1 \alpha_2 \beta_3 \beta_4$, we obtain that these are linearly independent.
When $z(2,\{i\})z(1,\{1,2\})z(1,\{3,4\})$, $z(2,\{i\})z(1,\{1,3\})z(1,\{2,4\})$ and $z(2,\{i\})z(1,\{1,4\})z(1,\{2,3\})$ are represented by linear combination of basis in Lemma \ref{basis_K}, the basis with non-zero coefficient are multiple of $\alpha_i\beta_i$ and not multiple of $\alpha_j\beta_j$ for $j\ne i$.
Thus the following are linearly independent 
\begin{align*}
  z(2,\{1\})z(1,\{1,2\})z(1,\{3,4\}),z(2,\{1\})z(1,\{1,3\})z(1,\{2,4\}),\\
  z(2,\{1\})z(1,\{1,4\})z(1,\{2,3\}),z(2,\{2\})z(1,\{1,2\})z(1,\{3,4\}),\\
  z(2,\{2\})z(1,\{1,3\})z(1,\{2,4\}),z(2,\{2\})z(1,\{1,4\})z(1,\{2,3\}),\\
  z(2,\{3\})z(1,\{1,2\})z(1,\{3,4\}),z(2,\{3\})z(1,\{1,3\})z(1,\{2,4\}),\\
  z(2,\{3\})z(1,\{1,4\})z(1,\{2,3\}),z(2,\{4\})z(1,\{1,2\})z(1,\{3,4\}),\\
  z(2,\{4\})z(1,\{1,3\})z(1,\{2,4\}),z(2,\{4\})z(1,\{1,4\})z(1,\{2,3\}).
\end{align*}
Since dimension of $(\mathcal{K}(4)^{D_6}/F_{3}\mathcal{K}(4)^{D_6})^{(2,5)}$ is $12$, these are a basis of this part.
\end{proof}

\begin{proof}[Proof of the case $(m,i,j)=(5,2,5)$]

  There is an equation
  \begin{align*}
      &z(2,\{1\})z(1,\{2,3\})z(1,\{4,5\}) \\
      =&((2x+y)\alpha_1 +  (x+2y) \beta_1)(2\alpha_2\alpha_3 + 2 \beta_2 \beta_3 + \alpha_2\beta_3 +\beta_2 \alpha_3)\\
      \times &(2\alpha_4\alpha_5 +  2\beta_4 \beta_5 + \alpha_4 \beta_5 +\beta_4 \alpha_5)\\
      =&4(2x+y)\alpha_1 \alpha_2\alpha_3 \alpha_4 \alpha_5 + 2(2x+y)\alpha_1 \beta_2\alpha_3 \alpha_4 \alpha_5 +2(2x+y)\alpha_1 \alpha_2\beta_3 \alpha_4 \alpha_5\\
      +&4(2x+y)\alpha_1 \beta_2 \beta_3 \alpha_4 \alpha_5+2(2x+y)\alpha_1 \alpha_2\alpha_3 \beta_4 \alpha_5 + (2x+y)\alpha_1 \beta_2\alpha_3 \beta_4 \alpha_5\\
      +&(2x+y)\alpha_1 \alpha_2\beta_3 \beta_4 \alpha_5 +2(2x+y)\alpha_1 \beta_2 \beta_3 \beta_4 \alpha_5+2(2x+y)\alpha_1 \alpha_2\alpha_3 \alpha_4 \beta_5\\
      +& (2x+y)\alpha_1 \beta_2\alpha_3 \alpha_4 \beta_5 +(2x+y)\alpha_1 \alpha_2\beta_3 \alpha_4 \beta_5 +2(2x+y)\alpha_1 \beta_2 \beta_3 \alpha_4 \beta_5\\
      +&4(2x+y)\alpha_1 \alpha_2\alpha_3 \beta_4 \beta_5 + 2(2x+y)\alpha_1 \beta_2\alpha_3 \beta_4 \beta_5 +2(2x+y)\alpha_1 \alpha_2\beta_3 \beta_4 \beta_5\\
      +&4(2x+y)\alpha_1 \beta_2 \beta_3 \beta_4 \beta_5+4(x+2y)\beta_1 \alpha_2\alpha_3 \alpha_4 \alpha_5 + 2(x+2y)\beta_1 \beta_2\alpha_3 \alpha_4 \alpha_5\\
      +&2(x+2y)\beta_1 \alpha_2\beta_3 \alpha_4 \alpha_5 +4(x+2y)\beta_1 \beta_2 \beta_3 \alpha_4 \alpha_5+2(x+2y)\beta_1 \alpha_2\alpha_3 \beta_4 \alpha_5\\
      +&(x+2y)\beta_1 \beta_2\alpha_3 \beta_4 \alpha_5 +(x+2y)\beta_1 \alpha_2\beta_3 \beta_4 \alpha_5 +2(x+2y)\beta_1 \beta_2 \beta_3 \beta_4 \alpha_5\\ 
      +&2(x+2y)\beta_1 \alpha_2\alpha_3 \alpha_4 \beta_5 + (x+2y)\beta_1 \beta_2\alpha_3 \alpha_4 \beta_5 +(x+2y)\beta_1 \alpha_2\beta_3 \alpha_4 \beta_5\\
      +&2(x+2y)\beta_1 \beta_2 \beta_3 \alpha_4 \beta_5+4(x+2y)\beta_1 \alpha_2\alpha_3 \beta_4 \beta_5 + 2(x+2y)\beta_1 \beta_2\alpha_3 \beta_4 \beta_5\\
      +&2(x+2y)\beta_1 \alpha_2\beta_3 \beta_4 \beta_5 +4(x+2y)\beta_1 \beta_2 \beta_3 \beta_4 \beta_5,\\
  \end{align*}
  and similar elements can be computed similarly.
  We consider the equation
  \begin{align*}
      &a_1z(1,\{1\})z(0,\{2,3\})z(0,\{4,5\})-a_2z(1,\{1\})z(0,\{2,4\})z(0,\{3,5\})\\
      +&a_3z(1,\{1\})z(0,\{2,5\})z(0,\{3,4\})-a_4z(1,\{2\})z(0,\{1,3\})z(0,\{4,5\})\\
      +&a_5 z(1,\{2\})z(0,\{1,4\})z(0,\{3,5\})-a_6 z(1,\{2\})z(0,\{1,5\})z(0,\{3,4\})\\
      +&a_7 z(1,\{3\})z(0,\{1,4\})z(0,\{2,5\})+a_8 z(1,\{3\})z(0,\{1,5\})z(0,\{2,4\})\\
      +&a_9 z(1,\{3\})z(0,\{1,2\})z(0,\{4,5\})-a_{10}z(1,\{4\})z(0,\{1,2\})z(0,\{3,5\})\\
      -&a_{11}z(1,\{4\})z(0,\{1,5\})z(0,\{2,3\})+a_{12}z(1,\{4\})z(0,\{1,3\})z(0,\{2,5\})\\
      +&a_{13}z(1,\{5\})z(0,\{1,2\})z(0,\{3,4\})-a_{14}z(1,\{5\})z(0,\{1,3\})z(0,\{2,4\})\\
      +&a_{15}z(1,\{5\})z(0,\{1,4\})z(0,\{2,3\})+a_{16}z(1,\{1,2,3,4,5\})=0.
  \end{align*}
    By the coefficient of 
      \[
        \beta_1 \alpha_2\alpha_3 \alpha_4 \alpha_5, \alpha_1 \alpha_2\alpha_3 \alpha_4 \alpha_5
      \]
      we obtain the following equations
      \begin{align*}
        &2(2x+y)(a_4+a_5+a_6+a_7+a_8+a_9+a_{10}+a_{11}+a_{12}+a_{13}+a_{14}+a_{15})\\
        +&4(x+2y)(a_1+a_2+a_3)+(x+y)a_{16}=0\\
        &(2x+y)(a_1+a_2+a_3+a_4+a_5+a_6+a_7+a_8+a_9+a_{10}+a_{11}+a_{12}\\
        +&a_{13}+a_{14}+a_{15}+a_{16})=0,
      \end{align*}
      and we obtain
      \begin{align*}
        &4a_1+4a_2+4a_3+4a_4+4a_5+4a_6+4a_7+4a_8+4a_9+4a_{10}+4a_{11}\\
        +&4a_{12}+4a_{13}+4a_{14}+4a_{15}+a_{16}=0\\
        &a_1+a_2+a_3+a_4+a_5+a_6+a_7+a_8+a_9+a_{10}+a_{11}+a_{12}+a_{13}\\
        +&a_{14}+a_{15}+a_{16}=0.
      \end{align*}
      By these equations, we obtain that $a_{16}=0$, and $z(2,\{1,2,3,4,5\})$ is linearly independent to the others.

      And in the following equations
  \begin{align*}
      &a_1z(2,\{1\})z(1,\{2,3\})z(1,\{4,5\})-a_2z(2,\{1\})z(1,\{2,4\})z(1,\{3,5\})\\
      -&a_3z(2,\{2\})z(1,\{1,3\})z(1,\{4,5\})+a_4z(2,\{2\})z(1,\{1,4\})z(1,\{3,5\})\\
      +&a_5 z(2,\{3\})z(1,\{1,4\})z(1,\{2,5\})+a_6 z(2,\{3\})z(1,\{1,5\})z(1,\{2,4\})\\
      -&a_7z(2,\{4\})z(1,\{1,2\})z(1,\{3,5\})-a_8z(2,\{4\})z(1,\{1,5\})z(1,\{2,3\})\\
      +&a_9z(2,\{5\})z(1,\{1,2\})z(1,\{3,4\})-a_{10}z(2,\{5\})z(1,\{1,3\})z(1,\{2,4\})=0,
  \end{align*}
  the coefficient of 
  \begin{align*}
    &\beta_1 \alpha_2\alpha_3 \alpha_4 \alpha_5,\alpha_1 \beta_2\alpha_3 \alpha_4 \alpha_5,\alpha_1 \alpha_2\beta_3 \alpha_4 \alpha_5,\alpha_1 \alpha_2\alpha_3 \beta_4 \alpha_5,\alpha_1 \alpha_2\alpha_3 \alpha_4 \beta_5\\
    &\alpha_1 \beta_2\beta_3 \alpha_4 \alpha_5,\alpha_1 \beta_2\alpha_3 \beta_4 \alpha_5, \alpha_1 \beta_2\alpha_3 \alpha_4 \beta_5, \alpha_1 \alpha_2\beta_3 \beta_4 \alpha_5,\alpha_1 \alpha_2\beta_3 \alpha_4 \beta_5,\\
    &\alpha_1 \alpha_2\alpha_3 \beta_4 \beta_5,\beta_1 \beta_2\alpha_3 \alpha_4 \alpha_5,\beta_1 \alpha_2\beta_3 \alpha_4 \alpha_5,\beta_1 \alpha_2\alpha_3 \beta_4 \alpha_5,\beta_1 \alpha_2\alpha_3 \alpha_4 \beta_5,\\
  \end{align*}%
  satisfies the following equations
  \begin{align*}
      &4(x+2y)(a_1+a_2)+2(2x+y)(a_3+a_4+a_5+a_6+a_7+a_8+a_9+a_{10})=0\\
      &4(x+2y)(a_3+a_4)+2(2x+y)(a_1+a_2+a_5+a_6+a_7+a_8+a_9+a_{10})=0\\
      &4(x+2y)(a_5+a_6)+2(2x+y)(a_1+a_2+a_3+a_4+a_7+a_8+a_9+a_{10})=0\\
      &4(x+2y)(a_7+a_8)+2(2x+y)(a_1+a_2+a_3+a_4+a_5+a_6+a_9+a_{10})=0\\
      &4(x+2y)(a_9+a_{10})+2(2x+y)(a_1+a_2+a_3+a_4+a_5+a_6+a_7+a_8)=0\\
      &2(x+2y)(a_3+a_4+a_5+a_6)+(2x+y)(4a_1+a_2+a_7+4a_8+a_9+a_{10})=0\\
      &2(x+2y)(a_3+a_4+a_9+a_{10})+(2x+y)(a_1+a_2+4a_5+a_6+a_7+a_8)=0\\
      &2(x+2y)(a_1+a_2+a_5+a_6)+(2x+y)(4a_3+a_4+a_7+a_8+a_9+4a_{10})=0\\
      &2(x+2y)(a_1+a_2+a_7+a_8)+(2x+y)(a_3+4a_4+4a_5+a_6+a_9+a_{10})=0\\
      &2(x+2y)(a_1+a_2+a_9+a_{10})+(2x+y)(a_3+a_4+a_5+4a_6+a_7+4a_8)=0.\\
  \end{align*}
  Then by comparing the coefficient of $x$ and $y$ for each equation, we can obtain the following equations
  \begin{align*}
      &a_1+a_2=0\\
      &a_3+a_4=0\\
      &a_5+a_6=0\\
      &a_7+a_8=0\\
      &a_9+a_{10}=0\\
      &4a_1+a_2+a_7+4a_8+a_9+a_{10}=0\\
      &a_1+a_2+4a_5+a_6+a_7+a_8=0\\
      &4a_3+a_4+a_7+a_8+a_9+4a_{10}=0\\
      &a_3+4a_4+4a_5+a_6+a_9+a_{10}=0\\
      &a_3+a_4+a_5+4a_6+a_7+4a_8=0,\\
  \end{align*}
  and these equation induces $a_1=a_2=a_3=a_4=a_5=a_6=a_7=a_8=a_9=a_{10}=0$.
  The dimension of $(\mathcal{K}(5)^{D_6}/F_{4}\mathcal{K}(5)^{D_6})^{(2,5)}$ is $11$.
  Thus the pair of these $10$ elements and $z(2,\{1,2,3,4,5\})$ is a basis of this part.
\end{proof}


\begin{proof}[Proof of the case $(m,i,j)=(4,0,6)$]

  There is an equation
  \begin{align*}
      &z(1,\{1,2\})^2z(1,\{3,4\})\\
      =&(2\alpha_1\alpha_2 + 2\beta_1 \beta_2 + \alpha_1 \beta_2 +\beta_1 \alpha_2 )^2(2\alpha_3\alpha_4 +  2\beta_3 \beta_4 +\alpha_3 \beta_4 +\beta_3 \alpha_4 )\\
      =&6\alpha_1\beta_1 \alpha_2\beta_2(2\alpha_3\alpha_4 +  2\beta_3 \beta_4 + \alpha_3 \beta_4 +\beta_3 \alpha_4 )\ne 0.
  \end{align*}
  When $z(1,\{i_1,i_2\})^2z(1,\{i_3,i_4\})$ for $\{i_1,i_2,i_3,i_4\}=\{1,2,3,4\}$ is represented by linear combination of basis in Lemma \ref{basis_K}, the basis with non-zero coefficient are multiple of $\alpha_{i_1}\beta_{i_1}\alpha_{i_2}\beta_{i_2}$.
  Thus
  \begin{align*}
    &z(0,\{1,2\})^2z(0,\{3,4\}),z(0,\{1,3\})^2z(0,\{2,4\}),z(0,\{1,4\})^2z(0,\{2,3\}),\\
    &z(0,\{2,3\})^2z(0,\{1,4\}),z(0,\{2,4\})^2z(0,\{1,3\}),z(0,\{3,4\})^2z(0,\{1,2\})
  \end{align*}
  are linearly independent.
  Since the dimension of $(\mathcal{K}(4)^{D_6}/F_{3}\mathcal{K}(4)^{D_6})^{(0,6)}$ is $6$, this pair is a basis of this part.
\end{proof}

\begin{proof}[Proof of the case $(m,i,j)=(5,0,6)$]

There are equations
  \begin{align*}
      &z(1,\{1,2\})z(1,\{1,3\})z(1,\{4,5\})\\
      =&3 \alpha_1\beta_1 (2\beta_2 \alpha_3 \alpha_4 \alpha_5  - 2 \alpha_2 \beta_3 \alpha_4 \alpha_5  +  \beta_2 \alpha_3 \beta_4 \alpha_5 +  \beta_2 \alpha_3 \alpha_4 \beta_5 -  \alpha_2 \beta_3 \beta_4 \alpha_5\\
      -& \alpha_2 \beta_3 \alpha_4 \beta_5+ 2 \beta_2 \alpha_3 \beta_4 \beta_5 -2  \alpha_2 \beta_3 \beta_4 \beta_5),\\
      &z(1,\{1,2\})z(1,\{1,4\})z(1,\{3,5\})\\
      =& 3\alpha_1\beta_1 (-2\beta_2 \alpha_3 \alpha_4 \alpha_5  + 2 \alpha_2 \alpha_3\beta_4 \alpha_5  - \beta_2 \beta_3 \alpha_4 \alpha_5 -  \beta_2 \alpha_3 \alpha_4 \beta_5 +  \alpha_2 \beta_3 \beta_4 \alpha_5\\
      +&\alpha_2 \alpha_3 \beta_4 \beta_5- 2 \beta_2 \beta_3 \alpha_4 \beta_5 + 2 \alpha_2 \beta_3 \beta_4 \beta_5),\\
      &z(1,\{1,2\})z(1,\{1,5\})z(1,\{3,4\})\\
      =&3 \alpha_1\beta_1 (2\beta_2 \alpha_3 \alpha_4 \alpha_5  - 2 \alpha_2 \alpha_3 \alpha_4 \beta_5 + \beta_2 \beta_3 \alpha_4\alpha_5 +\beta_2 \alpha_3 \beta_4\alpha_5-\alpha_2 \beta_3 \alpha_4 \beta_5\\
      -& \alpha_2 \alpha_3 \beta_4 \beta_5+2 \beta_2 \beta_3 \beta_4\alpha_5 -2\alpha_2 \beta_3 \beta_4\beta_5).
  \end{align*}
  By considering the coefficient of 
  \[
    \alpha_1\beta_1 \alpha_2 \beta_3 \alpha_4 \alpha_5, \alpha_1\beta_1 \alpha_2 \alpha_3 \beta_4 \alpha_5,\alpha_1\beta_1 \alpha_2 \alpha_3 \alpha_4 \beta_5,
  \]
  we obtain that 
  \begin{align*}
    &z(0,\{1,2\})z(0,\{1,3\})z(0,\{4,5\}),z(0,\{1,2\})z(0,\{1,4\})z(0,\{3,5\}),\\
    &z(0,\{1,2\})z(0,\{1,5\})z(0,\{3,4\})
  \end{align*}
  are linearly independent.
  When $z(1,\{i_1,i_2\})z(1,\{i_1,i_3\})z(1,\{i_4,i_5\})$ for $\{i_1,i_2,i_3,i_4,i_5\}=\{1,2,3,4,5\}$ is represented by linear combination of basis in Lemma \ref{basis_K}, the basis with non-zero coefficient are multiple of $\alpha_{i_1}\beta_{i_1}$, and not multiple of $\alpha_{i_j}\beta_{i_j}$ for $j\ne 1$.
  Thus the following are linearly independent
  \begin{align*}
      &z(0,\{1,2\})z(0,\{1,3\})z(0,\{4,5\}),z(0,\{1,2\})z(0,\{1,4\})z(0,\{3,5\}),\\
      &z(0,\{1,2\})z(0,\{1,5\})z(0,\{3,4\}),z(0,\{1,2\})z(0,\{2,3\})z(0,\{4,5\}),\\
      &z(0,\{1,2\})z(0,\{2,4\})z(0,\{3,5\}),z(0,\{1,2\})z(0,\{2,5\})z(0,\{3,4\}),\\
      &z(0,\{1,3\})z(0,\{2,3\})z(0,\{4,5\}),z(0,\{1,3\})z(0,\{3,4\})z(0,\{2,5\}),\\
      &z(0,\{1,3\})z(0,\{3,5\})z(0,\{2,4\}),z(0,\{1,4\})z(0,\{2,4\})z(0,\{3,5\}),\\
      &z(0,\{1,4\})z(0,\{3,4\})z(0,\{2,5\}),z(0,\{1,4\})z(0,\{4,5\})z(0,\{2,3\}),\\
      &z(0,\{1,5\})z(0,\{2,5\})z(0,\{3,4\}),z(0,\{1,5\})z(0,\{3,5\})z(0,\{2,4\}),\\
      &z(0,\{1,5\})z(0,\{4,5\})z(0,\{2,3\}).\\
  \end{align*}
  Since the dimension of $(\mathcal{K}(5)^{D_6}/F_{4}\mathcal{K}(5)^{D_6})^{(0,6)}$ is $15$, this pair is a basis of this part.
\end{proof}

\begin{proof}[Proof of the case $(m,i,j)=(6,0,6)$]

  There is an equation
  \begin{align*}
      &z(0,\{1,2\})z(0,\{3,4\})z(0,\{5,6\})\\
      =& 8\alpha_1\alpha_2 \alpha_3 \alpha_4 \alpha_5 \alpha_6 + 4\alpha_1\alpha_2 \alpha_3 \alpha_4 \alpha_5 \beta_6 + 4\alpha_1\alpha_2 \alpha_3 \alpha_4 \beta_5 \alpha_6 + 8 \alpha_1\alpha_2 \alpha_3 \alpha_4 \beta_5 \beta_6 \\
      +& 4\alpha_1\alpha_2 \alpha_3 \beta_4 \alpha_5 \alpha_6 +2\alpha_1\alpha_2 \alpha_3 \beta_4 \alpha_5 \beta_6 +2\alpha_1\alpha_2 \alpha_3 \beta_4 \beta_5 \alpha_6 + 4 \alpha_1\alpha_2 \alpha_3 \beta_4 \beta_5 \beta_6 \\
      +& 4\alpha_1\alpha_2 \beta_3 \alpha_4 \alpha_5 \alpha_6 +2\alpha_1\alpha_2 \beta_3 \alpha_4 \alpha_5 \beta_6 +2\alpha_1\alpha_2 \beta_3 \alpha_4 \beta_5 \alpha_6 + 4 \alpha_1\alpha_2 \beta_3 \alpha_4 \beta_5 \beta_6 \\
      +& 8\alpha_1\alpha_2 \beta_3 \beta_4 \alpha_5 \alpha_6 + 4\alpha_1\alpha_2 \beta_3 \beta_4 \alpha_5 \beta_6 + 4\alpha_1\alpha_2 \beta_3 \beta_4 \beta_5 \alpha_6 + 8 \alpha_1\alpha_2 \beta_3 \beta_4 \beta_5 \beta_6 \\
      +& 4\alpha_1\beta_2 \alpha_3 \alpha_4 \alpha_5 \alpha_6 +2\alpha_1\beta_2 \alpha_3 \alpha_4 \alpha_5 \beta_6 +2\alpha_1\beta_2 \alpha_3 \alpha_4 \beta_5 \alpha_6 + 4 \alpha_1\beta_2 \alpha_3 \alpha_4 \beta_5 \beta_6 \\
      +&2\alpha_1\beta_2 \alpha_3 \beta_4 \alpha_5 \alpha_6 + \alpha_1\beta_2 \alpha_3 \beta_4 \alpha_5 \beta_6 + \alpha_1\beta_2 \alpha_3 \beta_4 \beta_5 \alpha_6 +2 \alpha_1\beta_2 \alpha_3 \beta_4 \beta_5 \beta_6 \\
      +&2\alpha_1\beta_2 \beta_3 \alpha_4 \alpha_5 \alpha_6 + \alpha_1\beta_2 \beta_3 \alpha_4 \alpha_5 \beta_6 + \alpha_1\beta_2 \beta_3 \alpha_4 \beta_5 \alpha_6 +2 \alpha_1\beta_2 \beta_3 \alpha_4 \beta_5 \beta_6 \\
      +& 4\alpha_1\beta_2 \beta_3 \beta_4 \alpha_5 \alpha_6 +2\alpha_1\beta_2 \beta_3 \beta_4 \alpha_5 \beta_6 +2\alpha_1\beta_2 \beta_3 \beta_4 \beta_5 \alpha_6 + 4 \alpha_1\beta_2 \beta_3 \beta_4 \beta_5 \beta_6 \\
      +& 4\beta_1\alpha_2 \alpha_3 \alpha_4 \alpha_5 \alpha_6 +2\beta_1\alpha_2 \alpha_3 \alpha_4 \alpha_5 \beta_6 +2\beta_1\alpha_2 \alpha_3 \alpha_4 \beta_5 \alpha_6 + 4 \beta_1\alpha_2 \alpha_3 \alpha_4 \beta_5 \beta_6 \\
      +&2\beta_1\alpha_2 \alpha_3 \beta_4 \alpha_5 \alpha_6 + \beta_1\alpha_2 \alpha_3 \beta_4 \alpha_5 \beta_6 + \beta_1\alpha_2 \alpha_3 \beta_4 \beta_5 \alpha_6 +2 \beta_1\alpha_2 \alpha_3 \beta_4 \beta_5 \beta_6 \\
      +&2\beta_1\alpha_2 \beta_3 \alpha_4 \alpha_5 \alpha_6 + \beta_1\alpha_2 \beta_3 \alpha_4 \alpha_5 \beta_6 + \beta_1\alpha_2 \beta_3 \alpha_4 \beta_5 \alpha_6 +2 \beta_1\alpha_2 \beta_3 \alpha_4 \beta_5 \beta_6 \\
      +& 4\beta_1\alpha_2 \beta_3 \beta_4 \alpha_5 \alpha_6 +2\beta_1\alpha_2 \beta_3 \beta_4 \alpha_5 \beta_6 +2\beta_1\alpha_2 \beta_3 \beta_4 \beta_5 \alpha_6 + 4 \beta_1\alpha_2 \beta_3 \beta_4 \beta_5 \beta_6 \\
      +& 8\beta_1\beta_2 \alpha_3 \alpha_4 \alpha_5 \alpha_6 + 4\beta_1\beta_2 \alpha_3 \alpha_4 \alpha_5 \beta_6 + 4\beta_1\beta_2 \alpha_3 \alpha_4 \beta_5 \alpha_6 + 8 \beta_1\beta_2 \alpha_3 \alpha_4 \beta_5 \beta_6 \\
      +& 4\beta_1\beta_2 \alpha_3 \beta_4 \alpha_5 \alpha_6 +2\beta_1\beta_2 \alpha_3 \beta_4 \alpha_5 \beta_6 +2\beta_1\beta_2 \alpha_3 \beta_4 \beta_5 \alpha_6 + 4 \beta_1\beta_2 \alpha_3 \beta_4 \beta_5 \beta_6 \\
      +& 4\beta_1\beta_2 \beta_3 \alpha_4 \alpha_5 \alpha_6 +2\beta_1\beta_2 \beta_3 \alpha_4 \alpha_5 \beta_6 +2\beta_1\beta_2 \beta_3 \alpha_4 \beta_5 \alpha_6 + 4 \beta_1\beta_2 \beta_3 \alpha_4 \beta_5 \beta_6 \\
      +& 8\beta_1\beta_2 \beta_3 \beta_4 \alpha_5 \alpha_6 + 4\beta_1\beta_2 \beta_3 \beta_4 \alpha_5 \beta_6 + 4\beta_1\beta_2 \beta_3 \beta_4 \beta_5 \alpha_6 + 8 \beta_1\beta_2 \beta_3 \beta_4 \beta_5 \beta_6, \\
  \end{align*}
  and similar elements can be computed similarly.

The following is the matrix consisted by the coefficient of 
\begin{align*}
  &\alpha_1\alpha_2 \alpha_3 \alpha_4 \alpha_5 \alpha_6,\alpha_1\alpha_2 \alpha_3 \alpha_4 \beta_5 \beta_6,\alpha_1\alpha_2 \alpha_3 \beta_4 \alpha_5 \beta_6,\alpha_1\alpha_2 \beta_3 \alpha_4 \alpha_5 \beta_6\\
  &\alpha_1\beta_2 \alpha_3 \alpha_4 \alpha_5 \beta_6,\beta_1\alpha_2 \alpha_3 \alpha_4 \alpha_5 \beta_6,\alpha_1\alpha_2 \alpha_3 \beta_4 \beta_5 \alpha_6,\alpha_1\alpha_2 \beta_3 \alpha_4 \beta_5 \alpha_6\\
  &\alpha_1\beta_2 \alpha_3 \alpha_4 \beta_5 \alpha_6,\alpha_1\alpha_2 \beta_3 \beta_4 \alpha_5 \alpha_6,\alpha_1\beta_2 \alpha_3 \beta_4 \alpha_5 \alpha_6
\end{align*}
in the $11$ elements
\begin{align*}
  &z(1,\{1,2,3,4,5,6\}),z(1,\{1,2\})z(1,\{3,4\})z(1,\{5,6\}),\\
  &z(1,\{1,2\})z(1,\{3,5\})z(1,\{4,6\}),z(1,\{1,2\})z(1,\{3,6\})z(1,\{4,5\}),\\
  &z(1,\{1,3\})z(1,\{2,4\})z(1,\{5,6\}),z(1,\{1,3\})z(1,\{2,5\})z(1,\{4,6\}),\\
  &z(1,\{1,3\})z(1,\{2,6\})z(1,\{4,5\}),z(1,\{1,4\})z(1,\{2,3\})z(1,\{5,6\}),\\
  &z(1,\{1,4\})z(1,\{2,5\})z(1,\{3,6\}),z(1,\{1,5\})z(1,\{2,3\})z(1,\{4,6\}),\\
  &z(1,\{1,6\})z(1,\{2,3\})z(1,\{4,5\}),
\end{align*}
\[
  \left(\begin{array}{cccccccccccccccccc}
    2 & 1 & 1 & 1 & 1 & 1 & 1 & 1 & 1 & 1 & 1 \\ 
    8 & 8 & 2 & 2 & 2 & 2 & 2 & 2 & 2 & 8 & 2 \\ 
    8 & 2 & 8 & 2 & 2 & 2 & 2 & 8 & 2 & 2 & 2 \\ 
    8 & 2 & 2 & 8 & 2 & 2 & 8 & 2 & 2 & 2 & 2 \\ 
    8 & 8 & 2 & 2 & 2 & 2 & 2 & 2 & 2 & 2 & 8 \\ 
    8 & 2 & 8 & 2 & 2 & 2 & 2 & 2 & 8 & 2 & 2 \\ 
    8 & 2 & 2 & 2 & 8 & 2 & 8 & 2 & 2 & 2 & 2 \\ 
    8 & 8 & 2 & 2 & 2 & 2 & 2 & 2 & 2 & 2 & 2 \\ 
    8 & 2 & 2 & 8 & 2 & 2 & 2 & 2 & 8 & 2 & 2 \\ 
    8 & 2 & 8 & 2 & 2 & 2 & 2 & 2 & 2 & 2 & 2 \\ 
    8 & 2 & 2 & 2 & 2 & 8 & 8 & 2 & 2 & 2 & 2 \\ 
  \end{array}\right).
\]
The rank of this matrix is $11$, and this means that these elements are linearly independent.
Since the dimension of $(\mathcal{K}(6)^{D_6}/F_{5}\mathcal{K}(6)^{D_6})^{(0,6)}$ is $11$, this pair is a basis of this part.

\end{proof}

\begin{proof}[Proof of the case $(m,i,j)=(3,8,4)$]

  There is an equation
  \begin{align*}
      &z(5,\{1,2\})z(1,\{1,3\})\\
      =&(-2x^4-x^3y)\alpha_1\beta_1\alpha_2\alpha_3+(-x^4+x^3y)\alpha_1\beta_1\alpha_2\beta_3\\
      +&(x^4+2x^3y)\alpha_1\beta_1\beta_2\alpha_3+(x^4+2x^3y)\alpha_1\beta_1\beta_2\beta_3
  \end{align*}
  When for $\{i_1,i_2,i_3\}=\{1,2,3\}$ $z(5,\{i_1,i_2\})z(1,\{i_1,i_3\})$ is represented by linear combination of basis in Lemma \ref{basis_K}, the basis with non-zero coefficient are multiple of $\alpha_{i_1}\beta_{i_1}$ and not multiple of $\alpha_{i_2}\beta_{i_2}$ or $\alpha_{i_3}\beta_{i_3}$.
  Thus $z(5,\{1,2\})z(1,\{1,3\}), z(5,\{1,2\})z(1,\{2,3\}),z(5,\{1,3\})z(1,\{2,3\})$ are linearly independent.
  Since the dimension of $(\mathcal{K}(3)^{D_6}/F_{2}\mathcal{K}(3)^{D_6})^{(8,4)}$ is $3$, these are a basis of this part.
  
\end{proof}

\begin{proof}[Proof of the case $(m,i,j)=(4,8,4)$]

  There is an equation
  \begin{align*}
      &z(5,\{1,2\})z(1,\{3,4\})\\
      =& 2x^3y \alpha_1\alpha_2 \alpha_3\alpha_4 + x^3y \alpha_1\alpha_2 \alpha_3\beta_4 + x^3y\alpha_1\alpha_2 \beta_3 \alpha_4 + 2x^3y \alpha_1\alpha_2 \beta_3\beta_4\\
      -&2x^4 \beta_1\beta_2 \alpha_3\alpha_4 - x^4 \beta_1\beta_2 \alpha_3\beta_4 -x^4\beta_1\beta_2 \beta_3 \alpha_4 -2x^4 \beta_1\beta_2 \beta_3\beta_4\\
      -& 2(x^4+ x^3y) \alpha_1\beta_2 \alpha_3\alpha_4 - (x^4+ x^3y) \alpha_1\beta_2 \alpha_3\beta_4 - (x^4+ x^3y)\alpha_1\beta_2 \beta_3 \alpha_4\\
      -& 2(x^4+ x^3y) \alpha_1\beta_2 \beta_3\beta_4- 2(x^4+ x^3y) \beta_1\alpha_2 \alpha_3\alpha_4 - (x^4+ x^3y) \beta_1 \alpha_2 \alpha_3\beta_4\\
      -& (x^4+ x^3y) \beta_1\alpha_2 \beta_3 \alpha_4 - 2(x^4+ x^3y) \beta_1\alpha_2 \beta_3\beta_4.
  \end{align*}
  $z(4,\{1,3\})z(0,\{2,4\})$, $z(4,\{1,4\})z(0,\{2,3\})$, $z(4,\{2,3\})z(0,\{1,4\})$, $z(4,\{2,4\})z(0,\{1,3\})$ can be computed similarly.
  We consider the following equations
  \begin{align*}
    &a_1 z(4,\{1,2\})z(0,\{3,4\})- a_1 z(4,\{1,3\})z(0,\{2,4\})+a_3 z(4,\{1,4\})z(0,\{2,3\})\\
    +& a_4 z(4,\{2,3\})z(0,\{1,4\})-a_5 z(4,\{2,4\})z(0,\{1,3\})=0.
  \end{align*}
  By the coefficient of the $\alpha_1\alpha_2 \alpha_3\beta_4,\alpha_1\alpha_2 \beta_3\alpha_4,\alpha_1\beta_2 \alpha_3\alpha_4,\alpha_1\beta_2 \alpha_3\beta_4$, we obtain the following equations
  \begin{align*}
    &a_1x^3y+ a_2x^3y - 2a_3(x^4+ x^3y) + a_4x^3y -2a_5(x^4+ x^3y)=0\\
    &a_1x^3y-2 a_2(x^4+ x^3y) + a_3x^3y - 2a_4(x^4+ x^3y) + a_5x^3y =0\\
    -&2a_1(x^4+ x^3y)+a_2x^3y +a_3x^3y - 2a_4(x^4+ x^3y) - 2a_5(x^4+ x^3y) =0\\
    -&a_1(x^4+ x^3y)+ 2a_2x^3y -a_3(x^4+ x^3y) -a_4(x^4+ x^3y) - 2a_5x^4 =0
  \end{align*}
  Then 
  \begin{align*}
    &-2a_3-2a_5=0\\
    &-2a_2-2a_4=0\\
    &-2a_1-2a_3-2a_5=0\\
    &-a_1+a_2-2a_3-a_4=0\\
    &-a_1-a_3-a_4-2a_5=0,\\
  \end{align*}
  and we obtain $a_1=a_2=a_3=a_4=a_5=0$.
  Since the dimension of $(\mathcal{K}(4)^{D_6}/F_{3}\mathcal{K}(4)^{D_6})^{(8,4)}$ is $5$, this pair is a basis of this part.

\end{proof}

By combining these proof we have proved Lemma \ref{other_cases_generator}.
\begin{proof}
  [Proof of Theorem \ref{G_2_generator}]
  By Lemma \ref{basis_to_check} and \ref{other_cases_generator}, we can obtain this theorem.
\end{proof}

\section{Computation of Hilbert-Poincar\'e series of $\Hom(\Z^2,G)_0$}\label{sec:computation_of_Poincare}

We calculate the Hilbert-Poincar\'e series of $\Hom(\Z^3,G)_1$ for $G=F_4,$ by Theorem \ref{bigraded_Poincare_formula} with an assistance of computer. In this section, we only record the result.

\begin{align*}
  &P(\Hom(\Z^3,F_4)_0;s,t)={{s}^{48}} {{t}^{12}}+3 {{s}^{46}} {{t}^{11}}+3 {{s}^{38}} {{t}^{11}}+3 {{s}^{34}} {{t}^{11}}+3 {{s}^{26}} {{t}^{11}}\\
  +&3 {{s}^{48}} {{t}^{10}}+3 {{s}^{44}} {{t}^{10}}+3 {{s}^{40}} {{t}^{10}}+12 {{s}^{36}} {{t}^{10}}+9 {{s}^{32}} {{t}^{10}}+6 {{s}^{28}} {{t}^{10}}+15 {{s}^{24}} {{t}^{10}}\\
  +&3 {{s}^{20}} {{t}^{10}}+6 {{s}^{16}} {{t}^{10}}+6 {{s}^{12}} {{t}^{10}}+9 {{s}^{46}} {{t}^{9}}+2 {{s}^{42}} {{t}^{9}}+19 {{s}^{38}} {{t}^{9}}+27 {{s}^{34}} {{t}^{9}}\\
  +&20{{s}^{30}} {{t}^{9}}+35 {{s}^{26}} {{t}^{9}}+36 {{s}^{22}} {{t}^{9}}+18 {{s}^{18}} {{t}^{9}}+26 {{s}^{14}} {{t}^{9}}+18 {{s}^{10}} {{t}^{9}}+10 {{s}^{2}} {{t}^{9}}\\
  +&6 {{s}^{48}} {{t}^{8}}+9 {{s}^{44}} {{t}^{8}}+12 {{s}^{40}} {{t}^{8}}+48 {{s}^{36}} {{t}^{8}}+51 {{s}^{32}} {{t}^{8}}+51 {{s}^{28}} {{t}^{8}}+96 {{s}^{24}} {{t}^{8}}\\
  +&57 {{s}^{20}} {{t}^{8}}+57 {{s}^{16}} {{t}^{8}}+60 {{s}^{12}} {{t}^{8}}+18 {{s}^{8}} {{t}^{8}}+15 {{s}^{4}} {{t}^{8}}+15 {{t}^{8}}+18 {{s}^{46}} {{t}^{7}}\\
  +&6 {{s}^{42}} {{t}^{7}}+51 {{s}^{38}} {{t}^{7}}+84 {{s}^{34}} {{t}^{7}}+78 {{s}^{30}} {{t}^{7}}+123 {{s}^{26}} {{t}^{7}}+135 {{s}^{22}} {{t}^{7}}+84 {{s}^{18}} {{t}^{7}}\\
  +&102 {{s}^{14}} {{t}^{7}}+69 {{s}^{10}} {{t}^{7}}+12 {{s}^{6}} {{t}^{7}}+30 {{s}^{2}} {{t}^{7}}+10 {{s}^{48}} {{t}^{6}}+18 {{s}^{44}} {{t}^{6}}+27 {{s}^{40}} {{t}^{6}}\\
  +&100 {{s}^{36}} {{t}^{6}}+108 {{s}^{32}} {{t}^{6}}+108 {{s}^{28}} {{t}^{6}}+182 {{s}^{24}} {{t}^{6}}+108 {{s}^{20}} {{t}^{6}}+108{{s}^{16}} {{t}^{6}}\\
  +&100 {{s}^{12}} {{t}^{6}}+27 {{s}^{8}} {{t}^{6}}+18 {{s}^{4}} {{t}^{6}}+10 {{t}^{6}}+30 {{s}^{46}} {{t}^{5}}+12 {{s}^{42}} {{t}^{5}}+69 {{s}^{38}} {{t}^{5}}\\
  +&102 {{s}^{34}} {{t}^{5}}+84 {{s}^{30}} {{t}^{5}}+135 {{s}^{26}} {{t}^{5}}+123 {{s}^{22}} {{t}^{5}}+78 {{s}^{18}} {{t}^{5}}+84 {{s}^{14}} {{t}^{5}}\\
  +&51 {{s}^{10}} {{t}^{5}}+6 {{s}^{6}} {{t}^{5}}+18 {{s}^{2}} {{t}^{5}}+15 {{s}^{48}} {{t}^{4}}+15 {{s}^{44}} {{t}^{4}}+18 {{s}^{40}} {{t}^{4}}+60 {{s}^{36}} {{t}^{4}}\\
  +&57 {{s}^{32}} {{t}^{4}}+57 {{s}^{28}} {{t}^{4}}+96 {{s}^{24}} {{t}^{4}}+51 {{s}^{20}} {{t}^{4}}+51 {{s}^{16}} {{t}^{4}}+48 {{s}^{12}} {{t}^{4}}+12 {{s}^{8}} {{t}^{4}}\\
  +&9 {{s}^{4}} {{t}^{4}}+6 {{t}^{4}}+10 {{s}^{46}} {{t}^{3}}+18 {{s}^{38}} {{t}^{3}}+26 {{s}^{34}} {{t}^{3}}+18 {{s}^{30}} {{t}^{3}}+36 {{s}^{26}} {{t}^{3}}+35 {{s}^{22}} {{t}^{3}}\\
  +&\underline{20 {{s}^{18}} {{t}^{3}}}+27 {{s}^{14}} {{t}^{3}}+19 {{s}^{10}} {{t}^{3}}+2 {{s}^{6}} {{t}^{3}}+9 {{s}^{2}} {{t}^{3}}+6 {{s}^{36}}{{t}^{2}}+6 {{s}^{32}} {{t}^{2}}+3 {{s}^{28}} {{t}^{2}}\\
  +&15 {{s}^{24}} {{t}^{2}}+6 {{s}^{20}} {{t}^{2}}+9 {{s}^{16}} {{t}^{2}}+12 {{s}^{12}} {{t}^{2}}+3 {{s}^{8}} {{t}^{2}}+3 {{s}^{4}} {{t}^{2}}+3 {{t}^{2}}+3 {{s}^{22}} t\\
  +&3 {{s}^{14}} t+3 {{s}^{10}} t+3 {{s}^{2}} t+1
\end{align*}

\begin{align*}
  &P(\Hom(\Z^3,E_6)_0;s,t)={{s}^{72}} {{t}^{18}}+3 {{s}^{70}} {{t}^{17}}+3 {{s}^{64}} {{t}^{17}}+3 {{s}^{62}} {{t}^{17}}+3 {{s}^{58}} {{t}^{17}}+3 {{s}^{56}} {{t}^{17}}\\
  +&3 {{s}^{50}} {{t}^{17}}+3 {{s}^{72}} {{t}^{16}}+3 {{s}^{68}} {{t}^{16}}+3 {{s}^{66}} {{t}^{16}}+3 {{s}^{64}} {{t}^{16}}+9 {{s}^{62}} {{t}^{16}}+12 {{s}^{60}} {{t}^{16}}+3 {{s}^{58}} {{t}^{16}}\\
  +&12 {{s}^{56}} {{t}^{16}}+18 {{s}^{54}} {{t}^{16}}+6 {{s}^{52}} {{t}^{16}}+9 {{s}^{50}} {{t}^{16}}+24 {{s}^{48}} {{t}^{16}}+9 {{s}^{46}} {{t}^{16}}+3 {{s}^{44}} {{t}^{16}}+15 {{s}^{42}} {{t}^{16}}\\
  +&9 {{s}^{40}}{{t}^{16}}+6 {{s}^{36}} {{t}^{16}}+6 {{s}^{34}} {{t}^{16}}+9 {{s}^{70}} {{t}^{15}}+{{s}^{68}} {{t}^{15}}+2 {{s}^{66}} {{t}^{15}}+18 {{s}^{64}} {{t}^{15}}+19 {{s}^{62}} {{t}^{15}}\\
  +&11 {{s}^{60}} {{t}^{15}}+36 {{s}^{58}} {{t}^{15}}+36 {{s}^{56}} {{t}^{15}}+29 {{s}^{54}} {{t}^{15}}+54 {{s}^{52}} {{t}^{15}}+53 {{s}^{50}} {{t}^{15}}+46 {{s}^{48}} {{t}^{15}}\\
  +&72 {{s}^{46}} {{t}^{15}}+61 {{s}^{44}} {{t}^{15}}+36 {{s}^{42}} {{t}^{15}}+72 {{s}^{40}} {{t}^{15}}+62 {{s}^{38}} {{t}^{15}}+25 {{s}^{36}} {{t}^{15}}+45 {{s}^{34}} {{t}^{15}}\\
  +&54 {{s}^{32}}{{t}^{15}}+9 {{s}^{30}} {{t}^{15}}+18 {{s}^{28}} {{t}^{15}}+28 {{s}^{26}} {{t}^{15}}+10 {{s}^{24}} {{t}^{15}}+10 {{s}^{20}} {{t}^{15}}+6 {{s}^{72}} {{t}^{14}}\\
  +&9 {{s}^{68}} {{t}^{14}}+12 {{s}^{66}} {{t}^{14}}+12 {{s}^{64}} {{t}^{14}}+39 {{s}^{62}} {{t}^{14}}+57 {{s}^{60}} {{t}^{14}}+33 {{s}^{58}} {{t}^{14}}+87 {{s}^{56}} {{t}^{14}}\\
  +&132 {{s}^{54}} {{t}^{14}}+84 {{s}^{52}} {{t}^{14}}+138 {{s}^{50}} {{t}^{14}}+213 {{s}^{48}} {{t}^{14}}+159 {{s}^{46}} {{t}^{14}}+165 {{s}^{44}} {{t}^{14}}+252 {{s}^{42}} {{t}^{14}}\\
  +&192 {{s}^{40}}{{t}^{14}}+189 {{s}^{38}} {{t}^{14}}+234 {{s}^{36}} {{t}^{14}}+180 {{s}^{34}} {{t}^{14}}+147 {{s}^{32}} {{t}^{14}}+192 {{s}^{30}} {{t}^{14}}+108 {{s}^{28}} {{t}^{14}}\\
  +&90 {{s}^{26}} {{t}^{14}}+123 {{s}^{24}} {{t}^{14}}+69 {{s}^{22}} {{t}^{14}}+18 {{s}^{20}} {{t}^{14}}+60 {{s}^{18}} {{t}^{14}}+30 {{s}^{16}} {{t}^{14}}+15 {{s}^{12}} {{t}^{14}}\\
  +&15 {{s}^{10}} {{t}^{14}}+18 {{s}^{70}} {{t}^{13}}+3 {{s}^{68}} {{t}^{13}}+6 {{s}^{66}} {{t}^{13}}+51 {{s}^{64}} {{t}^{13}}+54 {{s}^{62}} {{t}^{13}}+51 {{s}^{60}} {{t}^{13}}\\
  +&141{{s}^{58}} {{t}^{13}}+156 {{s}^{56}} {{t}^{13}}+168 {{s}^{54}} {{t}^{13}}+303 {{s}^{52}} {{t}^{13}}+309 {{s}^{50}} {{t}^{13}}+330 {{s}^{48}} {{t}^{13}}+507 {{s}^{46}} {{t}^{13}}\\
  +&492 {{s}^{44}} {{t}^{13}}+423 {{s}^{42}} {{t}^{13}}+645 {{s}^{40}} {{t}^{13}}+618 {{s}^{38}} {{t}^{13}}+459 {{s}^{36}} {{t}^{13}}+603 {{s}^{34}} {{t}^{13}}+621 {{s}^{32}} {{t}^{13}}\\
  +&390 {{s}^{30}} {{t}^{13}}+447 {{s}^{28}} {{t}^{13}}+459 {{s}^{26}} {{t}^{13}}+282 {{s}^{24}} {{t}^{13}}+264 {{s}^{22}} {{t}^{13}}+255 {{s}^{20}} {{t}^{13}}+129 {{s}^{18}} {{t}^{13}}\\
  +&126 {{s}^{16}} {{t}^{13}}+117 {{s}^{14}} {{t}^{13}}+30 {{s}^{12}} {{t}^{13}}+45 {{s}^{10}} {{t}^{13}}+45 {{s}^{8}} {{t}^{13}}+21 {{s}^{2}} {{t}^{13}}+10 {{s}^{72}} {{t}^{12}}\\
  +&18 {{s}^{68}} {{t}^{12}}+28 {{s}^{66}} {{t}^{12}}+27 {{s}^{64}} {{t}^{12}}+101 {{s}^{62}} {{t}^{12}}+155 {{s}^{60}} {{t}^{12}}+118 {{s}^{58}} {{t}^{12}}+290 {{s}^{56}} {{t}^{12}}\\
  +&446 {{s}^{54}} {{t}^{12}}+369 {{s}^{52}} {{t}^{12}}+585 {{s}^{50}} {{t}^{12}}+867 {{s}^{48}} {{t}^{12}}+764 {{s}^{46}} {{t}^{12}}+916{{s}^{44}} {{t}^{12}}+1225 {{s}^{42}} {{t}^{12}}\\
  +&1088 {{s}^{40}} {{t}^{12}}+1166 {{s}^{38}} {{t}^{12}}+1395 {{s}^{36}} {{t}^{12}}+1160 {{s}^{34}} {{t}^{12}}+1141 {{s}^{32}} {{t}^{12}}+1295 {{s}^{30}} {{t}^{12}}\\
  +&964 {{s}^{28}} {{t}^{12}}+828 {{s}^{26}} {{t}^{12}}+981 {{s}^{24}} {{t}^{12}}+649 {{s}^{22}} {{t}^{12}}+434 {{s}^{20}} {{t}^{12}}+533 {{s}^{18}} {{t}^{12}}+361 {{s}^{16}} {{t}^{12}}\\
  +&146 {{s}^{14}} {{t}^{12}}+216 {{s}^{12}} {{t}^{12}}+135 {{s}^{10}} {{t}^{12}}+45 {{s}^{8}} {{t}^{12}}+45 {{s}^{6}} {{t}^{12}}+35 {{s}^{4}} {{t}^{12}}+28 {{t}^{12}}+30 {{s}^{70}} {{t}^{11}}\\
  +&6 {{s}^{68}} {{t}^{11}}+12 {{s}^{66}} {{t}^{11}}+105 {{s}^{64}} {{t}^{11}}+117 {{s}^{62}} {{t}^{11}}+132 {{s}^{60}} {{t}^{11}}+354 {{s}^{58}} {{t}^{11}}+408 {{s}^{56}} {{t}^{11}}\\
  +&501 {{s}^{54}} {{t}^{11}}+870 {{s}^{52}} {{t}^{11}}+951 {{s}^{50}} {{t}^{11}}+1068 {{s}^{48}} {{t}^{11}}+1620 {{s}^{46}} {{t}^{11}}+1644 {{s}^{44}} {{t}^{11}}\\
  +&1590 {{s}^{42}} {{t}^{11}}+2211 {{s}^{40}} {{t}^{11}}+2223 {{s}^{38}} {{t}^{11}}+1857{{s}^{36}} {{t}^{11}}+2304 {{s}^{34}} {{t}^{11}}+2313 {{s}^{32}} {{t}^{11}}\\
  +&1737 {{s}^{30}} {{t}^{11}}+1863 {{s}^{28}} {{t}^{11}}+1848 {{s}^{26}} {{t}^{11}}+1272 {{s}^{24}} {{t}^{11}}+1212 {{s}^{22}} {{t}^{11}}+1098 {{s}^{20}} {{t}^{11}}\\
  +&666 {{s}^{18}} {{t}^{11}}+588 {{s}^{16}} {{t}^{11}}+522 {{s}^{14}} {{t}^{11}}+207 {{s}^{12}} {{t}^{11}}+204 {{s}^{10}} {{t}^{11}}+183 {{s}^{8}} {{t}^{11}}+30 {{s}^{6}} {{t}^{11}}\\
  +&15 {{s}^{4}} {{t}^{11}}+63 {{s}^{2}} {{t}^{11}}+15 {{s}^{72}} {{t}^{10}}+30 {{s}^{68}} {{t}^{10}}+51 {{s}^{66}} {{t}^{10}}+51 {{s}^{64}} {{t}^{10}}+198 {{s}^{62}} {{t}^{10}}\\
  +&315 {{s}^{60}} {{t}^{10}}+270 {{s}^{58}} {{t}^{10}}+633 {{s}^{56}} {{t}^{10}}+969 {{s}^{54}} {{t}^{10}}+882 {{s}^{52}} {{t}^{10}}+1365 {{s}^{50}} {{t}^{10}}+1959 {{s}^{48}} {{t}^{10}}\\
  +&1836 {{s}^{46}} {{t}^{10}}+2211 {{s}^{44}} {{t}^{10}}+2877 {{s}^{42}} {{t}^{10}}+2643 {{s}^{40}} {{t}^{10}}+2847 {{s}^{38}} {{t}^{10}}+3309 {{s}^{36}} {{t}^{10}}\\
  +&2868 {{s}^{34}} {{t}^{10}}+2772 {{s}^{32}} {{t}^{10}}+3072 {{s}^{30}}{{t}^{10}}+2373 {{s}^{28}} {{t}^{10}}+2052 {{s}^{26}} {{t}^{10}}+2235 {{s}^{24}} {{t}^{10}}\\
  +&1581 {{s}^{22}} {{t}^{10}}+1059 {{s}^{20}} {{t}^{10}}+1194 {{s}^{18}} {{t}^{10}}+789 {{s}^{16}} {{t}^{10}}+372 {{s}^{14}} {{t}^{10}}+426 {{s}^{12}} {{t}^{10}}\\
  +&279 {{s}^{10}} {{t}^{10}}+78 {{s}^{8}} {{t}^{10}}+81 {{s}^{6}} {{t}^{10}}+45 {{s}^{4}} {{t}^{10}}+21 {{t}^{10}}+45 {{s}^{70}} {{t}^{9}}+10 {{s}^{68}} {{t}^{9}}+20 {{s}^{66}} {{t}^{9}}\\
  +&180 {{s}^{64}} {{t}^{9}}+199 {{s}^{62}} {{t}^{9}}+246 {{s}^{60}} {{t}^{9}}+603 {{s}^{58}} {{t}^{9}}+720 {{s}^{56}} {{t}^{9}}+866 {{s}^{54}} {{t}^{9}}+1486 {{s}^{52}} {{t}^{9}}\\
  +&1619 {{s}^{50}} {{t}^{9}}+1831 {{s}^{48}} {{t}^{9}}+2638 {{s}^{46}} {{t}^{9}}+2734 {{s}^{44}} {{t}^{9}}+2610 {{s}^{42}} {{t}^{9}}+3519 {{s}^{40}} {{t}^{9}}+3500 {{s}^{38}} {{t}^{9}}\\
  +&2968 {{s}^{36}} {{t}^{9}}+3500 {{s}^{34}} {{t}^{9}}+3519 {{s}^{32}} {{t}^{9}}+2610 {{s}^{30}} {{t}^{9}}+2734 {{s}^{28}} {{t}^{9}}+2638 {{s}^{26}} {{t}^{9}}+1831 {{s}^{24}} {{t}^{9}}\\
  +&1619 {{s}^{22}} {{t}^{9}}+1486{{s}^{20}} {{t}^{9}}+866 {{s}^{18}} {{t}^{9}}+720 {{s}^{16}} {{t}^{9}}+603 {{s}^{14}} {{t}^{9}}+246 {{s}^{12}} {{t}^{9}}+199 {{s}^{10}} {{t}^{9}}\\
  +&180 {{s}^{8}} {{t}^{9}}+20 {{s}^{6}} {{t}^{9}}+10 {{s}^{4}} {{t}^{9}}+45 {{s}^{2}} {{t}^{9}}+21 {{s}^{72}} {{t}^{8}}+45 {{s}^{68}} {{t}^{8}}+81 {{s}^{66}} {{t}^{8}}+78 {{s}^{64}} {{t}^{8}}\\
  +&279 {{s}^{62}} {{t}^{8}}+426 {{s}^{60}} {{t}^{8}}+372 {{s}^{58}} {{t}^{8}}+789 {{s}^{56}} {{t}^{8}}+1194 {{s}^{54}} {{t}^{8}}+1059 {{s}^{52}} {{t}^{8}}+1581 {{s}^{50}} {{t}^{8}}\\
  +&2235 {{s}^{48}} {{t}^{8}}+2052 {{s}^{46}} {{t}^{8}}+2373 {{s}^{44}} {{t}^{8}}+3072 {{s}^{42}} {{t}^{8}}+2772 {{s}^{40}} {{t}^{8}}+2868 {{s}^{38}} {{t}^{8}}+3309 {{s}^{36}} {{t}^{8}}\\
  +&2847 {{s}^{34}} {{t}^{8}}+2643 {{s}^{32}} {{t}^{8}}+2877 {{s}^{30}} {{t}^{8}}+2211 {{s}^{28}} {{t}^{8}}+1836 {{s}^{26}} {{t}^{8}}+1959 {{s}^{24}} {{t}^{8}}+1365 {{s}^{22}} {{t}^{8}}\\
  +&882 {{s}^{20}} {{t}^{8}}+969 {{s}^{18}} {{t}^{8}}+633 {{s}^{16}} {{t}^{8}}+270 {{s}^{14}} {{t}^{8}}+315 {{s}^{12}} {{t}^{8}}+198 {{s}^{10}} {{t}^{8}}+51 {{s}^{8}} {{t}^{8}}+51 {{s}^{6}} {{t}^{8}}\\
  +&30 {{s}^{4}} {{t}^{8}}+15 {{t}^{8}}+63 {{s}^{70}} {{t}^{7}}+15 {{s}^{68}} {{t}^{7}}+30 {{s}^{66}} {{t}^{7}}+183 {{s}^{64}} {{t}^{7}}+204 {{s}^{62}} {{t}^{7}}+207 {{s}^{60}} {{t}^{7}}\\
  +&522 {{s}^{58}} {{t}^{7}}+588 {{s}^{56}} {{t}^{7}}+666 {{s}^{54}} {{t}^{7}}+1098 {{s}^{52}} {{t}^{7}}+1212 {{s}^{50}} {{t}^{7}}+1272 {{s}^{48}} {{t}^{7}}+1848 {{s}^{46}} {{t}^{7}}\\
  +&1863 {{s}^{44}} {{t}^{7}}+1737 {{s}^{42}} {{t}^{7}}+2313 {{s}^{40}} {{t}^{7}}+2304 {{s}^{38}} {{t}^{7}}+1857 {{s}^{36}} {{t}^{7}}+2223 {{s}^{34}} {{t}^{7}}+2211 {{s}^{32}} {{t}^{7}}\\
  +&1590 {{s}^{30}} {{t}^{7}}+1644 {{s}^{28}} {{t}^{7}}+1620 {{s}^{26}} {{t}^{7}}+1068 {{s}^{24}} {{t}^{7}}+951 {{s}^{22}} {{t}^{7}}+870 {{s}^{20}} {{t}^{7}}+501 {{s}^{18}} {{t}^{7}}\\
  +&408 {{s}^{16}} {{t}^{7}}+354 {{s}^{14}} {{t}^{7}}+132 {{s}^{12}} {{t}^{7}}+117 {{s}^{10}} {{t}^{7}}+105 {{s}^{8}} {{t}^{7}}+12 {{s}^{6}} {{t}^{7}}+6 {{s}^{4}} {{t}^{7}}+30 {{s}^{2}} {{t}^{7}}\\
  +&28 {{s}^{72}} {{t}^{6}}+35 {{s}^{68}} {{t}^{6}}+45 {{s}^{66}} {{t}^{6}}+45 {{s}^{64}} {{t}^{6}}+135 {{s}^{62}} {{t}^{6}}+216 {{s}^{60}} {{t}^{6}}+146 {{s}^{58}} {{t}^{6}}+361 {{s}^{56}} {{t}^{6}}\\
  +&533 {{s}^{54}} {{t}^{6}}+434 {{s}^{52}} {{t}^{6}}+649 {{s}^{50}} {{t}^{6}}+981 {{s}^{48}} {{t}^{6}}+828 {{s}^{46}} {{t}^{6}}+964 {{s}^{44}} {{t}^{6}}+1295 {{s}^{42}} {{t}^{6}}\\
  +&1141 {{s}^{40}} {{t}^{6}}+1160 {{s}^{38}} {{t}^{6}}+1395 {{s}^{36}} {{t}^{6}}+1166 {{s}^{34}} {{t}^{6}}+1088 {{s}^{32}} {{t}^{6}}+1225 {{s}^{30}} {{t}^{6}}+916 {{s}^{28}} {{t}^{6}}\\
  +&764 {{s}^{26}} {{t}^{6}}+867 {{s}^{24}} {{t}^{6}}+585 {{s}^{22}} {{t}^{6}}+369 {{s}^{20}} {{t}^{6}}+446 {{s}^{18}} {{t}^{6}}+290 {{s}^{16}} {{t}^{6}}+118 {{s}^{14}} {{t}^{6}}\\
  +&155 {{s}^{12}} {{t}^{6}}+101 {{s}^{10}} {{t}^{6}}+27 {{s}^{8}} {{t}^{6}}+28 {{s}^{6}} {{t}^{6}}+18 {{s}^{4}} {{t}^{6}}+10 {{t}^{6}}+21 {{s}^{70}} {{t}^{5}}+45 {{s}^{64}} {{t}^{5}}+45 {{s}^{62}} {{t}^{5}}\\
  +&30 {{s}^{60}} {{t}^{5}}+117 {{s}^{58}} {{t}^{5}}+126 {{s}^{56}} {{t}^{5}}+129 {{s}^{54}} {{t}^{5}}+255 {{s}^{52}} {{t}^{5}}+264 {{s}^{50}} {{t}^{5}}+282 {{s}^{48}} {{t}^{5}}+459 {{s}^{46}} {{t}^{5}}\\
  +&447 {{s}^{44}} {{t}^{5}}+390{{s}^{42}} {{t}^{5}}+621 {{s}^{40}} {{t}^{5}}+603 {{s}^{38}} {{t}^{5}}+459 {{s}^{36}} {{t}^{5}}+618 {{s}^{34}} {{t}^{5}}+645 {{s}^{32}} {{t}^{5}}\\
  +&423 {{s}^{30}} {{t}^{5}}+492 {{s}^{28}} {{t}^{5}}+507 {{s}^{26}} {{t}^{5}}+330 {{s}^{24}} {{t}^{5}}+309 {{s}^{22}} {{t}^{5}}+303 {{s}^{20}} {{t}^{5}}+168 {{s}^{18}} {{t}^{5}}\\
  +&156 {{s}^{16}} {{t}^{5}}+141 {{s}^{14}} {{t}^{5}}+51 {{s}^{12}} {{t}^{5}}+54 {{s}^{10}} {{t}^{5}}+51 {{s}^{8}} {{t}^{5}}+6 {{s}^{6}} {{t}^{5}}+3 {{s}^{4}} {{t}^{5}}+18 {{s}^{2}} {{t}^{5}}\\
  +&15 {{s}^{62}} {{t}^{4}}+15 {{s}^{60}}{{t}^{4}}+30 {{s}^{56}} {{t}^{4}}+60 {{s}^{54}} {{t}^{4}}+18 {{s}^{52}} {{t}^{4}}+69 {{s}^{50}} {{t}^{4}}+123 {{s}^{48}} {{t}^{4}}+90 {{s}^{46}} {{t}^{4}}\\
  +&108 {{s}^{44}} {{t}^{4}}+192 {{s}^{42}} {{t}^{4}}+147 {{s}^{40}} {{t}^{4}}+180 {{s}^{38}} {{t}^{4}}+234 {{s}^{36}} {{t}^{4}}+189 {{s}^{34}} {{t}^{4}}+192 {{s}^{32}} {{t}^{4}}\\
  +&252 {{s}^{30}} {{t}^{4}}+165 {{s}^{28}} {{t}^{4}}+159 {{s}^{26}} {{t}^{4}}+213 {{s}^{24}} {{t}^{4}}+138 {{s}^{22}} {{t}^{4}}+84 {{s}^{20}} {{t}^{4}}+132 {{s}^{18}} {{t}^{4}}\\
  +&87 {{s}^{16}} {{t}^{4}}+33{{s}^{14}} {{t}^{4}}+57 {{s}^{12}} {{t}^{4}}+\underline{39 {{s}^{10}} {{t}^{4}}}+12 {{s}^{8}} {{t}^{4}}+12 {{s}^{6}} {{t}^{4}}+9 {{s}^{4}} {{t}^{4}}+6 {{t}^{4}}+10 {{s}^{52}} {{t}^{3}}\\
  +&10 {{s}^{48}} {{t}^{3}}+28 {{s}^{46}} {{t}^{3}}+18 {{s}^{44}} {{t}^{3}}+9 {{s}^{42}} {{t}^{3}}+54 {{s}^{40}} {{t}^{3}}+45 {{s}^{38}} {{t}^{3}}+25 {{s}^{36}} {{t}^{3}}+62 {{s}^{34}} {{t}^{3}}\\
  +&72 {{s}^{32}} {{t}^{3}}+36 {{s}^{30}} {{t}^{3}}+61 {{s}^{28}} {{t}^{3}}+72 {{s}^{26}} {{t}^{3}}+46 {{s}^{24}} {{t}^{3}}+53 {{s}^{22}} {{t}^{3}}+54 {{s}^{20}} {{t}^{3}}+29 {{s}^{18}} {{t}^{3}}\\
  +&36{{s}^{16}} {{t}^{3}}+36 {{s}^{14}} {{t}^{3}}+11 {{s}^{12}} {{t}^{3}}+19 {{s}^{10}} {{t}^{3}}+18 {{s}^{8}} {{t}^{3}}+2 {{s}^{6}} {{t}^{3}}+{{s}^{4}} {{t}^{3}}+9 {{s}^{2}} {{t}^{3}}+6 {{s}^{38}} {{t}^{2}}\\
  +&6 {{s}^{36}} {{t}^{2}}+9 {{s}^{32}} {{t}^{2}}+15 {{s}^{30}} {{t}^{2}}+3 {{s}^{28}} {{t}^{2}}+9 {{s}^{26}} {{t}^{2}}+24 {{s}^{24}} {{t}^{2}}+9 {{s}^{22}} {{t}^{2}}+6 {{s}^{20}} {{t}^{2}}+18 {{s}^{18}} {{t}^{2}}\\
  +&12 {{s}^{16}} {{t}^{2}}+3 {{s}^{14}} {{t}^{2}}+12 {{s}^{12}} {{t}^{2}}+9 {{s}^{10}} {{t}^{2}}+3 {{s}^{8}} {{t}^{2}}+3 {{s}^{6}} {{t}^{2}}+3 {{s}^{4}} {{t}^{2}}+3 {{t}^{2}}+3{{s}^{22}} t+3 {{s}^{16}} t\\
  +&3 {{s}^{14}} t+3 {{s}^{10}} t+3 {{s}^{8}} t+3 {{s}^{2}} t+1
\end{align*}

\begin{align*}
  &P(\Hom(\Z^3,E_7)_0;s,t)={{s}^{126}} {{t}^{21}}+3 {{s}^{124}} {{t}^{20}}+3 {{s}^{116}} {{t}^{20}}+3 {{s}^{112}} {{t}^{20}}+3 {{s}^{108}} {{t}^{20}}\\
  +&3 {{s}^{104}} {{t}^{20}}+3 {{s}^{100}} {{t}^{20}}+3 {{s}^{92}} {{t}^{20}}+3 {{s}^{126}} {{t}^{19}}+3 {{s}^{122}} {{t}^{19}}+3 {{s}^{118}} {{t}^{19}}+12 {{s}^{114}} {{t}^{19}}\\
  +&12 {{s}^{110}} {{t}^{19}}+15 {{s}^{106}} {{t}^{19}}+21 {{s}^{102}} {{t}^{19}}+21 {{s}^{98}} {{t}^{19}}+21 {{s}^{94}} {{t}^{19}}+27 {{s}^{90}} {{t}^{19}}+18 {{s}^{86}} {{t}^{19}}\\
  +&18 {{s}^{82}} {{t}^{19}}+15 {{s}^{78}} {{t}^{19}}+9 {{s}^{74}} {{t}^{19}}+6 {{s}^{70}} {{t}^{19}}+6 {{s}^{66}} {{t}^{19}}+9 {{s}^{124}} {{t}^{18}}+2 {{s}^{120}} {{t}^{18}}\\
  +&19 {{s}^{116}} {{t}^{18}}+28 {{s}^{112}} {{t}^{18}}+38 {{s}^{108}} {{t}^{18}}+55 {{s}^{104}} {{t}^{18}}+82 {{s}^{100}} {{t}^{18}}+83 {{s}^{96}} {{t}^{18}}+117 {{s}^{92}} {{t}^{18}}\\
  +&116 {{s}^{88}} {{t}^{18}}+126 {{s}^{84}} {{t}^{18}}+124 {{s}^{80}} {{t}^{18}}+125 {{s}^{76}} {{t}^{18}}+98 {{s}^{72}} {{t}^{18}}+98 {{s}^{68}} {{t}^{18}}+71 {{s}^{64}} {{t}^{18}}\\
  +&55 {{s}^{60}} {{t}^{18}}+36 {{s}^{56}}{{t}^{18}}+28 {{s}^{52}} {{t}^{18}}+10 {{s}^{48}} {{t}^{18}}+10 {{s}^{44}} {{t}^{18}}+6 {{s}^{126}} {{t}^{17}}+9 {{s}^{122}} {{t}^{17}}\\
  +&12 {{s}^{118}} {{t}^{17}}+48 {{s}^{114}} {{t}^{17}}+63 {{s}^{110}} {{t}^{17}}+105 {{s}^{106}} {{t}^{17}}+168 {{s}^{102}} {{t}^{17}}+228 {{s}^{98}} {{t}^{17}}\\
  +&285 {{s}^{94}} {{t}^{17}}+384 {{s}^{90}} {{t}^{17}}+420 {{s}^{86}} {{t}^{17}}+492 {{s}^{82}} {{t}^{17}}+522 {{s}^{78}} {{t}^{17}}+534 {{s}^{74}} {{t}^{17}}+507 {{s}^{70}} {{t}^{17}}\\
  +&501 {{s}^{66}} {{t}^{17}}+414 {{s}^{62}} {{t}^{17}}+372 {{s}^{58}} {{t}^{17}}+285 {{s}^{54}} {{t}^{17}}+228 {{s}^{50}} {{t}^{17}}+153 {{s}^{46}} {{t}^{17}}+114 {{s}^{42}} {{t}^{17}}\\
  +&60 {{s}^{38}} {{t}^{17}}+45 {{s}^{34}} {{t}^{17}}+15 {{s}^{30}} {{t}^{17}}+15 {{s}^{26}} {{t}^{17}}+18 {{s}^{124}} {{t}^{16}}+6 {{s}^{120}} {{t}^{16}}+51 {{s}^{116}} {{t}^{16}}+87 {{s}^{112}} {{t}^{16}}\\
  +&153 {{s}^{108}} {{t}^{16}}+252 {{s}^{104}} {{t}^{16}}+426 {{s}^{100}} {{t}^{16}}+552 {{s}^{96}} {{t}^{16}}+819 {{s}^{92}} {{t}^{16}}+1011 {{s}^{88}} {{t}^{16}}+1257{{s}^{84}} {{t}^{16}}\\
  +&1446 {{s}^{80}} {{t}^{16}}+1644 {{s}^{76}} {{t}^{16}}+1689 {{s}^{72}} {{t}^{16}}+1779 {{s}^{68}} {{t}^{16}}+1683 {{s}^{64}} {{t}^{16}}+1608 {{s}^{60}} {{t}^{16}}\\
  +&1404 {{s}^{56}} {{t}^{16}}+1218 {{s}^{52}} {{t}^{16}}+966 {{s}^{48}} {{t}^{16}}+771 {{s}^{44}} {{t}^{16}}+537 {{s}^{40}} {{t}^{16}}+399 {{s}^{36}} {{t}^{16}}+237 {{s}^{32}} {{t}^{16}}\\
  +&159 {{s}^{28}} {{t}^{16}}+90 {{s}^{24}} {{t}^{16}}+45 {{s}^{20}} {{t}^{16}}+21 {{s}^{16}} {{t}^{16}}+21 {{s}^{12}} {{t}^{16}}+10 {{s}^{126}}{{t}^{15}}+18 {{s}^{122}} {{t}^{15}}+27 {{s}^{118}} {{t}^{15}}\\
  +&110 {{s}^{114}} {{t}^{15}}+173 {{s}^{110}} {{t}^{15}}+336 {{s}^{106}} {{t}^{15}}+582 {{s}^{102}} {{t}^{15}}+922 {{s}^{98}} {{t}^{15}}+1317 {{s}^{94}} {{t}^{15}}+1913 {{s}^{90}} {{t}^{15}}\\
  +&2421 {{s}^{86}} {{t}^{15}}+3085 {{s}^{82}} {{t}^{15}}+3618 {{s}^{78}} {{t}^{15}}+4135 {{s}^{74}} {{t}^{15}}+4423 {{s}^{70}} {{t}^{15}}+4660 {{s}^{66}} {{t}^{15}}\\
  +&4541 {{s}^{62}} {{t}^{15}}+4408 {{s}^{58}} {{t}^{15}}+3927 {{s}^{54}} {{t}^{15}}+3493 {{s}^{50}} {{t}^{15}}+2855 {{s}^{46}} {{t}^{15}}+2299 {{s}^{42}} {{t}^{15}}+1694 {{s}^{38}} {{t}^{15}}\\
  +&1271 {{s}^{34}} {{t}^{15}}+785 {{s}^{30}} {{t}^{15}}+567 {{s}^{26}} {{t}^{15}}+305 {{s}^{22}} {{t}^{15}}+180 {{s}^{18}} {{t}^{15}}+98 {{s}^{14}} {{t}^{15}}+63 {{s}^{10}} {{t}^{15}}+28 {{s}^{2}} {{t}^{15}}\\
  +&30 {{s}^{124}} {{t}^{14}}+12 {{s}^{120}} {{t}^{14}}+99 {{s}^{116}} {{t}^{14}}+183 {{s}^{112}} {{t}^{14}}+378 {{s}^{108}} {{t}^{14}}+678 {{s}^{104}} {{t}^{14}}+1248 {{s}^{100}} {{t}^{14}}\\
  +&1851 {{s}^{96}} {{t}^{14}}+2886 {{s}^{92}} {{t}^{14}}+3921 {{s}^{88}} {{t}^{14}}+5247 {{s}^{84}} {{t}^{14}}+6492 {{s}^{80}} {{t}^{14}}+7818 {{s}^{76}} {{t}^{14}}+8763 {{s}^{72}} {{t}^{14}}\\
  +&9642 {{s}^{68}} {{t}^{14}}+9867 {{s}^{64}} {{t}^{14}}+9957 {{s}^{60}} {{t}^{14}}+9357 {{s}^{56}} {{t}^{14}}+8607 {{s}^{52}} {{t}^{14}}+7452 {{s}^{48}} {{t}^{14}}+6252 {{s}^{44}} {{t}^{14}}\\
  +&4857 {{s}^{40}} {{t}^{14}}+3783 {{s}^{36}} {{t}^{14}}+2586 {{s}^{32}} {{t}^{14}}+1791 {{s}^{28}} {{t}^{14}}+1134 {{s}^{24}} {{t}^{14}}+663 {{s}^{20}} {{t}^{14}}+348 {{s}^{16}} {{t}^{14}}\\
  +&231 {{s}^{12}} {{t}^{14}}+63 {{s}^{8}} {{t}^{14}}+48 {{s}^{4}} {{t}^{14}}+36 {{t}^{14}}+15 {{s}^{126}} {{t}^{13}}+30 {{s}^{122}} {{t}^{13}}+48 {{s}^{118}} {{t}^{13}}+198 {{s}^{114}} {{t}^{13}}\\
  +&348 {{s}^{110}} {{t}^{13}}+750 {{s}^{106}} {{t}^{13}}+1374 {{s}^{102}} {{t}^{13}}+2370 {{s}^{98}} {{t}^{13}}+3639 {{s}^{94}} {{t}^{13}}+5499 {{s}^{90}} {{t}^{13}}+7437 {{s}^{86}} {{t}^{13}}\\
  +&9858 {{s}^{82}} {{t}^{13}}+12066{{s}^{78}} {{t}^{13}}+14337 {{s}^{74}} {{t}^{13}}+15978 {{s}^{70}} {{t}^{13}}+17280 {{s}^{66}} {{t}^{13}}+17574 {{s}^{62}} {{t}^{13}}\\
  +&17451 {{s}^{58}} {{t}^{13}}+16173 {{s}^{54}} {{t}^{13}}+14709 {{s}^{50}} {{t}^{13}}+12474 {{s}^{46}} {{t}^{13}}+10248 {{s}^{42}} {{t}^{13}}+7866 {{s}^{38}} {{t}^{13}}\\
  +&5907 {{s}^{34}} {{t}^{13}}+3900 {{s}^{30}} {{t}^{13}}+2691 {{s}^{26}} {{t}^{13}}+1545 {{s}^{22}} {{t}^{13}}+885 {{s}^{18}} {{t}^{13}}+459 {{s}^{14}} {{t}^{13}}+255 {{s}^{10}} {{t}^{13}}\\
  +&42 {{s}^{6}} {{t}^{13}}+84 {{s}^{2}} {{t}^{13}}+45 {{s}^{124}} {{t}^{12}}+20 {{s}^{120}} {{t}^{12}}+163 {{s}^{116}} {{t}^{12}}+316 {{s}^{112}} {{t}^{12}}+724 {{s}^{108}} {{t}^{12}}\\
  +&1370 {{s}^{104}} {{t}^{12}}+2613 {{s}^{100}} {{t}^{12}}+4120 {{s}^{96}} {{t}^{12}}+6555 {{s}^{92}} {{t}^{12}}+9217 {{s}^{88}} {{t}^{12}}+12605 {{s}^{84}} {{t}^{12}}\\
  +&15919 {{s}^{80}} {{t}^{12}}+19438 {{s}^{76}} {{t}^{12}}+22186 {{s}^{72}} {{t}^{12}}+24584 {{s}^{68}} {{t}^{12}}+25537 {{s}^{64}} {{t}^{12}}+25867 {{s}^{60}} {{t}^{12}}\\
  +&24557 {{s}^{56}} {{t}^{12}}+22625 {{s}^{52}} {{t}^{12}}+19660 {{s}^{48}} {{t}^{12}}+16444 {{s}^{44}} {{t}^{12}}+12817 {{s}^{40}} {{t}^{12}}+9770 {{s}^{36}} {{t}^{12}}\\
  +&6680 {{s}^{32}} {{t}^{12}}+4484 {{s}^{28}} {{t}^{12}}+2722 {{s}^{24}} {{t}^{12}}+1522 {{s}^{20}} {{t}^{12}}+747 {{s}^{16}} {{t}^{12}}+424 {{s}^{12}} {{t}^{12}}+108 {{s}^{8}} {{t}^{12}}\\
  +&63 {{s}^{4}} {{t}^{12}}+28 {{t}^{12}}+21 {{s}^{126}} {{t}^{11}}+45 {{s}^{122}} {{t}^{11}}+75 {{s}^{118}}{{t}^{11}}+312 {{s}^{114}} {{t}^{11}}+588 {{s}^{110}} {{t}^{11}}+1308 {{s}^{106}} {{t}^{11}}\\
  +&2445 {{s}^{102}} {{t}^{11}}+4269 {{s}^{98}} {{t}^{11}}+6639 {{s}^{94}} {{t}^{11}}+10017 {{s}^{90}} {{t}^{11}}+13617 {{s}^{86}} {{t}^{11}}+17970 {{s}^{82}} {{t}^{11}}\\
  +&21987 {{s}^{78}} {{t}^{11}}+25947 {{s}^{74}} {{t}^{11}}+28764 {{s}^{70}} {{t}^{11}}+30858 {{s}^{66}} {{t}^{11}}+31137 {{s}^{62}} {{t}^{11}}+30522 {{s}^{58}} {{t}^{11}}\\
  +&28035 {{s}^{54}} {{t}^{11}}+25053 {{s}^{50}}{{t}^{11}}+20913 {{s}^{46}} {{t}^{11}}+16848 {{s}^{42}} {{t}^{11}}+12603 {{s}^{38}} {{t}^{11}}+9129 {{s}^{34}} {{t}^{11}}\\
  +&5874 {{s}^{30}} {{t}^{11}}+3795 {{s}^{26}} {{t}^{11}}+2040 {{s}^{22}} {{t}^{11}}+1083 {{s}^{18}} {{t}^{11}}+486 {{s}^{14}} {{t}^{11}}+243 {{s}^{10}} {{t}^{11}}+30 {{s}^{6}} {{t}^{11}}\\
  +&63 {{s}^{2}} {{t}^{11}}+63 {{s}^{124}} {{t}^{10}}+30 {{s}^{120}} {{t}^{10}}+243 {{s}^{116}} {{t}^{10}}+486 {{s}^{112}} {{t}^{10}}+1083 {{s}^{108}} {{t}^{10}}+2040 {{s}^{104}} {{t}^{10}}\\
  +&3795 {{s}^{100}} {{t}^{10}}+5874 {{s}^{96}} {{t}^{10}}+9129 {{s}^{92}} {{t}^{10}}+12603 {{s}^{88}} {{t}^{10}}+16848 {{s}^{84}} {{t}^{10}}+20913 {{s}^{80}} {{t}^{10}}\\
  +&25053 {{s}^{76}} {{t}^{10}}+28035 {{s}^{72}} {{t}^{10}}+30522 {{s}^{68}} {{t}^{10}}+31137 {{s}^{64}} {{t}^{10}}+30858 {{s}^{60}} {{t}^{10}}+28764 {{s}^{56}} {{t}^{10}}\\
  +&25947 {{s}^{52}} {{t}^{10}}+21987 {{s}^{48}} {{t}^{10}}+17970 {{s}^{44}} {{t}^{10}}+13617 {{s}^{40}} {{t}^{10}}+10017 {{s}^{36}} {{t}^{10}}+6639 {{s}^{32}} {{t}^{10}}\\
  +&4269 {{s}^{28}} {{t}^{10}}+2445 {{s}^{24}} {{t}^{10}}+1308 {{s}^{20}} {{t}^{10}}+588 {{s}^{16}} {{t}^{10}}+312 {{s}^{12}} {{t}^{10}}+75 {{s}^{8}} {{t}^{10}}+45 {{s}^{4}} {{t}^{10}}\\
  +&21 {{t}^{10}}+28 {{s}^{126}} {{t}^{9}}+63 {{s}^{122}} {{t}^{9}}+108 {{s}^{118}} {{t}^{9}}+424 {{s}^{114}} {{t}^{9}}+747 {{s}^{110}} {{t}^{9}}+1522 {{s}^{106}} {{t}^{9}}\\
  +&2722 {{s}^{102}} {{t}^{9}}+4484 {{s}^{98}} {{t}^{9}}+6680 {{s}^{94}} {{t}^{9}}+9770 {{s}^{90}} {{t}^{9}}+12817 {{s}^{86}}{{t}^{9}}+16444 {{s}^{82}} {{t}^{9}}+19660 {{s}^{78}} {{t}^{9}}\\
  +&22625 {{s}^{74}} {{t}^{9}}+24557 {{s}^{70}} {{t}^{9}}+25867 {{s}^{66}} {{t}^{9}}+25537 {{s}^{62}} {{t}^{9}}+24584 {{s}^{58}} {{t}^{9}}+22186 {{s}^{54}} {{t}^{9}}\\
  +&19438 {{s}^{50}} {{t}^{9}}+15919 {{s}^{46}} {{t}^{9}}+12605 {{s}^{42}} {{t}^{9}}+9217 {{s}^{38}} {{t}^{9}}+6555 {{s}^{34}} {{t}^{9}}+4120 {{s}^{30}} {{t}^{9}}+2613 {{s}^{26}} {{t}^{9}}\\
  +&1370 {{s}^{22}} {{t}^{9}}+724 {{s}^{18}} {{t}^{9}}+316 {{s}^{14}} {{t}^{9}}+163{{s}^{10}} {{t}^{9}}+20 {{s}^{6}} {{t}^{9}}+45 {{s}^{2}} {{t}^{9}}+84 {{s}^{124}} {{t}^{8}}+42 {{s}^{120}} {{t}^{8}}\\
  +&255 {{s}^{116}} {{t}^{8}}+459 {{s}^{112}} {{t}^{8}}+885 {{s}^{108}} {{t}^{8}}+1545 {{s}^{104}} {{t}^{8}}+2691 {{s}^{100}} {{t}^{8}}+3900 {{s}^{96}} {{t}^{8}}+5907 {{s}^{92}} {{t}^{8}}\\
  +&7866 {{s}^{88}} {{t}^{8}}+10248 {{s}^{84}} {{t}^{8}}+12474 {{s}^{80}} {{t}^{8}}+14709 {{s}^{76}} {{t}^{8}}+16173 {{s}^{72}} {{t}^{8}}+17451 {{s}^{68}} {{t}^{8}}\\
  +&17574 {{s}^{64}} {{t}^{8}}+17280 {{s}^{60}}{{t}^{8}}+15978 {{s}^{56}} {{t}^{8}}+14337 {{s}^{52}} {{t}^{8}}+12066 {{s}^{48}} {{t}^{8}}+9858 {{s}^{44}} {{t}^{8}}\\
  +&7437 {{s}^{40}} {{t}^{8}}+5499 {{s}^{36}} {{t}^{8}}+3639 {{s}^{32}} {{t}^{8}}+2370 {{s}^{28}} {{t}^{8}}+1374 {{s}^{24}} {{t}^{8}}+750 {{s}^{20}} {{t}^{8}}+348 {{s}^{16}} {{t}^{8}}\\
  +&198 {{s}^{12}} {{t}^{8}}+48 {{s}^{8}} {{t}^{8}}+30 {{s}^{4}} {{t}^{8}}+15 {{t}^{8}}+36 {{s}^{126}} {{t}^{7}}+48 {{s}^{122}} {{t}^{7}}+63 {{s}^{118}} {{t}^{7}}+231 {{s}^{114}} {{t}^{7}}\\
  +&348 {{s}^{110}} {{t}^{7}}+663{{s}^{106}} {{t}^{7}}+1134 {{s}^{102}} {{t}^{7}}+1791 {{s}^{98}} {{t}^{7}}+2586 {{s}^{94}} {{t}^{7}}+3783 {{s}^{90}} {{t}^{7}}+4857 {{s}^{86}} {{t}^{7}}\\
  +&6252 {{s}^{82}} {{t}^{7}}+7452 {{s}^{78}} {{t}^{7}}+8607 {{s}^{74}} {{t}^{7}}+9357 {{s}^{70}} {{t}^{7}}+9957 {{s}^{66}} {{t}^{7}}+9867 {{s}^{62}} {{t}^{7}}+9642 {{s}^{58}} {{t}^{7}}\\
  +&8763 {{s}^{54}} {{t}^{7}}+7818 {{s}^{50}} {{t}^{7}}+6492 {{s}^{46}} {{t}^{7}}+5247 {{s}^{42}} {{t}^{7}}+3921 {{s}^{38}} {{t}^{7}}+2886 {{s}^{34}} {{t}^{7}}+1851{{s}^{30}} {{t}^{7}}\\
  +&1248 {{s}^{26}} {{t}^{7}}+678 {{s}^{22}} {{t}^{7}}+378 {{s}^{18}} {{t}^{7}}+183 {{s}^{14}} {{t}^{7}}+99 {{s}^{10}} {{t}^{7}}+12 {{s}^{6}} {{t}^{7}}+30 {{s}^{2}} {{t}^{7}}+28 {{s}^{124}} {{t}^{6}}\\
  +&63 {{s}^{116}} {{t}^{6}}+98 {{s}^{112}} {{t}^{6}}+180 {{s}^{108}} {{t}^{6}}+305 {{s}^{104}} {{t}^{6}}+567 {{s}^{100}} {{t}^{6}}+785 {{s}^{96}} {{t}^{6}}+1271 {{s}^{92}} {{t}^{6}}\\
  +&1694 {{s}^{88}} {{t}^{6}}+2299 {{s}^{84}} {{t}^{6}}+2855 {{s}^{80}} {{t}^{6}}+3493 {{s}^{76}} {{t}^{6}}+3927 {{s}^{72}} {{t}^{6}}+4408 {{s}^{68}} {{t}^{6}}+4541 {{s}^{64}} {{t}^{6}}\\
  +&4660 {{s}^{60}} {{t}^{6}}+4423 {{s}^{56}} {{t}^{6}}+4135 {{s}^{52}} {{t}^{6}}+3618 {{s}^{48}} {{t}^{6}}+3085 {{s}^{44}} {{t}^{6}}+2421 {{s}^{40}} {{t}^{6}}+1913 {{s}^{36}} {{t}^{6}}\\
  +&1317 {{s}^{32}} {{t}^{6}}+922 {{s}^{28}} {{t}^{6}}+582 {{s}^{24}} {{t}^{6}}+336 {{s}^{20}} {{t}^{6}}+173 {{s}^{16}} {{t}^{6}}+110 {{s}^{12}} {{t}^{6}}+27 {{s}^{8}} {{t}^{6}}+18 {{s}^{4}} {{t}^{6}}\\
  +&10 {{t}^{6}}+21 {{s}^{114}} {{t}^{5}}+21 {{s}^{110}} {{t}^{5}}+45 {{s}^{106}} {{t}^{5}}+90 {{s}^{102}} {{t}^{5}}+159 {{s}^{98}} {{t}^{5}}+237 {{s}^{94}} {{t}^{5}}+399 {{s}^{90}} {{t}^{5}}\\
  +&537 {{s}^{86}} {{t}^{5}}+771 {{s}^{82}} {{t}^{5}}+966 {{s}^{78}} {{t}^{5}}+1218 {{s}^{74}} {{t}^{5}}+1404 {{s}^{70}} {{t}^{5}}+1608 {{s}^{66}} {{t}^{5}}+1683 {{s}^{62}} {{t}^{5}}\\
  +&1779 {{s}^{58}} {{t}^{5}}+1689 {{s}^{54}} {{t}^{5}}+1644 {{s}^{50}} {{t}^{5}}+1446 {{s}^{46}} {{t}^{5}}+1257 {{s}^{42}} {{t}^{5}}+1011 {{s}^{38}} {{t}^{5}}+819 {{s}^{34}} {{t}^{5}}\\
  +&552 {{s}^{30}} {{t}^{5}}+426 {{s}^{26}} {{t}^{5}}+252 {{s}^{22}} {{t}^{5}}+153 {{s}^{18}} {{t}^{5}}+87 {{s}^{14}} {{t}^{5}}+51 {{s}^{10}} {{t}^{5}}+6 {{s}^{6}} {{t}^{5}}+18 {{s}^{2}} {{t}^{5}}\\
  +&15 {{s}^{100}} {{t}^{4}}+15 {{s}^{96}} {{t}^{4}}+45 {{s}^{92}} {{t}^{4}}+60 {{s}^{88}} {{t}^{4}}+114 {{s}^{84}} {{t}^{4}}+153 {{s}^{80}} {{t}^{4}}+228 {{s}^{76}} {{t}^{4}}+285 {{s}^{72}} {{t}^{4}}\\
  +&372 {{s}^{68}} {{t}^{4}}+414 {{s}^{64}} {{t}^{4}}+501 {{s}^{60}} {{t}^{4}}+507 {{s}^{56}} {{t}^{4}}+534 {{s}^{52}} {{t}^{4}}+522 {{s}^{48}} {{t}^{4}}+492 {{s}^{44}} {{t}^{4}}\\
  +&420 {{s}^{40}} {{t}^{4}}+384 {{s}^{36}} {{t}^{4}}+285 {{s}^{32}} {{t}^{4}}+228 {{s}^{28}} {{t}^{4}}+168 {{s}^{24}} {{t}^{4}}+105 {{s}^{20}} {{t}^{4}}+63 {{s}^{16}} {{t}^{4}}\\
  +&48 {{s}^{12}} {{t}^{4}}+12 {{s}^{8}} {{t}^{4}}+9 {{s}^{4}} {{t}^{4}}+6 {{t}^{4}}+10 {{s}^{82}} {{t}^{3}}+10 {{s}^{78}} {{t}^{3}}+28 {{s}^{74}} {{t}^{3}}+36 {{s}^{70}} {{t}^{3}}+55 {{s}^{66}} {{t}^{3}}\\
  +&71 {{s}^{62}} {{t}^{3}}+98 {{s}^{58}} {{t}^{3}}+98 {{s}^{54}} {{t}^{3}}+125 {{s}^{50}} {{t}^{3}}+124 {{s}^{46}} {{t}^{3}}+126 {{s}^{42}} {{t}^{3}}+116 {{s}^{38}} {{t}^{3}}+117 {{s}^{34}} {{t}^{3}}\\
  +&83 {{s}^{30}} {{t}^{3}}+82 {{s}^{26}} {{t}^{3}}+55 {{s}^{22}} {{t}^{3}}+38 {{s}^{18}} {{t}^{3}}+28 {{s}^{14}} {{t}^{3}}+19 {{s}^{10}} {{t}^{3}}+\underline{2 {{s}^{6}} {{t}^{3}}}+9 {{s}^{2}} {{t}^{3}}+6 {{s}^{60}} {{t}^{2}}\\
  +&6 {{s}^{56}} {{t}^{2}}+9 {{s}^{52}} {{t}^{2}}+15 {{s}^{48}} {{t}^{2}}+18 {{s}^{44}} {{t}^{2}}+18 {{s}^{40}} {{t}^{2}}+27 {{s}^{36}} {{t}^{2}}+21 {{s}^{32}} {{t}^{2}}+21 {{s}^{28}} {{t}^{2}}+21 {{s}^{24}} {{t}^{2}}\\
  +&15 {{s}^{20}} {{t}^{2}}+12 {{s}^{16}} {{t}^{2}}+12 {{s}^{12}} {{t}^{2}}+3 {{s}^{8}} {{t}^{2}}+3 {{s}^{4}} {{t}^{2}}+3 {{t}^{2}}+3 {{s}^{34}} t+3 {{s}^{26}} t+3 {{s}^{22}} t+3 {{s}^{18}} t\\
  +&3 {{s}^{14}} t+3 {{s}^{10}} t+3 {{s}^{2}} t+1
\end{align*}

\begin{align*}
  &P(\Hom(\Z^3,E_8)_0;s,t)={{s}^{240}} {{t}^{24}}+3 {{s}^{238}} {{t}^{23}}+3 {{s}^{226}} {{t}^{23}}+3 {{s}^{218}} {{t}^{23}}+3 {{s}^{214}} {{t}^{23}}\\
  +&3 {{s}^{206}} {{t}^{23}}+3 {{s}^{202}} {{t}^{23}}+3 {{s}^{194}} {{t}^{23}}+3 {{s}^{182}} {{t}^{23}}+3 {{s}^{240}} {{t}^{22}}+3 {{s}^{236}} {{t}^{22}}+3 {{s}^{228}} {{t}^{22}}\\
  +&9 {{s}^{224}} {{t}^{22}}+3 {{s}^{220}} {{t}^{22}}+12 {{s}^{216}} {{t}^{22}}+12 {{s}^{212}} {{t}^{22}}+3 {{s}^{208}} {{t}^{22}}+21{{s}^{204}} {{t}^{22}}+18 {{s}^{200}} {{t}^{22}}\\
  +&6 {{s}^{196}} {{t}^{22}}+27 {{s}^{192}} {{t}^{22}}+12 {{s}^{188}} {{t}^{22}}+12 {{s}^{184}} {{t}^{22}}+33 {{s}^{180}} {{t}^{22}}+9 {{s}^{176}} {{t}^{22}}+12 {{s}^{172}} {{t}^{22}}\\
  +&24 {{s}^{168}} {{t}^{22}}+3 {{s}^{164}} {{t}^{22}}+15 {{s}^{160}} {{t}^{22}}+15 {{s}^{156}} {{t}^{22}}+9 {{s}^{148}} {{t}^{22}}+6 {{s}^{144}} {{t}^{22}}+6 {{s}^{136}} {{t}^{22}}\\
  +&9 {{s}^{238}} {{t}^{21}}+{{s}^{234}}{{t}^{21}}+{{s}^{230}} {{t}^{21}}+18 {{s}^{226}} {{t}^{21}}+10 {{s}^{222}} {{t}^{21}}+19 {{s}^{218}} {{t}^{21}}+36 {{s}^{214}} {{t}^{21}}\\
  +&20 {{s}^{210}} {{t}^{21}}+37 {{s}^{206}} {{t}^{21}}+72 {{s}^{202}} {{t}^{21}}+48 {{s}^{198}} {{t}^{21}}+63 {{s}^{194}} {{t}^{21}}+99 {{s}^{190}} {{t}^{21}}+65 {{s}^{186}} {{t}^{21}}\\
  +&98 {{s}^{182}} {{t}^{21}}+135 {{s}^{178}} {{t}^{21}}+73 {{s}^{174}} {{t}^{21}}+115 {{s}^{170}} {{t}^{21}}+144{{s}^{166}} {{t}^{21}}+72 {{s}^{162}} {{t}^{21}}+133 {{s}^{158}} {{t}^{21}}\\
  +&126 {{s}^{154}} {{t}^{21}}+52 {{s}^{150}} {{t}^{21}}+125 {{s}^{146}} {{t}^{21}}+90 {{s}^{142}} {{t}^{21}}+44 {{s}^{138}} {{t}^{21}}+99 {{s}^{134}} {{t}^{21}}+45 {{s}^{130}} {{t}^{21}}\\
  +&27 {{s}^{126}} {{t}^{21}}+64 {{s}^{122}} {{t}^{21}}+18 {{s}^{118}} {{t}^{21}}+18 {{s}^{114}} {{t}^{21}}+28 {{s}^{110}} {{t}^{21}}+10 {{s}^{102}} {{t}^{21}}+10 {{s}^{98}}{{t}^{21}}\\
  +&6 {{s}^{240}} {{t}^{20}}+9 {{s}^{236}} {{t}^{20}}+12 {{s}^{228}} {{t}^{20}}+36 {{s}^{224}} {{t}^{20}}+15 {{s}^{220}} {{t}^{20}}+57 {{s}^{216}} {{t}^{20}}+78 {{s}^{212}} {{t}^{20}}\\
  +&42 {{s}^{208}} {{t}^{20}}+144 {{s}^{204}} {{t}^{20}}+168 {{s}^{200}} {{t}^{20}}+117 {{s}^{196}} {{t}^{20}}+267 {{s}^{192}} {{t}^{20}}+258 {{s}^{188}} {{t}^{20}}\\
  +&234 {{s}^{184}} {{t}^{20}}+426 {{s}^{180}} {{t}^{20}}+378 {{s}^{176}}{{t}^{20}}+351 {{s}^{172}} {{t}^{20}}+558 {{s}^{168}} {{t}^{20}}+471 {{s}^{164}} {{t}^{20}}\\
  +&465 {{s}^{160}} {{t}^{20}}+642 {{s}^{156}} {{t}^{20}}+507 {{s}^{152}} {{t}^{20}}+480 {{s}^{148}} {{t}^{20}}+660 {{s}^{144}} {{t}^{20}}+480 {{s}^{140}} {{t}^{20}}\\
  +&456 {{s}^{136}} {{t}^{20}}+582 {{s}^{132}} {{t}^{20}}+360 {{s}^{128}} {{t}^{20}}+369 {{s}^{124}} {{t}^{20}}+468 {{s}^{120}} {{t}^{20}}+231 {{s}^{116}} {{t}^{20}}\\
  +&267 {{s}^{112}} {{t}^{20}}+294 {{s}^{108}} {{t}^{20}}+111 {{s}^{104}} {{t}^{20}}+183 {{s}^{100}} {{t}^{20}}+165 {{s}^{96}} {{t}^{20}}+33 {{s}^{92}} {{t}^{20}}+96 {{s}^{88}} {{t}^{20}}\\
  +&60 {{s}^{84}} {{t}^{20}}+15 {{s}^{80}} {{t}^{20}}+45 {{s}^{76}} {{t}^{20}}+15 {{s}^{72}} {{t}^{20}}+15 {{s}^{64}} {{t}^{20}}+18 {{s}^{238}} {{t}^{19}}+3 {{s}^{234}} {{t}^{19}}\\
  +&3 {{s}^{230}} {{t}^{19}}+48 {{s}^{226}} {{t}^{19}}+33 {{s}^{222}} {{t}^{19}}+54 {{s}^{218}} {{t}^{19}}+135 {{s}^{214}} {{t}^{19}}+105 {{s}^{210}} {{t}^{19}}+162 {{s}^{206}} {{t}^{19}}\\
  +&342 {{s}^{202}} {{t}^{19}}+297 {{s}^{198}} {{t}^{19}}+390 {{s}^{194}} {{t}^{19}}+672 {{s}^{190}} {{t}^{19}}+585 {{s}^{186}} {{t}^{19}}+783 {{s}^{182}} {{t}^{19}}\\
  +&1170 {{s}^{178}} {{t}^{19}}+960 {{s}^{174}} {{t}^{19}}+1272 {{s}^{170}} {{t}^{19}}+1683 {{s}^{166}} {{t}^{19}}+1365 {{s}^{162}} {{t}^{19}}+1812 {{s}^{158}} {{t}^{19}}\\
  +&2094 {{s}^{154}} {{t}^{19}}+1653 {{s}^{150}} {{t}^{19}}+2181 {{s}^{146}} {{t}^{19}}+2289 {{s}^{142}} {{t}^{19}}+1815 {{s}^{138}} {{t}^{19}}+2286 {{s}^{134}} {{t}^{19}}\\
  +&2163 {{s}^{130}} {{t}^{19}}+1695 {{s}^{126}} {{t}^{19}}+2100 {{s}^{122}} {{t}^{19}}+1845 {{s}^{118}} {{t}^{19}}+1386 {{s}^{114}} {{t}^{19}}+1659 {{s}^{110}} {{t}^{19}}\\
  +&1347 {{s}^{106}} {{t}^{19}}+975{{s}^{102}} {{t}^{19}}+1191 {{s}^{98}} {{t}^{19}}+849 {{s}^{94}} {{t}^{19}}+546 {{s}^{90}} {{t}^{19}}+735 {{s}^{86}} {{t}^{19}}\\
  +&447 {{s}^{82}} {{t}^{19}}+282 {{s}^{78}} {{t}^{19}}+402 {{s}^{74}} {{t}^{19}}+165 {{s}^{70}} {{t}^{19}}+114 {{s}^{66}} {{t}^{19}}+195 {{s}^{62}} {{t}^{19}}+45 {{s}^{58}} {{t}^{19}}\\
  +&45 {{s}^{54}} {{t}^{19}}+66 {{s}^{50}} {{t}^{19}}+21 {{s}^{42}} {{t}^{19}}+21 {{s}^{38}} {{t}^{19}}+10 {{s}^{240}} {{t}^{18}}+18 {{s}^{236}} {{t}^{18}}+27 {{s}^{228}} {{t}^{18}}\\
  +&82 {{s}^{224}} {{t}^{18}}+36 {{s}^{220}} {{t}^{18}}+146 {{s}^{216}} {{t}^{18}}+236 {{s}^{212}} {{t}^{18}}+154 {{s}^{208}} {{t}^{18}}+463 {{s}^{204}} {{t}^{18}}+625 {{s}^{200}} {{t}^{18}}\\
  +&532 {{s}^{196}} {{t}^{18}}+1105 {{s}^{192}} {{t}^{18}}+1300 {{s}^{188}} {{t}^{18}}+1297 {{s}^{184}} {{t}^{18}}+2203 {{s}^{180}} {{t}^{18}}+2413 {{s}^{176}} {{t}^{18}}\\
  +&2429{{s}^{172}} {{t}^{18}}+3642 {{s}^{168}} {{t}^{18}}+3786 {{s}^{164}} {{t}^{18}}+3867 {{s}^{160}} {{t}^{18}}+5249 {{s}^{156}} {{t}^{18}}+5141 {{s}^{152}} {{t}^{18}}\\
  +&5118 {{s}^{148}} {{t}^{18}}+6623 {{s}^{144}} {{t}^{18}}+6094 {{s}^{140}} {{t}^{18}}+6010 {{s}^{136}} {{t}^{18}}+7288 {{s}^{132}} {{t}^{18}}+6240 {{s}^{128}} {{t}^{18}}\\
  +&6157 {{s}^{124}} {{t}^{18}}+7127 {{s}^{120}} {{t}^{18}}+5633{{s}^{116}} {{t}^{18}}+5565 {{s}^{112}} {{t}^{18}}+6020 {{s}^{108}} {{t}^{18}}+4458 {{s}^{104}} {{t}^{18}}\\
  +&4459 {{s}^{100}} {{t}^{18}}+4501 {{s}^{96}} {{t}^{18}}+3029 {{s}^{92}} {{t}^{18}}+3071 {{s}^{88}} {{t}^{18}}+2908 {{s}^{84}} {{t}^{18}}+1831 {{s}^{80}} {{t}^{18}}\\
  +&1836 {{s}^{76}} {{t}^{18}}+1629 {{s}^{72}} {{t}^{18}}+866 {{s}^{68}} {{t}^{18}}+952 {{s}^{64}} {{t}^{18}}+820 {{s}^{60}} {{t}^{18}}+342{{s}^{56}} {{t}^{18}}+412 {{s}^{52}} {{t}^{18}}\\
  +&333 {{s}^{48}} {{t}^{18}}+80 {{s}^{44}} {{t}^{18}}+188 {{s}^{40}} {{t}^{18}}+126 {{s}^{36}} {{t}^{18}}+63 {{s}^{28}} {{t}^{18}}+28 {{s}^{24}} {{t}^{18}}+28 {{s}^{16}} {{t}^{18}}\\
  +&30 {{s}^{238}} {{t}^{17}}+6 {{s}^{234}} {{t}^{17}}+6 {{s}^{230}} {{t}^{17}}+93 {{s}^{226}} {{t}^{17}}+69 {{s}^{222}} {{t}^{17}}+108 {{s}^{218}} {{t}^{17}}+312 {{s}^{214}} {{t}^{17}}\\
  +&276 {{s}^{210}} {{t}^{17}}+417 {{s}^{206}} {{t}^{17}}+948 {{s}^{202}} {{t}^{17}}+933 {{s}^{198}} {{t}^{17}}+1266 {{s}^{194}} {{t}^{17}}+2283 {{s}^{190}} {{t}^{17}}\\
  +&2271 {{s}^{186}} {{t}^{17}}+3069 {{s}^{182}} {{t}^{17}}+4722 {{s}^{178}} {{t}^{17}}+4548 {{s}^{174}} {{t}^{17}}+5952 {{s}^{170}} {{t}^{17}}+8097 {{s}^{166}} {{t}^{17}}\\
  +&7722 {{s}^{162}} {{t}^{17}}+9807 {{s}^{158}} {{t}^{17}}+11955 {{s}^{154}} {{t}^{17}}+11142{{s}^{150}} {{t}^{17}}+13680 {{s}^{146}} {{t}^{17}}\\
  +&15411 {{s}^{142}} {{t}^{17}}+14112 {{s}^{138}} {{t}^{17}}+16569 {{s}^{134}} {{t}^{17}}+17355 {{s}^{130}} {{t}^{17}}+15492 {{s}^{126}} {{t}^{17}}\\
  +&17616 {{s}^{122}} {{t}^{17}}+17364 {{s}^{118}} {{t}^{17}}+14901 {{s}^{114}} {{t}^{17}}+16407 {{s}^{110}} {{t}^{17}}+15207 {{s}^{106}} {{t}^{17}}\\
  +&12564 {{s}^{102}} {{t}^{17}}+13602 {{s}^{98}}{{t}^{17}}+11667 {{s}^{94}} {{t}^{17}}+9150 {{s}^{90}} {{t}^{17}}+9867 {{s}^{86}} {{t}^{17}}+7758 {{s}^{82}} {{t}^{17}}\\
  +&5859 {{s}^{78}} {{t}^{17}}+6282 {{s}^{74}} {{t}^{17}}+4338 {{s}^{70}} {{t}^{17}}+3189 {{s}^{66}} {{t}^{17}}+3480 {{s}^{62}} {{t}^{17}}+2085 {{s}^{58}} {{t}^{17}}\\
  +&1497 {{s}^{54}} {{t}^{17}}+1611 {{s}^{50}} {{t}^{17}}+804 {{s}^{46}} {{t}^{17}}+597 {{s}^{42}} {{t}^{17}}+684 {{s}^{38}} {{t}^{17}}+267 {{s}^{34}} {{t}^{17}}+168 {{s}^{30}} {{t}^{17}}\\
  +&237 {{s}^{26}} {{t}^{17}}+84 {{s}^{22}} {{t}^{17}}+48 {{s}^{18}} {{t}^{17}}+84 {{s}^{14}} {{t}^{17}}+36 {{s}^{2}} {{t}^{17}}+15 {{s}^{240}} {{t}^{16}}+30 {{s}^{236}} {{t}^{16}}\\
  +&48 {{s}^{228}} {{t}^{16}}+147 {{s}^{224}} {{t}^{16}}+66 {{s}^{220}} {{t}^{16}}+279 {{s}^{216}} {{t}^{16}}+492 {{s}^{212}} {{t}^{16}}+348 {{s}^{208}} {{t}^{16}}\\
  +&1038 {{s}^{204}} {{t}^{16}}+1530{{s}^{200}} {{t}^{16}}+1434 {{s}^{196}} {{t}^{16}}+2934 {{s}^{192}} {{t}^{16}}+3804 {{s}^{188}} {{t}^{16}}+4068 {{s}^{184}} {{t}^{16}}\\
  +&6810 {{s}^{180}} {{t}^{16}}+8169 {{s}^{176}} {{t}^{16}}+8784 {{s}^{172}} {{t}^{16}}+12972 {{s}^{168}} {{t}^{16}}+14664 {{s}^{164}} {{t}^{16}}+15714 {{s}^{160}} {{t}^{16}}\\
  +&21117 {{s}^{156}} {{t}^{16}}+22440 {{s}^{152}} {{t}^{16}}+23460 {{s}^{148}} {{t}^{16}}+29685 {{s}^{144}} {{t}^{16}}+29739 {{s}^{140}} {{t}^{16}}\\
  +&30468 {{s}^{136}} {{t}^{16}}+36186 {{s}^{132}} {{t}^{16}}+34206 {{s}^{128}} {{t}^{16}}+34440 {{s}^{124}} {{t}^{16}}+38856 {{s}^{120}} {{t}^{16}}\\
  +&34563 {{s}^{116}} {{t}^{16}}+34164 {{s}^{112}} {{t}^{16}}+36366 {{s}^{108}} {{t}^{16}}+30672 {{s}^{104}} {{t}^{16}}+29805 {{s}^{100}} {{t}^{16}}\\
  +&30036 {{s}^{96}} {{t}^{16}}+23724{{s}^{92}} {{t}^{16}}+22620 {{s}^{88}} {{t}^{16}}+21600 {{s}^{84}} {{t}^{16}}+16017 {{s}^{80}} {{t}^{16}}+14952 {{s}^{76}} {{t}^{16}}\\
  +&13536 {{s}^{72}} {{t}^{16}}+9072 {{s}^{68}} {{t}^{16}}+8511 {{s}^{64}} {{t}^{16}}+7347 {{s}^{60}} {{t}^{16}}+4320 {{s}^{56}} {{t}^{16}}+4134 {{s}^{52}} {{t}^{16}}\\
  +&3354 {{s}^{48}} {{t}^{16}}+1599 {{s}^{44}} {{t}^{16}}+1806 {{s}^{40}} {{t}^{16}}+1308 {{s}^{36}} {{t}^{16}}+426{{s}^{32}} {{t}^{16}}+654 {{s}^{28}} {{t}^{16}}\\
  +&417 {{s}^{24}} {{t}^{16}}+105 {{s}^{20}} {{t}^{16}}+228 {{s}^{16}} {{t}^{16}}+84 {{s}^{12}} {{t}^{16}}+63 {{s}^{4}} {{t}^{16}}+45 {{t}^{16}}+45 {{s}^{238}} {{t}^{15}}\\
  +&10 {{s}^{234}} {{t}^{15}}+10 {{s}^{230}} {{t}^{15}}+153 {{s}^{226}} {{t}^{15}}+118 {{s}^{222}} {{t}^{15}}+181 {{s}^{218}} {{t}^{15}}+567 {{s}^{214}} {{t}^{15}}\\
  +&534 {{s}^{210}} {{t}^{15}}+813 {{s}^{206}} {{t}^{15}}+1954 {{s}^{202}} {{t}^{15}}+2068 {{s}^{198}} {{t}^{15}}+2893 {{s}^{194}} {{t}^{15}}+5366 {{s}^{190}} {{t}^{15}}\\
  +&5769 {{s}^{186}} {{t}^{15}}+7942 {{s}^{182}} {{t}^{15}}+12395 {{s}^{178}} {{t}^{15}}+12960 {{s}^{174}} {{t}^{15}}+17065 {{s}^{170}} {{t}^{15}}+23490 {{s}^{166}} {{t}^{15}}\\
  +&24137 {{s}^{162}} {{t}^{15}}+30462 {{s}^{158}} {{t}^{15}}+37806 {{s}^{154}} {{t}^{15}}+37766 {{s}^{150}}{{t}^{15}}+45668 {{s}^{146}} {{t}^{15}}\\
  +&52502 {{s}^{142}} {{t}^{15}}+51040 {{s}^{138}} {{t}^{15}}+58941 {{s}^{134}} {{t}^{15}}+63320 {{s}^{130}} {{t}^{15}}+59625 {{s}^{126}} {{t}^{15}}\\
  +&66314 {{s}^{122}} {{t}^{15}}+67130 {{s}^{118}} {{t}^{15}}+60786 {{s}^{114}} {{t}^{15}}+65202 {{s}^{110}} {{t}^{15}}+62210 {{s}^{106}} {{t}^{15}}\\
  +&54164 {{s}^{102}} {{t}^{15}}+56459 {{s}^{98}} {{t}^{15}}+50368 {{s}^{94}} {{t}^{15}}+41870 {{s}^{90}} {{t}^{15}}+42637 {{s}^{86}} {{t}^{15}}+35301 {{s}^{82}} {{t}^{15}}\\
  +&28137 {{s}^{78}} {{t}^{15}}+27986 {{s}^{74}} {{t}^{15}}+20998 {{s}^{70}} {{t}^{15}}+16058 {{s}^{66}} {{t}^{15}}+15771 {{s}^{62}} {{t}^{15}}+10565 {{s}^{58}} {{t}^{15}}\\
  +&7724 {{s}^{54}} {{t}^{15}}+7435 {{s}^{50}} {{t}^{15}}+4282 {{s}^{46}} {{t}^{15}}+3061 {{s}^{42}} {{t}^{15}}+3033{{s}^{38}} {{t}^{15}}+1394 {{s}^{34}} {{t}^{15}}\\
  +&910 {{s}^{30}} {{t}^{15}}+1017 {{s}^{26}} {{t}^{15}}+378 {{s}^{22}} {{t}^{15}}+235 {{s}^{18}} {{t}^{15}}+315 {{s}^{14}} {{t}^{15}}+28 {{s}^{10}} {{t}^{15}}+28 {{s}^{6}} {{t}^{15}}\\
  +&108 {{s}^{2}} {{t}^{15}}+21 {{s}^{240}} {{t}^{14}}+45 {{s}^{236}} {{t}^{14}}+75 {{s}^{228}} {{t}^{14}}+231 {{s}^{224}} {{t}^{14}}+105 {{s}^{220}} {{t}^{14}}+456 {{s}^{216}} {{t}^{14}}\\
  +&846 {{s}^{212}}{{t}^{14}}+624 {{s}^{208}} {{t}^{14}}+1881 {{s}^{204}} {{t}^{14}}+2910 {{s}^{200}} {{t}^{14}}+2901 {{s}^{196}} {{t}^{14}}+5889 {{s}^{192}} {{t}^{14}}\\
  +&8007 {{s}^{188}} {{t}^{14}}+8958 {{s}^{184}} {{t}^{14}}+14859 {{s}^{180}} {{t}^{14}}+18549 {{s}^{176}} {{t}^{14}}+20703 {{s}^{172}} {{t}^{14}}+30255 {{s}^{168}} {{t}^{14}}\\
  +&35313 {{s}^{164}} {{t}^{14}}+38910 {{s}^{160}} {{t}^{14}}+51753{{s}^{156}} {{t}^{14}}+56637 {{s}^{152}} {{t}^{14}}+60597 {{s}^{148}} {{t}^{14}}+75504 {{s}^{144}} {{t}^{14}}\\
  +&77859 {{s}^{140}} {{t}^{14}}+81138 {{s}^{136}} {{t}^{14}}+94926 {{s}^{132}} {{t}^{14}}+92478 {{s}^{128}} {{t}^{14}}+93978 {{s}^{124}} {{t}^{14}}+104205 {{s}^{120}} {{t}^{14}}\\
  +&95856 {{s}^{116}} {{t}^{14}}+94959 {{s}^{112}} {{t}^{14}}+99522 {{s}^{108}} {{t}^{14}}+86796 {{s}^{104}} {{t}^{14}}+83820 {{s}^{100}} {{t}^{14}}+83178 {{s}^{96}} {{t}^{14}}\\
  +&68169 {{s}^{92}} {{t}^{14}}+64131 {{s}^{88}} {{t}^{14}}+60213 {{s}^{84}} {{t}^{14}}+46287 {{s}^{80}} {{t}^{14}}+42306 {{s}^{76}} {{t}^{14}}+37458 {{s}^{72}} {{t}^{14}}\\
  +&26280 {{s}^{68}} {{t}^{14}}+23745 {{s}^{64}} {{t}^{14}}+19803 {{s}^{60}} {{t}^{14}}+12315 {{s}^{56}} {{t}^{14}}+11103 {{s}^{52}} {{t}^{14}}+8568 {{s}^{48}}{{t}^{14}}\\
  +&4425 {{s}^{44}} {{t}^{14}}+4431 {{s}^{40}} {{t}^{14}}+3033 {{s}^{36}} {{t}^{14}}+1113 {{s}^{32}} {{t}^{14}}+1413 {{s}^{28}} {{t}^{14}}+822 {{s}^{24}} {{t}^{14}}+210 {{s}^{20}} {{t}^{14}}\\
  +&420 {{s}^{16}} {{t}^{14}}+147 {{s}^{12}} {{t}^{14}}+84 {{s}^{4}} {{t}^{14}}+36 {{t}^{14}}+63 {{s}^{238}} {{t}^{13}}+15 {{s}^{234}} {{t}^{13}}+15 {{s}^{230}} {{t}^{13}}+228 {{s}^{226}} {{t}^{13}}\\
  +&180 {{s}^{222}} {{t}^{13}}+273 {{s}^{218}} {{t}^{13}}+900 {{s}^{214}} {{t}^{13}}+879 {{s}^{210}} {{t}^{13}}+1350 {{s}^{206}} {{t}^{13}}+3291 {{s}^{202}} {{t}^{13}}+3627 {{s}^{198}} {{t}^{13}}\\
  +&5172 {{s}^{194}} {{t}^{13}}+9585 {{s}^{190}} {{t}^{13}}+10677 {{s}^{186}} {{t}^{13}}+14790 {{s}^{182}} {{t}^{13}}+22941 {{s}^{178}} {{t}^{13}}+24789 {{s}^{174}} {{t}^{13}}\\
  +&32637 {{s}^{170}} {{t}^{13}}+44601 {{s}^{166}} {{t}^{13}}+47016 {{s}^{162}} {{t}^{13}}+58998 {{s}^{158}} {{t}^{13}}+72891 {{s}^{154}} {{t}^{13}}+74361 {{s}^{150}} {{t}^{13}}\\
  +&89097 {{s}^{146}} {{t}^{13}}+102006 {{s}^{142}} {{t}^{13}}+100806 {{s}^{138}} {{t}^{13}}+115218 {{s}^{134}} {{t}^{13}}+123306 {{s}^{130}} {{t}^{13}}\\
  +&117681 {{s}^{126}} {{t}^{13}}+129222 {{s}^{122}} {{t}^{13}}+130233 {{s}^{118}} {{t}^{13}}+119397 {{s}^{114}}{{t}^{13}}+126081 {{s}^{110}} {{t}^{13}}\\
  +&119685 {{s}^{106}} {{t}^{13}}+105360 {{s}^{102}} {{t}^{13}}+107586 {{s}^{98}} {{t}^{13}}+95511 {{s}^{94}} {{t}^{13}}+80226 {{s}^{90}} {{t}^{13}}+79485 {{s}^{86}} {{t}^{13}}\\
  +&65478 {{s}^{82}} {{t}^{13}}+52560 {{s}^{78}} {{t}^{13}}+50451 {{s}^{74}} {{t}^{13}}+37692 {{s}^{70}} {{t}^{13}}+28869 {{s}^{66}} {{t}^{13}}+27063 {{s}^{62}} {{t}^{13}}\\
  +&17922{{s}^{58}} {{t}^{13}}+13047 {{s}^{54}} {{t}^{13}}+11871 {{s}^{50}} {{t}^{13}}+6645 {{s}^{46}} {{t}^{13}}+4692 {{s}^{42}} {{t}^{13}}+4302 {{s}^{38}} {{t}^{13}}\\
  +&1857 {{s}^{34}} {{t}^{13}}+1206 {{s}^{30}} {{t}^{13}}+1227 {{s}^{26}} {{t}^{13}}+384 {{s}^{22}} {{t}^{13}}+255 {{s}^{18}} {{t}^{13}}+318 {{s}^{14}} {{t}^{13}}+21 {{s}^{10}} {{t}^{13}}\\
  +&21 {{s}^{6}} {{t}^{13}}+84 {{s}^{2}} {{t}^{13}}+28 {{s}^{240}} {{t}^{12}}+63{{s}^{236}} {{t}^{12}}+108 {{s}^{228}} {{t}^{12}}+334 {{s}^{224}} {{t}^{12}}+153 {{s}^{220}} {{t}^{12}}\\
  +&677 {{s}^{216}} {{t}^{12}}+1270 {{s}^{212}} {{t}^{12}}+982 {{s}^{208}} {{t}^{12}}+2856 {{s}^{204}} {{t}^{12}}+4430 {{s}^{200}} {{t}^{12}}+4555 {{s}^{196}} {{t}^{12}}\\
  +&9033 {{s}^{192}} {{t}^{12}}+12278 {{s}^{188}} {{t}^{12}}+13898 {{s}^{184}} {{t}^{12}}+22624 {{s}^{180}} {{t}^{12}}+28162 {{s}^{176}}{{t}^{12}}+31691 {{s}^{172}} {{t}^{12}}\\
  +&45564 {{s}^{168}} {{t}^{12}}+52902 {{s}^{164}} {{t}^{12}}+58519 {{s}^{160}} {{t}^{12}}+76658 {{s}^{156}} {{t}^{12}}+83482 {{s}^{152}} {{t}^{12}}+89505 {{s}^{148}} {{t}^{12}}\\
  +&109812 {{s}^{144}} {{t}^{12}}+112731 {{s}^{140}} {{t}^{12}}+117448 {{s}^{136}} {{t}^{12}}+135298 {{s}^{132}} {{t}^{12}}+131270 {{s}^{128}} {{t}^{12}}\\
  +&133116{{s}^{124}} {{t}^{12}}+145262 {{s}^{120}} {{t}^{12}}+133116 {{s}^{116}} {{t}^{12}}+131270 {{s}^{112}} {{t}^{12}}+135298 {{s}^{108}} {{t}^{12}}\\
  +&117448 {{s}^{104}} {{t}^{12}}+112731 {{s}^{100}} {{t}^{12}}+109812 {{s}^{96}} {{t}^{12}}+89505 {{s}^{92}} {{t}^{12}}+83482 {{s}^{88}} {{t}^{12}}+76658 {{s}^{84}} {{t}^{12}}\\
  +&58519 {{s}^{80}} {{t}^{12}}+52902 {{s}^{76}} {{t}^{12}}+45564 {{s}^{72}} {{t}^{12}}+31691 {{s}^{68}} {{t}^{12}}+28162 {{s}^{64}} {{t}^{12}}+22624 {{s}^{60}} {{t}^{12}}\\
  +&13898 {{s}^{56}} {{t}^{12}}+12278 {{s}^{52}} {{t}^{12}}+9033 {{s}^{48}} {{t}^{12}}+4555 {{s}^{44}} {{t}^{12}}+4430 {{s}^{40}} {{t}^{12}}+2856 {{s}^{36}} {{t}^{12}}\\
  +&982 {{s}^{32}} {{t}^{12}}+1270 {{s}^{28}} {{t}^{12}}+677 {{s}^{24}} {{t}^{12}}+153 {{s}^{20}} {{t}^{12}}+334 {{s}^{16}} {{t}^{12}}+108 {{s}^{12}} {{t}^{12}}+63 {{s}^{4}} {{t}^{12}}\\
  +&28 {{t}^{12}}+84 {{s}^{238}} {{t}^{11}}+21 {{s}^{234}} {{t}^{11}}+21 {{s}^{230}} {{t}^{11}}+318 {{s}^{226}} {{t}^{11}}+255 {{s}^{222}} {{t}^{11}}+384 {{s}^{218}} {{t}^{11}}\\
  +&1227 {{s}^{214}} {{t}^{11}}+1206 {{s}^{210}} {{t}^{11}}+1857 {{s}^{206}} {{t}^{11}}+4302 {{s}^{202}} {{t}^{11}}+4692 {{s}^{198}} {{t}^{11}}+6645 {{s}^{194}} {{t}^{11}}\\
  +&11871 {{s}^{190}} {{t}^{11}}+13047{{s}^{186}} {{t}^{11}}+17922 {{s}^{182}} {{t}^{11}}+27063 {{s}^{178}} {{t}^{11}}+28869 {{s}^{174}} {{t}^{11}}+37692 {{s}^{170}} {{t}^{11}}\\
  +&50451 {{s}^{166}} {{t}^{11}}+52560 {{s}^{162}} {{t}^{11}}+65478 {{s}^{158}} {{t}^{11}}+79485 {{s}^{154}} {{t}^{11}}+80226 {{s}^{150}} {{t}^{11}}+95511 {{s}^{146}} {{t}^{11}}\\
  +&107586 {{s}^{142}} {{t}^{11}}+105360 {{s}^{138}} {{t}^{11}}+119685{{s}^{134}} {{t}^{11}}+126081 {{s}^{130}} {{t}^{11}}+119397 {{s}^{126}} {{t}^{11}}\\
  +&130233 {{s}^{122}} {{t}^{11}}+129222 {{s}^{118}} {{t}^{11}}+117681 {{s}^{114}} {{t}^{11}}+123306 {{s}^{110}} {{t}^{11}}+115218 {{s}^{106}} {{t}^{11}}\\
  +&100806 {{s}^{102}} {{t}^{11}}+102006 {{s}^{98}} {{t}^{11}}+89097 {{s}^{94}} {{t}^{11}}+74361 {{s}^{90}} {{t}^{11}}+72891 {{s}^{86}} {{t}^{11}}+58998{{s}^{82}} {{t}^{11}}\\
  +&47016 {{s}^{78}} {{t}^{11}}+44601 {{s}^{74}} {{t}^{11}}+32637 {{s}^{70}} {{t}^{11}}+24789 {{s}^{66}} {{t}^{11}}+22941 {{s}^{62}} {{t}^{11}}+14790 {{s}^{58}} {{t}^{11}}\\
  +&10677 {{s}^{54}} {{t}^{11}}+9585 {{s}^{50}} {{t}^{11}}+5172 {{s}^{46}} {{t}^{11}}+3627 {{s}^{42}} {{t}^{11}}+3291 {{s}^{38}} {{t}^{11}}+1350 {{s}^{34}} {{t}^{11}}+879 {{s}^{30}} {{t}^{11}}\\
  +&900 {{s}^{26}} {{t}^{11}}+273 {{s}^{22}} {{t}^{11}}+180 {{s}^{18}} {{t}^{11}}+228 {{s}^{14}} {{t}^{11}}+15 {{s}^{10}} {{t}^{11}}+15 {{s}^{6}} {{t}^{11}}+63 {{s}^{2}} {{t}^{11}}+36 {{s}^{240}} {{t}^{10}}\\
  +&84 {{s}^{236}} {{t}^{10}}+147 {{s}^{228}} {{t}^{10}}+420 {{s}^{224}} {{t}^{10}}+210 {{s}^{220}} {{t}^{10}}+822 {{s}^{216}} {{t}^{10}}+1413 {{s}^{212}} {{t}^{10}}+1113 {{s}^{208}} {{t}^{10}}\\
  +&3033 {{s}^{204}} {{t}^{10}}+4431 {{s}^{200}} {{t}^{10}}+4425 {{s}^{196}} {{t}^{10}}+8568 {{s}^{192}} {{t}^{10}}+11103 {{s}^{188}} {{t}^{10}}+12315 {{s}^{184}} {{t}^{10}}\\
  +&19803 {{s}^{180}} {{t}^{10}}+23745 {{s}^{176}} {{t}^{10}}+26280 {{s}^{172}} {{t}^{10}}+37458 {{s}^{168}} {{t}^{10}}+42306 {{s}^{164}} {{t}^{10}}+46287 {{s}^{160}} {{t}^{10}}\\
  +&60213 {{s}^{156}} {{t}^{10}}+64131 {{s}^{152}} {{t}^{10}}+68169 {{s}^{148}} {{t}^{10}}+83178{{s}^{144}} {{t}^{10}}+83820 {{s}^{140}} {{t}^{10}}+86796 {{s}^{136}} {{t}^{10}}\\
  +&99522 {{s}^{132}} {{t}^{10}}+94959 {{s}^{128}} {{t}^{10}}+95856 {{s}^{124}} {{t}^{10}}+104205 {{s}^{120}} {{t}^{10}}+93978 {{s}^{116}} {{t}^{10}}+92478 {{s}^{112}} {{t}^{10}}\\
  +&94926 {{s}^{108}} {{t}^{10}}+81138 {{s}^{104}} {{t}^{10}}+77859 {{s}^{100}} {{t}^{10}}+75504 {{s}^{96}} {{t}^{10}}+60597 {{s}^{92}}{{t}^{10}}+56637 {{s}^{88}} {{t}^{10}}\\
  +&51753 {{s}^{84}} {{t}^{10}}+38910 {{s}^{80}} {{t}^{10}}+35313 {{s}^{76}} {{t}^{10}}+30255 {{s}^{72}} {{t}^{10}}+20703 {{s}^{68}} {{t}^{10}}+18549 {{s}^{64}} {{t}^{10}}\\
  +&14859 {{s}^{60}} {{t}^{10}}+8958 {{s}^{56}} {{t}^{10}}+8007 {{s}^{52}} {{t}^{10}}+5889 {{s}^{48}} {{t}^{10}}+2901 {{s}^{44}} {{t}^{10}}+2910 {{s}^{40}} {{t}^{10}}+1881 {{s}^{36}} {{t}^{10}}\\
  +&624{{s}^{32}} {{t}^{10}}+846 {{s}^{28}} {{t}^{10}}+456 {{s}^{24}} {{t}^{10}}+105 {{s}^{20}} {{t}^{10}}+231 {{s}^{16}} {{t}^{10}}+75 {{s}^{12}} {{t}^{10}}+45 {{s}^{4}} {{t}^{10}}+21 {{t}^{10}}\\
  +&108 {{s}^{238}} {{t}^{9}}+28 {{s}^{234}} {{t}^{9}}+28 {{s}^{230}} {{t}^{9}}+315 {{s}^{226}} {{t}^{9}}+235 {{s}^{222}} {{t}^{9}}+378 {{s}^{218}} {{t}^{9}}+1017 {{s}^{214}} {{t}^{9}}\\
  +&910 {{s}^{210}} {{t}^{9}}+1394 {{s}^{206}} {{t}^{9}}+3033{{s}^{202}} {{t}^{9}}+3061 {{s}^{198}} {{t}^{9}}+4282 {{s}^{194}} {{t}^{9}}+7435 {{s}^{190}} {{t}^{9}}+7724 {{s}^{186}} {{t}^{9}}\\
  +&10565 {{s}^{182}} {{t}^{9}}+15771 {{s}^{178}} {{t}^{9}}+16058 {{s}^{174}} {{t}^{9}}+20998 {{s}^{170}} {{t}^{9}}+27986 {{s}^{166}} {{t}^{9}}+28137 {{s}^{162}} {{t}^{9}}\\
  +&35301 {{s}^{158}} {{t}^{9}}+42637 {{s}^{154}} {{t}^{9}}+41870 {{s}^{150}} {{t}^{9}}+50368 {{s}^{146}}{{t}^{9}}+56459 {{s}^{142}} {{t}^{9}}+54164 {{s}^{138}} {{t}^{9}}\\
  +&62210 {{s}^{134}} {{t}^{9}}+65202 {{s}^{130}} {{t}^{9}}+60786 {{s}^{126}} {{t}^{9}}+67130 {{s}^{122}} {{t}^{9}}+66314 {{s}^{118}} {{t}^{9}}+59625 {{s}^{114}} {{t}^{9}}\\
  +&63320 {{s}^{110}} {{t}^{9}}+58941 {{s}^{106}} {{t}^{9}}+51040 {{s}^{102}} {{t}^{9}}+52502 {{s}^{98}} {{t}^{9}}+45668 {{s}^{94}} {{t}^{9}}+37766 {{s}^{90}} {{t}^{9}}\\
  +&37806 {{s}^{86}} {{t}^{9}}+30462 {{s}^{82}} {{t}^{9}}+24137 {{s}^{78}} {{t}^{9}}+23490 {{s}^{74}} {{t}^{9}}+17065 {{s}^{70}} {{t}^{9}}+12960 {{s}^{66}} {{t}^{9}}\\
  +&12395 {{s}^{62}} {{t}^{9}}+7942 {{s}^{58}} {{t}^{9}}+5769 {{s}^{54}} {{t}^{9}}+5366 {{s}^{50}} {{t}^{9}}+2893 {{s}^{46}} {{t}^{9}}+2068 {{s}^{42}} {{t}^{9}}+1954 {{s}^{38}} {{t}^{9}}\\
  +&813 {{s}^{34}} {{t}^{9}}+534 {{s}^{30}} {{t}^{9}}+567 {{s}^{26}} {{t}^{9}}+181 {{s}^{22}} {{t}^{9}}+118 {{s}^{18}} {{t}^{9}}+153 {{s}^{14}} {{t}^{9}}+10 {{s}^{10}} {{t}^{9}}+10 {{s}^{6}} {{t}^{9}}\\
  +&45 {{s}^{2}} {{t}^{9}}+45 {{s}^{240}} {{t}^{8}}+63 {{s}^{236}} {{t}^{8}}+84 {{s}^{228}} {{t}^{8}}+228 {{s}^{224}} {{t}^{8}}+105 {{s}^{220}} {{t}^{8}}+417 {{s}^{216}} {{t}^{8}}+654 {{s}^{212}} {{t}^{8}}\\
  +&426 {{s}^{208}} {{t}^{8}}+1308 {{s}^{204}} {{t}^{8}}+1806 {{s}^{200}} {{t}^{8}}+1599 {{s}^{196}} {{t}^{8}}+3354{{s}^{192}} {{t}^{8}}+4134 {{s}^{188}} {{t}^{8}}+4320 {{s}^{184}} {{t}^{8}}\\
  +&7347 {{s}^{180}} {{t}^{8}}+8511 {{s}^{176}} {{t}^{8}}+9072 {{s}^{172}} {{t}^{8}}+13536 {{s}^{168}} {{t}^{8}}+14952 {{s}^{164}} {{t}^{8}}+16017 {{s}^{160}} {{t}^{8}}\\
  +&21600 {{s}^{156}} {{t}^{8}}+22620 {{s}^{152}} {{t}^{8}}+23724 {{s}^{148}} {{t}^{8}}+30036 {{s}^{144}} {{t}^{8}}+29805 {{s}^{140}} {{t}^{8}}+30672 {{s}^{136}} {{t}^{8}}\\
  +&36366 {{s}^{132}} {{t}^{8}}+34164 {{s}^{128}} {{t}^{8}}+34563 {{s}^{124}} {{t}^{8}}+38856 {{s}^{120}} {{t}^{8}}+34440 {{s}^{116}} {{t}^{8}}+34206 {{s}^{112}} {{t}^{8}}\\
  +&36186 {{s}^{108}} {{t}^{8}}+30468 {{s}^{104}} {{t}^{8}}+29739 {{s}^{100}} {{t}^{8}}+29685 {{s}^{96}} {{t}^{8}}+23460 {{s}^{92}} {{t}^{8}}+22440 {{s}^{88}} {{t}^{8}}\\
  +&21117 {{s}^{84}} {{t}^{8}}+15714 {{s}^{80}} {{t}^{8}}+14664{{s}^{76}} {{t}^{8}}+12972 {{s}^{72}} {{t}^{8}}+8784 {{s}^{68}} {{t}^{8}}+8169 {{s}^{64}} {{t}^{8}}+6810 {{s}^{60}} {{t}^{8}}\\
  +&4068 {{s}^{56}} {{t}^{8}}+3804 {{s}^{52}} {{t}^{8}}+2934 {{s}^{48}} {{t}^{8}}+1434 {{s}^{44}} {{t}^{8}}+1530 {{s}^{40}} {{t}^{8}}+1038 {{s}^{36}} {{t}^{8}}+348 {{s}^{32}} {{t}^{8}}\\
  +&492 {{s}^{28}} {{t}^{8}}+279 {{s}^{24}} {{t}^{8}}+66 {{s}^{20}} {{t}^{8}}+147 {{s}^{16}} {{t}^{8}}+48 {{s}^{12}} {{t}^{8}}+30{{s}^{4}} {{t}^{8}}+15 {{t}^{8}}+36 {{s}^{238}} {{t}^{7}}\\
  +&84 {{s}^{226}} {{t}^{7}}+48 {{s}^{222}} {{t}^{7}}+84 {{s}^{218}} {{t}^{7}}+237 {{s}^{214}} {{t}^{7}}+168 {{s}^{210}} {{t}^{7}}+267 {{s}^{206}} {{t}^{7}}+684 {{s}^{202}} {{t}^{7}}\\
  +&597 {{s}^{198}} {{t}^{7}}+804 {{s}^{194}} {{t}^{7}}+1611 {{s}^{190}} {{t}^{7}}+1497 {{s}^{186}} {{t}^{7}}+2085 {{s}^{182}} {{t}^{7}}+3480 {{s}^{178}} {{t}^{7}}+3189 {{s}^{174}} {{t}^{7}}\\
  +&4338{{s}^{170}} {{t}^{7}}+6282 {{s}^{166}} {{t}^{7}}+5859 {{s}^{162}} {{t}^{7}}+7758 {{s}^{158}} {{t}^{7}}+9867 {{s}^{154}} {{t}^{7}}+9150 {{s}^{150}} {{t}^{7}}+11667 {{s}^{146}} {{t}^{7}}\\
  +&13602 {{s}^{142}} {{t}^{7}}+12564 {{s}^{138}} {{t}^{7}}+15207 {{s}^{134}} {{t}^{7}}+16407 {{s}^{130}} {{t}^{7}}+14901 {{s}^{126}} {{t}^{7}}+17364 {{s}^{122}} {{t}^{7}}\\
  +&17616 {{s}^{118}} {{t}^{7}}+15492 {{s}^{114}} {{t}^{7}}+17355 {{s}^{110}} {{t}^{7}}+16569 {{s}^{106}} {{t}^{7}}+14112 {{s}^{102}} {{t}^{7}}+15411 {{s}^{98}} {{t}^{7}}\\
  +&13680 {{s}^{94}} {{t}^{7}}+11142 {{s}^{90}} {{t}^{7}}+11955 {{s}^{86}} {{t}^{7}}+9807 {{s}^{82}} {{t}^{7}}+7722 {{s}^{78}} {{t}^{7}}+8097 {{s}^{74}} {{t}^{7}}+5952 {{s}^{70}} {{t}^{7}}\\
  +&4548 {{s}^{66}} {{t}^{7}}+4722 {{s}^{62}} {{t}^{7}}+3069 {{s}^{58}} {{t}^{7}}+2271 {{s}^{54}} {{t}^{7}}+2283{{s}^{50}} {{t}^{7}}+1266 {{s}^{46}} {{t}^{7}}+933 {{s}^{42}} {{t}^{7}}\\
  +&948 {{s}^{38}} {{t}^{7}}+417 {{s}^{34}} {{t}^{7}}+276 {{s}^{30}} {{t}^{7}}+312 {{s}^{26}} {{t}^{7}}+108 {{s}^{22}} {{t}^{7}}+69 {{s}^{18}} {{t}^{7}}+93 {{s}^{14}} {{t}^{7}}+6 {{s}^{10}} {{t}^{7}}+6 {{s}^{6}} {{t}^{7}}\\
  +&30 {{s}^{2}} {{t}^{7}}+28 {{s}^{224}} {{t}^{6}}+28 {{s}^{216}} {{t}^{6}}+63 {{s}^{212}} {{t}^{6}}+126 {{s}^{204}} {{t}^{6}}+188 {{s}^{200}} {{t}^{6}}+80 {{s}^{196}}{{t}^{6}}+333 {{s}^{192}} {{t}^{6}}\\
  +&412 {{s}^{188}} {{t}^{6}}+342 {{s}^{184}} {{t}^{6}}+820 {{s}^{180}} {{t}^{6}}+952 {{s}^{176}} {{t}^{6}}+866 {{s}^{172}} {{t}^{6}}+1629 {{s}^{168}} {{t}^{6}}+1836 {{s}^{164}} {{t}^{6}}\\
  +&1831 {{s}^{160}} {{t}^{6}}+2908 {{s}^{156}} {{t}^{6}}+3071 {{s}^{152}} {{t}^{6}}+3029 {{s}^{148}} {{t}^{6}}+4501 {{s}^{144}} {{t}^{6}}+4459 {{s}^{140}} {{t}^{6}}+4458 {{s}^{136}} {{t}^{6}}\\
  +&6020{{s}^{132}} {{t}^{6}}+5565 {{s}^{128}} {{t}^{6}}+5633 {{s}^{124}} {{t}^{6}}+7127 {{s}^{120}} {{t}^{6}}+6157 {{s}^{116}} {{t}^{6}}+6240 {{s}^{112}} {{t}^{6}}+7288 {{s}^{108}} {{t}^{6}}\\
  +&6010 {{s}^{104}} {{t}^{6}}+6094 {{s}^{100}} {{t}^{6}}+6623 {{s}^{96}} {{t}^{6}}+5118 {{s}^{92}} {{t}^{6}}+5141 {{s}^{88}} {{t}^{6}}+5249 {{s}^{84}} {{t}^{6}}+3867 {{s}^{80}} {{t}^{6}}\\
  +&3786 {{s}^{76}} {{t}^{6}}+3642 {{s}^{72}} {{t}^{6}}+2429 {{s}^{68}} {{t}^{6}}+2413 {{s}^{64}} {{t}^{6}}+2203 {{s}^{60}} {{t}^{6}}+1297 {{s}^{56}} {{t}^{6}}+1300 {{s}^{52}} {{t}^{6}}\\
  +&1105 {{s}^{48}} {{t}^{6}}+532 {{s}^{44}} {{t}^{6}}+625 {{s}^{40}} {{t}^{6}}+463 {{s}^{36}} {{t}^{6}}+154 {{s}^{32}} {{t}^{6}}+236 {{s}^{28}} {{t}^{6}}+146 {{s}^{24}} {{t}^{6}}+36 {{s}^{20}} {{t}^{6}}\\
  +&82 {{s}^{16}} {{t}^{6}}+27 {{s}^{12}} {{t}^{6}}+18 {{s}^{4}} {{t}^{6}}+10 {{t}^{6}}+21 {{s}^{202}} {{t}^{5}}+21 {{s}^{198}} {{t}^{5}}+66 {{s}^{190}} {{t}^{5}}+45 {{s}^{186}} {{t}^{5}}+45 {{s}^{182}} {{t}^{5}}\\
  +&195 {{s}^{178}} {{t}^{5}}+114 {{s}^{174}} {{t}^{5}}+165 {{s}^{170}} {{t}^{5}}+402 {{s}^{166}} {{t}^{5}}+282 {{s}^{162}} {{t}^{5}}+447 {{s}^{158}} {{t}^{5}}+735 {{s}^{154}} {{t}^{5}}\\
  +&546 {{s}^{150}} {{t}^{5}}+849 {{s}^{146}} {{t}^{5}}+1191 {{s}^{142}} {{t}^{5}}+975 {{s}^{138}} {{t}^{5}}+1347 {{s}^{134}} {{t}^{5}}+1659{{s}^{130}} {{t}^{5}}+1386 {{s}^{126}} {{t}^{5}}\\
  +&1845 {{s}^{122}} {{t}^{5}}+2100 {{s}^{118}} {{t}^{5}}+1695 {{s}^{114}} {{t}^{5}}+2163 {{s}^{110}} {{t}^{5}}+2286 {{s}^{106}} {{t}^{5}}+1815 {{s}^{102}} {{t}^{5}}+2289 {{s}^{98}} {{t}^{5}}\\
  +&2181 {{s}^{94}} {{t}^{5}}+1653 {{s}^{90}} {{t}^{5}}+2094 {{s}^{86}} {{t}^{5}}+1812 {{s}^{82}} {{t}^{5}}+1365 {{s}^{78}} {{t}^{5}}+1683 {{s}^{74}} {{t}^{5}}+1272 {{s}^{70}} {{t}^{5}}\\
  +&960 {{s}^{66}} {{t}^{5}}+1170 {{s}^{62}} {{t}^{5}}+783 {{s}^{58}} {{t}^{5}}+585 {{s}^{54}} {{t}^{5}}+672 {{s}^{50}} {{t}^{5}}+390 {{s}^{46}} {{t}^{5}}+297 {{s}^{42}} {{t}^{5}}\\
  +&342 {{s}^{38}} {{t}^{5}}+162 {{s}^{34}} {{t}^{5}}+105 {{s}^{30}} {{t}^{5}}+135 {{s}^{26}} {{t}^{5}}+54 {{s}^{22}} {{t}^{5}}+33 {{s}^{18}} {{t}^{5}}+48 {{s}^{14}} {{t}^{5}}+3 {{s}^{10}} {{t}^{5}}+3 {{s}^{6}} {{t}^{5}}\\
  +&18 {{s}^{2}} {{t}^{5}}+15 {{s}^{176}} {{t}^{4}}+15 {{s}^{168}}{{t}^{4}}+45 {{s}^{164}} {{t}^{4}}+15 {{s}^{160}} {{t}^{4}}+60 {{s}^{156}} {{t}^{4}}+96 {{s}^{152}} {{t}^{4}}+33 {{s}^{148}} {{t}^{4}}\\
  +&165 {{s}^{144}} {{t}^{4}}+183 {{s}^{140}} {{t}^{4}}+111 {{s}^{136}} {{t}^{4}}+294 {{s}^{132}} {{t}^{4}}+267 {{s}^{128}} {{t}^{4}}+231 {{s}^{124}} {{t}^{4}}+468 {{s}^{120}} {{t}^{4}}\\
  +&369 {{s}^{116}} {{t}^{4}}+360 {{s}^{112}} {{t}^{4}}+582 {{s}^{108}} {{t}^{4}}+456 {{s}^{104}} {{t}^{4}}+480 {{s}^{100}} {{t}^{4}}+660 {{s}^{96}} {{t}^{4}}+480 {{s}^{92}} {{t}^{4}}\\
  +&507 {{s}^{88}} {{t}^{4}}+642 {{s}^{84}} {{t}^{4}}+465 {{s}^{80}} {{t}^{4}}+471 {{s}^{76}} {{t}^{4}}+558 {{s}^{72}} {{t}^{4}}+351 {{s}^{68}} {{t}^{4}}+378 {{s}^{64}} {{t}^{4}}+426 {{s}^{60}} {{t}^{4}}\\
  +&234 {{s}^{56}} {{t}^{4}}+258 {{s}^{52}} {{t}^{4}}+267 {{s}^{48}} {{t}^{4}}+117 {{s}^{44}} {{t}^{4}}+168 {{s}^{40}} {{t}^{4}}+144 {{s}^{36}} {{t}^{4}}+42 {{s}^{32}} {{t}^{4}}+78 {{s}^{28}}{{t}^{4}}\\
  +&57 {{s}^{24}} {{t}^{4}}+15 {{s}^{20}} {{t}^{4}}+36 {{s}^{16}} {{t}^{4}}+12 {{s}^{12}} {{t}^{4}}+9 {{s}^{4}} {{t}^{4}}+6 {{t}^{4}}+10 {{s}^{142}} {{t}^{3}}+10 {{s}^{138}} {{t}^{3}}+28 {{s}^{130}} {{t}^{3}}\\
  +&18 {{s}^{126}} {{t}^{3}}+18 {{s}^{122}} {{t}^{3}}+64 {{s}^{118}} {{t}^{3}}+27 {{s}^{114}} {{t}^{3}}+45 {{s}^{110}} {{t}^{3}}+99 {{s}^{106}} {{t}^{3}}+44 {{s}^{102}} {{t}^{3}}+90 {{s}^{98}} {{t}^{3}}\\
  +&125 {{s}^{94}} {{t}^{3}}+52 {{s}^{90}} {{t}^{3}}+126 {{s}^{86}} {{t}^{3}}+133 {{s}^{82}} {{t}^{3}}+72 {{s}^{78}} {{t}^{3}}+144 {{s}^{74}} {{t}^{3}}+115 {{s}^{70}} {{t}^{3}}+73 {{s}^{66}} {{t}^{3}}\\
  +&135 {{s}^{62}} {{t}^{3}}+98 {{s}^{58}} {{t}^{3}}+65 {{s}^{54}} {{t}^{3}}+99 {{s}^{50}} {{t}^{3}}+63 {{s}^{46}} {{t}^{3}}+48 {{s}^{42}} {{t}^{3}}+72 {{s}^{38}} {{t}^{3}}+37 {{s}^{34}} {{t}^{3}}+\underline{20 {{s}^{30}} {{t}^{3}}}\\
  +&36 {{s}^{26}} {{t}^{3}}+19 {{s}^{22}} {{t}^{3}}+10 {{s}^{18}} {{t}^{3}}+18 {{s}^{14}} {{t}^{3}}+{{s}^{10}} {{t}^{3}}+{{s}^{6}} {{t}^{3}}+9 {{s}^{2}} {{t}^{3}}+6 {{s}^{104}} {{t}^{2}}+6 {{s}^{96}} {{t}^{2}}\\
  +&9 {{s}^{92}} {{t}^{2}}+15 {{s}^{84}} {{t}^{2}}+15 {{s}^{80}} {{t}^{2}}+3 {{s}^{76}} {{t}^{2}}+24 {{s}^{72}} {{t}^{2}}+12 {{s}^{68}} {{t}^{2}}+9 {{s}^{64}} {{t}^{2}}+33 {{s}^{60}} {{t}^{2}}+12 {{s}^{56}} {{t}^{2}}\\
  +&12 {{s}^{52}} {{t}^{2}}+27 {{s}^{48}} {{t}^{2}}+6 {{s}^{44}} {{t}^{2}}+18 {{s}^{40}} {{t}^{2}}+21 {{s}^{36}} {{t}^{2}}+3 {{s}^{32}} {{t}^{2}}+12 {{s}^{28}} {{t}^{2}}+12 {{s}^{24}} {{t}^{2}}+3 {{s}^{20}} {{t}^{2}}\\
  +&9 {{s}^{16}} {{t}^{2}}+3 {{s}^{12}} {{t}^{2}}+3 {{s}^{4}} {{t}^{2}}+3 {{t}^{2}}+3 {{s}^{58}} t+3 {{s}^{46}} t+3 {{s}^{38}} t+3 {{s}^{34}} t+3 {{s}^{26}} t+3 {{s}^{22}} t+3 {{s}^{14}} t\\
  +&3 {{s}^{2}} t+1
\end{align*}

\end{document}